\documentclass{amsart}
\usepackage{amsmath,amssymb,latexsym}
\usepackage{mathpazo}
\usepackage[mathscr]{eucal}

\usepackage{pb-diagram}
\usepackage[all]{xy}
\usepackage{verbatim}



\newcommand{\calH}{\mathcal{H}}
\newcommand{\id}{\operatorname{id}}
\newcommand{\Aop}{A^{\scriptscriptstyle{\rm op}}}

\newcommand{\der}{\operatorname{Der}}
\newcommand{\Hom}{\operatorname{Hom}}
\newcommand{\V}{\mathscr{V}}
\newcommand{\J}{\mathscr{J}}

\newcommand{\G}{\mathsf{G}}

\newcommand{\calC}{\mathcal{C}}
\newcommand{\frakg}{\mathfrak{g}}
\newcommand{\frakm}{\mathfrak{m}}

\newcommand{\elle}{{\scriptscriptstyle{L}}}            
\newcommand{\erre}{{\scriptscriptstyle{R}}}            %

%
%

\newcommand{\rmref}[1]{{\rm (}\ref{#1}{\rm )}}

\newcommand{\ga}{\alpha}
\newcommand{\gb}{\beta}

\newcommand{\gd}{\delta}
\newcommand{\gD}{\Delta}

\newcommand{\gl}{\lambda}
\newcommand{\gL}{\Lambda}

\newcommand{\gO}{\Omega}

\newcommand{\gs}{\sigma}

%
%
\newcommand{\eps}{\epsilon}

%
%

\newcommand{\C}{{\mathbb{C}}}

\newcommand{\R}{{\mathbb{R}}}

%
%

\newcommand{\End}{\operatorname{End}}

\newcommand{\Der}{{\operatorname{Der}}}

\newcommand{\pr}{{\rm pr} \,}

\newcommand{\Tor}{\operatorname{Tor}}
\newcommand{\Ext}{\operatorname{Ext}}
\newcommand{\Cotor}{\operatorname{Cotor}}

%
%

\newcommand{{\qqquad}}{{\quad\quad\quad}}

\newcommand{{\bull}}{{\scriptscriptstyle{\bullet}}}

\newcommand{\rcp}[1]{{#1}^{-1}} 

\newcommand{\Br}{{\rm Bar}}                                   
\newcommand{\Cobar}{\operatorname{Cobar}}                     

                     %
                     %
                  %
                  %
                     %
                     %
                    %
         %
         %
         %
        %
         %

                  %
  %
   %


\newcommand{\Ae}{{A^\mathrm{e}}}

\newcommand{{\op}}{{\scriptscriptstyle{\mathrm{op}}}}

\newcommand{{\Aopl}}{{A^{\rm op}_\pl}}

%
%
\newcommand{{\gog}}{{G \rightrightrrows G_0}}                
\newcommand{{\rra}}{\rightrightarrows}                        %

\newcommand{\cinfc}[1]{C_{\rm c}^\infty(#1)}                
\newcommand{\cinf}[1]{C^\infty(#1)}                         
\newcommand{\pl}{\partial}                                    
\newcommand{\qttr}[5]{ {}^{#2} \! {}_{#4} {#1}^{#3}_{#5} }    

%
%
\def\kasten#1{\mathop{\mkern0.5\thinmuskip
\vbox{\hrule
      \hbox{\vrule
            \hskip#1
            \vrule height#1 width 0pt
            \vrule}%
      \hrule}%
\mkern0.5\thinmuskip}}


\newcommand{\bx}{{\kasten{6pt}}}

\numberwithin{equation}{section}
\theoremstyle{plain}
        \newtheorem{theorem}{Theorem}[section]
        \newtheorem{lemma}[theorem]{Lemma}
        \newtheorem{proposition}[theorem]{Proposition}
        \newtheorem{corollary}[theorem]{Corollary}

\theoremstyle{definition}
        \newtheorem{definition}[theorem]{Definition}
        \newtheorem{remark}[theorem]{Remark}
        \newtheorem{example}[theorem]{Example}

\title{The cyclic theory of Hopf algebroids}
\author{Niels Kowalzig and Hessel Posthuma} 

\begin{document}
%
%

\begin{abstract}
We give a systematic description of the cyclic cohomology theory of Hopf algebroids in terms of its associated 
category of modules. 
Then we {introduce} a dual cyclic homology theory by applying cyclic duality
to the underlying cocyclic object. We derive general structure theorems for these theories in
the special cases of commutative and cocommutative Hopf algebroids. 
Finally, we compute the cyclic theory in examples 
associated to Lie-Rinehart algebras and \'etale groupoids.
\end{abstract}

\address{Niels Kowalzig: Utrecht University, 
Department of Mathematics,
P.O. Box 80.010,
3508TA Utrecht,
The Netherlands}

\email{N.Kowalzig@uva.nl}

\address{Hessel Posthuma: University of Amsterdam,
Korteweg-de Vries Institute for Mathematics, 
P.O. Box 94.248,
1090GE Amsterdam, 
The Netherlands}

\email{H.B.Posthuma@uva.nl}

\keywords{Hopf algebroids, Hopf-cyclic (co)homology, cyclic duality, Lie-Rinehart algebras, groupoids}
   
\subjclass[2010]{16T05, 16E40; 16T15, 19D55, 57T30, 58B34}

\maketitle

\tableofcontents


\section*{Introduction}

In geometry, groupoids are a joint generalisation of both spaces and
groups. 
As such they provide
a generalised symmetry concept 
that has found many applications in the theory of foliations, group actions, etc. 
In particular, the cohomology of the classifying spaces of (Lie) groupoids are the natural domain for 
the characteristic classes associated to such geometric structures.
Symmetries in noncommutative geometry, i.e.\ the noncommutative 
analogue of group actions, are encoded by the action or coaction of some
Hopf algebra on some algebra or coalgebra, which plays the r\^ole of a ``noncommutative space''.

{\em Hopf algebroids} are the noncommutative generalisation of groupoids
and as such provide a concept of generalised symmetries in noncommutative
geometry: 
they generalise Hopf algebras
to noncommutative base algebras. However, there exists more than one
definition. Originally introduced as cogroupoid objects in the category of
commutative algebras (see e.g.\ \cite{Rav:CCASHGOS}), the main
difficulty of defining Hopf algebroids stems from the fact that
the involved tensor category of bimodules is not symmetric, 
so that a straightforward generalisation of the corresponding notion 
for Hopf algebras does not make sense. 

Thinking of a Hopf algebra as a bialgebra equipped with an antipode,
the first step, the generalisation  
to so-called
{\em bialgebroids} (or {\em $\times_A$-bialgebras}) 
is unambiguous: 
this is a bialgebra object in the tensor category of bimodules over 
a (noncommutative) base algebra (cf.\ \cite{Swe:GOSA, Tak:GOAOAA, Lu:HAAQG, Schau:BONCRAASTFHB, Xu:QG, BrzMil:BBAD}). 


Approaches begin to differ when adding the antipode.  
The first general definition appeared in \cite{Lu:HAAQG}, 
where an auxiliary structure 
(a section of a certain projection map) was needed.
Motivated by cyclic cohomology, as we discuss below, 
a closely related notion of {\em para}-Hopf algebroid was introduced in \cite{KhaRan:PHAATCC}.

In this paper we will consider the alternative definition of
\cite{BoeSzl:HAWBAAIAD, Boe:AANOHA}, 
which, 
roughly speaking, consists of introducing \textit{two} bialgebroid
structures on a given algebra, 
called left and right bialgebroid (cf.\ \cite{KadSzl:BAODTEAD}), 
and views the antipode as mapping the left structure to the right
one. 
This setup avoids the somewhat {\em ad hoc} choice of a section 
and makes the definition
completely symmetric. 
Also we will show in \S\ref{Examples} 
that Lie groupoids and Lie algebroids (or rather Lie-Rinehart algebras)
lead to natural examples of such structures.
However, the immediate
generalisation of a Hopf algebra to a noncommutative base ring is, strictly speaking, rather 
given by a {\em $\times_A$-Hopf algebra} \cite{Schau:DADOQGHA}, while Hopf algebroids 
in the sense of \cite{BoeSzl:HAWBAAIAD, Boe:AANOHA} generalise Hopf algebras equipped 
with a character (i.e.\ with a possibly ``twisted'' antipode \cite{Cra:CCOHA, ConMos:CCAHAS}). 
For reasons to be explained in Remark \ref{hopf-times}, we will refer to $\times_A$-Hopf algebras as {\em left Hopf algebroids}.

Cyclic cohomology for Hopf algebras, {\em Hopf-cyclic cohomology},  
is the noncommutative analogue of Lie algebra homology (which is recovered
when applied to universal enveloping algebras of Lie algebras). 
It was launched in the work of Connes and Moscovici \cite{ConMos:HACCATTIT} on the transversal
index theorem for foliations and defined in general
in \cite{Cra:CCOHA} (cf.\ also \cite{ConMos:CCAHAS}). 
A universal framework suited to describe all examples of
cyclic (co)homology arising from Hopf algebras (up to cyclic duality)
was given in \cite{Kay:TUHCT}, based on a construction of para-(co)cyclic objects in
symmetric monoidal categories 
in 
terms 
of 
(co)monoids.

The generalisation of Hopf-cyclic cohomology to noncommutative base
rings, i.e.\ to Hopf algebroids, has been less
explored. For instance, the general machinery from \cite{Kay:TUHCT} does not apply
to this context (because the relevant category of modules is not
symmetric and in general not even braided).
It appeared for the first time 
in the particular example of the ``extended''
Hopf algebra governing the transversal geometry of foliations
in \cite{ConMos:DCCAHASITG}. In this context,
certain bialgebroids (in fact, left Hopf algebroids) carrying a
cocyclic structure arise naturally. Extending this construction to
general Hopf algebroids is not straightforward: 
for
example, the notion of Hopf algebroid in \cite{Lu:HAAQG} is 
not well-suited to the problem. This led in \cite{KhaRan:PHAATCC} to the definition of
para-Hopf algebroids, in which the antipode of
\cite{Lu:HAAQG} is replaced by a {\em para}-antipode. 
Its axioms are principally designed for the cocyclic structure to be 
well-defined adapting the Hopf algebra case. 
However, the resulting para-antipode axioms
appear quite complicated and do not resemble the original symmetric Hopf algebra axioms.
In particular, {\em guessing} an
antipode (and hence the cyclic operator) in concrete examples remains
intricate. 

In \cite{BoeSte:CCOBAAVC} a general cyclic theory for bialgebroids and
{\em left} Hopf algebroids (in terms of so-called (co)monads) is developed that
works in an arbitrary category, and hence embraces the construction in
\cite{Kay:TUHCT} for symmetric monoidal categories.

In this paper we shall show that the cyclic cohomology theory 
for Hopf algebroids in the sense of
\cite{BoeSzl:HAWBAAIAD, Boe:AANOHA} 
is actually naturally defined 
and explain how it fits into the monoidal category of modules and the cyclic cohomology of coalgebras, generalising the corresponding Hopf 
algebra approach from \cite{Cra:CCOHA, ConMos:CCAHAS}.

Besides the cyclic cohomology, we develop a dual cyclic
homology theory 
by, roughly speaking, applying cyclic duality to the underlying cocyclic object. 
This generalises the dual theory for Hopf algebras
\cite{Cra:CCACCFF,KhaRan:ANCMFHA} 
and is more related to a certain category of comodules 
(over one of the underlying bialgebroid structures). 
It should be stressed that this homology theory is not 
simply the $\Hom$-dual of the cohomology theory mentioned above; 
it can give interesting results even when the cyclic cohomology is
trivial, cf.\ \S \ref{etale} for an example. 
Generally, in each of the classes
of examples we consider, one of the two cyclic theories
does not furnish new information compared to the respective
Hochschild 
theory, whereas the other one does. However, these examples 
are in some sense ``extremal''  with
respect to {\em primitive} and {\em (weakly) grouplike} elements---we
do not pursue this any further here.
\\

\noindent {\bf Outline.}
This paper is set up as follows: in \S \ref{hopfalgebroid} we review
the 
definition of a Hopf algebroid as in \cite{Boe:AANOHA, BoeSzl:HAWBAAIAD}
and give a brief description of the associated monoidal categories of
modules and comodules. 
We then give a systematic 
derivation of the cyclic cohomology complexes using coinvariant
localisation in the category 
of modules over the Hopf algebroid 
(\S\ref{basic} and \S\ref{coinvariants}). The dual homology is
constructed in \S\ref{dual} 
by applying the notion of duality in Connes' cyclic category, after
the cochain spaces have been mapped isomorphically
into the category of certain comodules by
means of a
{\em Hopf-Galois map} (cf.\ \cite{Schau:DADOQGHA}) associated to the Hopf algebroid.

The remainder of section \ref{cyclictheory} is devoted to some
ramifications of the 
theory. We identify the Hochschild theory 
as certain derived functors (\S\ref{schondreiuhr}) and prove structure
theorems which allow to express the cyclic theory of commutative and
cocommutative Hopf algebroid in terms of their respective Hochschild
theory (\S\ref{baldvier}). 
This generalises a similar approach for Hopf algebras \cite{KhaRan:ANCMFHA}.

Section \ref{Examples} is devoted to examples: we discuss Hopf
algebroids 
arising from \'etale groupoids, Lie-Rinehart algebras 
(or Lie algebroids), and jet spaces of Lie-Rinehart algebras. 
In all these examples, the left bialgebroid structure 
has been described before in the literature, and we add both the 
right structure and the antipode. 
For Lie-Rinehart algebras this leads to the following remarkable
conclusion: 
the universal enveloping algebra of a Lie-Rinehart algebra 
has a canonical left Hopf algebroid structure 
(in particular it is a left bialgebroid), and a full Hopf algebroid
structure 
depends on the choice of a certain flat right connection 
(cf.\ \cite{Hue:LRAGAABVA}) 
on the base algebra. 
However, its dual jet space does carry a Hopf algebroid 
structure, free of choices.

Finally, we compute the cyclic homology and cohomology in 
all 
these examples and find that it generalises well-known 
Lie groupoid and Lie algebroid resp.\ Lie-Rinehart homology and
cohomology theories. In particular, it generalises corresponding results in Hopf algebra
theory \cite{ConMos:HACCATTIT, Cra:CCACCFF, Cra:CCOHA, KhaRan:ANCMFHA}.
\\

\noindent {\bf Acknowledgements.} 
We would like to thank Andrew Baker, Gabriella B\"ohm, and Marius Crainic for stimulating discussions and comments.
This research was supported by NWO through the GQT cluster (N.K.) and a Veni grant (H.P.).

\section{Hopf Algebroids}
\label{hopfalgebroid}
\subsection{Preliminaries}
In this paper, the term ``ring'' always means ``unital and associative
ring'', and we fix a commutative ground ring $k$. Throughout the
paper, we
work in the symmetric monoidal category of $k$-modules. 
For a $k$-algebra $A$, its opposite is denoted by $\Aop$, the enveloping
algebra by $\Ae:=A\otimes_k \Aop$, and the category of left
$A$-modules by $\mathsf{Mod}(A)$.
The category of $\Ae$-modules, that is, $(A,A)$-bimodules with symmetric
action of $k$, is monoidal by means of the tensor 
product $\otimes_A$ over $A$. 
An {\em $A$-algebra} is a monoid in this category, i.e.\ 
an $(A,A)$-bimodule $U$ equipped with $(A,A)$-bimodule
morphisms $\mu:U\otimes_AU\to A$ and $\eta:A\to U$ satisfying the
usual associativity and unitality axioms. 
Likewise, the notion of an {\em $A$-coalgebra} is defined as a comonoid in
the category of $\Ae$-modules. These notions also appear under the
name {\em $A$-ring} and {\em $A$-coring} in the literature, 
see e.g.\ \cite{Boe:HA, BrzWis:CAC}.

\subsection{Bialgebroids} (cf.\ \cite{Tak:GOAOAA})
Bialgebroids are a generalisation of bialgebras. An important subtlety
 is that the algebra and coalgebra structure are defined in 
different monoidal categories. Let $A$ and $\calH$ be (unital) $k$-algebras, and
suppose we have homomorphisms $s\!\!:\!\!A\!\!\to\!\!\calH$ and $t\!\!:\!\!\Aop\!\!\to\!\!\calH$ whose 
images commute in $\calH$: 
this 
structure is equivalent to
the structure of an $\Ae$-algebra on $\calH$. 
Such objects are also called {\em $(s,t)$-rings} over $A$, whereas $s$
and $t$ are referred to as {\em source} and {\em target} maps.
Multiplication in $\calH$ from the left equips $\calH$ with the following $(A,A)$-bimodule structure
\begin{equation}
\label{bimod-lmod}
a_1\cdot h\cdot a_2:=s(a_1)t(a_2)h, \qqquad a_1,a_2\in A, \ h\in\calH.
\end{equation}
With respect to this bimodule structure we define the tensor product $\otimes_A$. 
Inside $\calH\otimes_A\calH$, there is a subspace \label{taki} called the 
{\em Takeuchi product}:
\[
\calH \times_A \calH :=  \{\textstyle\sum_i h_i \otimes_A h'_i \in \calH \otimes_A \calH \mid \sum_i h_i t_l(a)
\otimes h'_i = \sum_i h_i \otimes h'_i s_l(a), \ \forall a \in A\}.
\]
This is a unital algebra via factorwise multiplication and even an
$(s,t)$-ring again.
\begin{definition}
\label{left-bialg}
Let $A_l$ be a $k$-algebra.
A {\em left bialgebroid} over $A_l$ or {\em $A_l$-bialgebroid} is an $(s_l,t_l)$-ring $\calH_l$
equipped with the structure of an $A_l$-coalgebra
$(\Delta_l,\epsilon_l)$ 
with respect to the $(A_l,A_l)$-bimodule structure \rmref{bimod-lmod}, subject to the following conditions:
\begin{enumerate}
\item
the (left) coproduct $\Delta_l:\calH_l\to\calH_l\otimes_{A_l}\calH_l$
  maps into $\calH_l\times_{A_l} \calH_l$ 
and defines a morphism $\Delta_l:\calH_l\to\calH_l \times_{A_l} \calH_l$ of
  unital $k$-algebras;
\item
the (left) counit has the property
\begin{equation*}
\epsilon_l(hh') = \epsilon_l(h s_l(\epsilon_l h')) =   \epsilon_l(h
t_l(\epsilon_l h')), \qquad\qquad \mbox{for all} \ h,h'\in\calH_l.
\end{equation*}
\end{enumerate} 
\end{definition}
\noindent We shall indicate such a left bialgebroid by
$(\calH_l,A_l, s_l,t_l,\Delta_l,\epsilon_l)$, or simply 
by $\calH_l$.

Given any $(s, t)$-ring $\calH$, besides the $(A,A)$-bimodule structure \eqref{bimod-lmod}, 
one could choose the one coming from the {\em right} action of $\calH$ on 
itself:
\begin{equation}
\label{bimod-rmod}
a_1\cdot h \cdot a_2:=h t(a_1) s(a_2), \qqquad a_1,a_2\in A, \ h\in\calH.
\end{equation}
Proceeding analogously as above, this leads to the notion of a \textit{right}
bialgebroid $(\calH_r,A_r,s_r,t_r,\Delta_r,\epsilon_r)$, where the underlying algebra is 
denoted by $A_r$. We shall not write out the details, but rather refer
to \cite{KadSzl:BAODTEAD,Boe:HA}. For example, the corresponding {\em right} counit
$\eps_r:\calH_r\to A_r$ satisfies in this case 
\begin{equation*}
\eps_r(hh')=\eps_r(s_r(\eps_rh)h')=\eps_r(t_r(\eps_r h)h'), \qquad
\qquad \mbox{for all} \ h, h' \in \calH_r.
\end{equation*}
We will use Sweedler notation with subscripts
$\Delta_l(h)=h_{(1)}\otimes h_{(2)}$ for left coproducts, whereas
right coproducts
 are indicated
by superscripts: $\Delta_r(h)=h^{(1)}\otimes h^{(2)}$.

\subsection{Hopf algebroids}
A Hopf algebroid is now, roughly speaking, an algebra equipped with a left and a right
bialgebroid structure together with an antipode mapping from the left bialgebroid to the right.
This idea leads to the following definition: 
\begin{definition}[cf.\ \cite{BoeSzl:HAWBAAIAD}]
\label{def-hopf-algbd}
\label{HAlgd}
A {\em Hopf algebroid} is given by a triple $(\calH_l,\calH_r,S)$, where 
$\calH_l = (\calH_l,A_l,s_l,t_l,\Delta_l,\epsilon_l)$ is a left
$A_l$-bialgebroid and  $\calH_r = 
(\calH_r,A_r,s_r,t_r,\Delta_r,\epsilon_r)$ is a right $A_r$-bialgebroid on the
same $k$-algebra $\calH$, and $S:\calH\to\calH$ is a $k$-module
 map subject to the conditions:
\begin{enumerate}
\item 
the images of $s_l$ and $t_r$, as well as $t_l$ and $s_r$, coincide:
\begin{equation}
\label{Subr}
s_l \epsilon_l t_r = t_r, \quad t_l \epsilon_l s_rr = s_r, \quad s_r \epsilon_r
t_l = t_l,\quad t_r \epsilon_r s_l = s_l;
\end{equation}
\item
{\em twisted coassociativity} holds:
\begin{equation}
\label{TwCoAssoc}
(\Delta_l \otimes \id_\calH) \Delta_r = (\id_\calH \otimes \Delta_r) \Delta_l
\quad \mbox{and} \quad 
(\Delta_r \otimes \id_\calH) \Delta_l = (\id_\calH \otimes \Delta_l) \Delta_r;
\end{equation}
\item
for all $a_1\in A_l$, $a_2\in A_r$ and $h\in\calH$ we have
\[
S(t_l(a_1)ht_r(a_2))=s_r(a_2)S(h)s_l(a_1);
\]
\item
the {\em antipode axioms} are fulfilled:
\begin{equation}
\label{TwAp}
\mu_\calH(S \otimes \id_\calH)\Delta_l = s_r   \epsilon_r 
\quad \mbox{and} \quad
\mu_\calH(\id_\calH \otimes S)\Delta_r = s_l  \epsilon_l,
\end{equation}
\end{enumerate}
where $\mu_\calH$ denotes multiplication in $\calH$.
\end{definition}
Although we do not need this for all constructions in this paper, we
shall from now on assume 
that the antipode $S$ is invertible.
\begin{remark} The axioms above have the following implications (cf. \cite{BoeSzl:HAWBAAIAD, Boe:HA}):
\begin{enumerate}
\item 
Applying $\epsilon_r$ to the first two and $\epsilon_l$ to the second pair of
identities in \eqref{Subr}, one obtains that $A_l$ and $A_r$ are
anti-isomorphic as $k$-algebras, i.e.,
\begin{equation}
\label{ABIso}
\begin{array}{lcllcl}
\phi:= \epsilon_r \circ s_l: \Aop_l &\stackrel{\cong}{\longrightarrow}& A_r, \quad &  
\phi^{-1}:= \epsilon_l\circ t_r: A_r &\stackrel{\cong}{\longrightarrow}& \Aop_l, \\
\theta:= \epsilon_r\circ t_l: A_l &\stackrel{\cong}{\longrightarrow}& \Aop_r, \quad   &
\theta^{-1}:= \epsilon_l \circ s_r: \Aop_r &\stackrel{\cong}{\longrightarrow}& A_l.
\end{array}
\end{equation}
When $S^2=\id$, i.e., when the antipode is involutive, it follows from \eqref{SHomId} below that $\theta=\phi$,
so there is a canonical way to identify $A^\op_l$ with $A_r$.
\item
The antipode is an anti-algebra and anti-coalgebra morphism (between different coalgebras) and
satisfies 
\begin{equation}
\label{bleistift}
{\rm tw}\circ (S\otimes S) \gD_l= \gD_r S,  \qquad {\rm tw} \circ (S\otimes S) \gD_r =
\gD_l S, 
\end{equation}
where ${\rm tw}:\calH\otimes_k\calH\to\calH\otimes_k\calH$ is the tensor flip permuting the two factors 
(one can check that the maps above do respect the $(A_l,A_l)$-, resp.\ $(A_r,A_r)$-bimodule structure).
Likewise, one has for the inverse:
\[
{\rm tw}\circ (S^{-1}\!\otimes S^{-1})  \gD_l= \gD_r  S^{-1}, \qquad {\rm tw}\circ (S^{-1}\!\otimes S^{-1}) \gD_r
=\gD_l S^{-1}.
\]
\item
We have the identities 
\begin{equation}
\label{SHomId}
\begin{array}{rclrclrclrcl}
s_r \eps_r s_l &\!\!\!\! =&\!\!\!\!  S s_l \ &  s_l  \eps_l s_r &\!\!\!\! =&\!\!\!\!  S s_r  \ &  s_r \eps_r
\, t_l &\!\!\!\! =&\!\!\!\!  S^{-1} s_l   \ &  s_l  \eps_l \, t_r &\!\!\!\! =&\!\!\!\!  S^{-1} s_r
\\
t_r \eps_r s_l &\!\!\!\! =&\!\!\!\!  S t_l \ & t_l  \eps_l s_r &\!\!\!\! =&\!\!\!\!  S t_r  \ &  t_r \eps_r
\, t_l &\!\!\!\! =&\!\!\!\!  S^{-1} t_l   \ & t_l  \eps_l \, t_r &\!\!\!\! =&\!\!\!\!  S^{-1} t_r
\\
\eps_r s_l \eps_l &\!\!\!\! =&\!\!\!\!  \eps_r S \ & \eps_l s_r \eps_r &\!\!\!\! =&\!\!\!\!  \eps_l S \ & \eps_r \, t_l
\eps_l &\!\!\!\! =&\!\!\!\!  \eps_r S^{-1} \ & \eps_l \, t_r \eps_r &\!\!\!\! =&\!\!\!\!  \eps_l S^{-1}.
\\
\end{array}
\end{equation}
\end{enumerate}
\end{remark}
\noindent We now collect a list of basic identities involving the
antipode, the multiplication and the (left or right) comultiplication that we need later in explicit computations. 
All can be verified directly from the axioms.
\begin{lemma}
\label{SCoUConv}
For a Hopf algebroid $\calH$ with invertible antipode, the following
identities hold true:
\begin{equation*}
\label{SRel}
\begin{array}{rclrcl}
\mu_\calH(S \otimes s_l \eps_l)\gD_l &=& S,
\quad& 
\mu_\calH(s_r \eps_r \otimes S)\gD_r &=& S, \\
\mu_{\calH^{\op}}(S^2 \otimes t_l \eps_l S^2)\gD_l &=& S^2, \quad&  
\mu_{\calH^{\op}} (t_r \eps_r S^2 \otimes S^2)\gD_r &=& S^2, \\
 \mu_{\calH^{\op}}(S^2 \otimes S)\gD_l &=& t_r \eps_r S^2,  \quad&
\mu_{\calH^{\op}} (S \otimes S^2)\gD_r &=& t_l \eps_l S^2,\\
&&&&& \\
\mu_{\calH^{\op}}(\id_\calH \otimes S^{-1})\gD_l &=& t_r \eps_r, 
\quad&  
\mu_{\calH^{\op}}(S^{-1} \otimes \id_\calH)\gD_r &=& t_l \eps_l, 
\\
\mu_{\calH^{\op}}(t_l \eps_l \otimes S^{-1})\gD_l &=& S^{-1},
\quad&  
\mu_{\calH^{\op}}(S^{-1} \otimes t_r \eps_r)\gD_r &=& S^{-1}, 
\\
 \mu_\calH(S^{-1} \otimes S^{-2})\gD_l &=& s_r \eps_r S^{-2},  \quad&
 \mu_\calH(S^{-2} \otimes S^{-1})\gD_r &=& s_l \eps_l S^{-2}.
\end{array}
\end{equation*}
Here $\mu_{\calH^{\op}}$ is the multiplication in the opposite of $\calH$.
\end{lemma}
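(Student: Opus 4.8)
The plan is to treat the list as twelve convolution-type identities and to organise the proof by the mechanism producing each line, rather than expanding all of them by hand. The toolkit I would set up first consists of: the two antipode axioms \eqref{TwAp}; the counit axioms of the two bialgebroids, which in Sweedler notation read $s_l(\epsilon_l h_{(1)})h_{(2)} = h = t_l(\epsilon_l h_{(2)})h_{(1)}$ and $h^{(2)}t_r(\epsilon_r h^{(1)}) = h = h^{(1)}s_r(\epsilon_r h^{(2)})$; the twisted antipode relation of Definition \ref{HAlgd}\,(iii); the anti-algebra and anti-coalgebra property of $S^{\pm1}$ from \eqref{bleistift}; and the bookkeeping relations \eqref{SHomId} together with the base identities \eqref{Subr}. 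A preliminary observation I would record is that iterating the two equalities in \eqref{bleistift} makes the two tensor flips cancel, so that $S^2$, and hence $S^{-2}$, is a genuine coalgebra endomorphism for both coproducts: $\Delta_l S^{\pm2} = (S^{\pm2}\otimes S^{\pm2})\Delta_l$ and the same for $\Delta_r$.

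The first pair I would dispatch directly. Specialising Definition \ref{HAlgd}\,(iii) to $S(t_l(a)h) = S(h)s_l(a)$ and $S(h\,t_r(a)) = s_r(a)S(h)$, the left identity $\mu_\calH(S\otimes s_l\epsilon_l)\Delta_l = S$ becomes $S(h_{(1)})s_l(\epsilon_l h_{(2)}) = S\!\left(t_l(\epsilon_l h_{(2)})h_{(1)}\right)$, which equals $S(h)$ by the counit axiom; the right identity follows symmetrically. The two $S^2$-lines and the two $S^{-2}$-lines I would then obtain by pullback: the $S^2$-lines are exactly the counit axioms evaluated at $S^2(h)$ and rewritten through $\Delta_l S^2 = (S^2\otimes S^2)\Delta_l$ (and its $\Delta_r$-analogue), while the $S^{-2}$-lines are the antipode axioms \eqref{TwAp} evaluated at $S^{-2}(h)$ and rewritten through $\Delta_l S^{-2} = (S^{-2}\otimes S^{-2})\Delta_l$. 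No use of \eqref{SHomId} is needed for these four.

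The remaining six identities, all featuring the opposite multiplication $\mu_{\calH^{\op}}$, I would produce by applying $S$ or $S^{-1}$ to a core identity and invoking anti-multiplicativity $S(xy)=S(y)S(x)$: this simultaneously turns $\mu_\calH$ into $\mu_{\calH^{\op}}$ and raises the power of $S$ on each leg. Concretely, applying $S$ to \eqref{TwAp} gives $\mu_{\calH^{\op}}(S^2\otimes S)\Delta_l = Ss_r\epsilon_r$ and $\mu_{\calH^{\op}}(S\otimes S^2)\Delta_r = Ss_l\epsilon_l$; applying $S^{-1}$ gives $\mu_{\calH^{\op}}(\id_\calH\otimes S^{-1})\Delta_l = S^{-1}s_r\epsilon_r$ and $\mu_{\calH^{\op}}(S^{-1}\otimes\id_\calH)\Delta_r = S^{-1}s_l\epsilon_l$; and applying $S^{-1}$ to the appropriate counit axiom produces the last pair, whose surviving $s_\bullet\epsilon_\bullet$-leg is $S^{-1}s_l\epsilon_l$, resp.\ $S^{-1}s_r\epsilon_r$. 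It then remains to normalise each right-hand side to the printed form using \eqref{SHomId} and \eqref{Subr}: the decisive reductions are $S^{-1}s_r = s_l\epsilon_l t_r = t_r$ and $S^{-1}s_l\epsilon_l = s_r\epsilon_r t_l\epsilon_l = t_l\epsilon_l$ (with their mirrors), together with $Ss_r\epsilon_r = s_l\epsilon_l S = t_r\epsilon_r S^2$ and $Ss_l\epsilon_l = t_l\epsilon_l S^2$, each of which collapses once the relevant entries of \eqref{SHomId} are fed into one of the four equalities of \eqref{Subr}.

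The conceptual content is thus light, and the hard part is purely the bookkeeping, concentrated in two places. First, when distributing $S^{\pm1}$ across a product one must track the order reversal correctly: this is precisely what converts $\mu_\calH$ into $\mu_{\calH^{\op}}$ and, via \eqref{bleistift}, interchanges the two legs of the coproduct, so it is easy to pair a left coproduct with a right one or to lose a tensor flip. Second, the right-hand-side normalisations chain three or four entries of \eqref{SHomId} before a relation from \eqref{Subr} finally removes the spurious $s_\bullet\epsilon_\bullet$-factor, and selecting those entries in the correct order is the only genuinely fiddly step. Throughout, invertibility of $S$ is used essentially, both to form the $S^{-1}$- and $S^{-2}$-transports and to supply the inverse version of \eqref{bleistift} quoted there.
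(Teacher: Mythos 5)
Your proposal is correct: every one of the twelve identities checks out, including the normalisations $Ss_r\eps_r = s_l\eps_l S = t_r\eps_r S^2$, $Ss_l\eps_l = t_l\eps_l S^2$, and $S^{-1}s_r = t_r$, $S^{-1}s_l = t_l$ obtained by chaining \eqref{SHomId} with \eqref{Subr}. This matches the paper's treatment, which offers no written proof beyond the remark that the identities ``can be verified directly from the axioms''; your organisation of that verification (direct use of axiom (iii) plus counits for the first pair, transport along $S^{\pm 2}$ for the second and last pairs, and application of the anti-homomorphisms $S^{\pm 1}$ to the antipode and counit axioms for the middle six) is a sound and complete way to carry it out.
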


\subsection{Modules and comodules}
\label{mod-comod}
Let $\calH=(\calH_l,\calH_r,S)$ be a Hopf algebroid. 
In this section we discuss several categories of modules and comodules attached to 
$\calH$, together with some basic properties.
\subsubsection{Left modules}(cf.\ \cite{Schau:BONCRAASTFHB})
A left module over $\calH$ or left $\calH$-module $M$ is simply a left module
over the underlying $k$-algebra $\calH$. We denote the structure map
usually by $(h,m) \mapsto h \cdot m$ and the category of
left $\calH$-modules by $\mathsf{Mod}(\calH)$.
The left bialgebroid structure $\calH_l$ induces the following structure on this category:
first, using the left $A^{\rm e}_l$-algebra structure, 
any module $M\in\mathsf{Mod}(\calH)$ carries an underlying $(A_l,A_l)$-bimodule structure 
by
\begin{equation}
\label{verbiegen}
a_1\cdot m\cdot a_2:=s_l(a_1) \cdot t_l(a_2) \cdot m,
\end{equation}
for all $a_1,a_2\in A_l$ and $m\in M$. This defines a forgetful functor 
\[
\mathsf{Mod}(\calH)\to \mathsf{Mod}(A^{\rm e}_l).
\]
Second, the left coproduct defines a monoidal structure on
$\mathsf{Mod}(\calH)$ by $(M,N)\mapsto M\otimes_{A_l} N$, 
equipped with the 
$\calH$-module structure 
\begin{equation}
\label{mod-tensor}
h\cdot (m\otimes n):=h_{(1)} \cdot m\otimes h_{(2)} \cdot n, \qqquad h\in\calH,~m\in M, \ n\in N.
\end{equation}
The fundamental
theorem of Schauenburg \cite[Thm.\ 5.1]{Schau:BONCRAASTFHB} 
states that conversely such tensor structure on 
$\mathsf{Mod}(\calH)$ is equivalent to a left bialgebroid structure on $\calH$.
The unit object in $\mathsf{Mod}(\calH)$ is given by $A_l$ with left $\calH$-action defined 
by 
$\calH \to \End_k (A_l), h \mapsto \{ a \mapsto \eps_l(hs_l(a))\}$. 
With this, $\mathsf{Mod}(\calH)$ is a monoidal tensor category.

\subsubsection{Right modules}
The category of \textit{right} $\calH$-modules has a similar tensor
\mbox{structure} by exploring the 
\textit{right} bialgebroid structure. 
Its unit object is given by $A_r$ equipped with a right $\calH$-module
structure induced by
the right counit:
$\calH \to \End_k (A_r), \ h \mapsto \{ a \mapsto \eps_r(s_r(a)h)\}$. 
We write $\mathsf{Mod}(\calH^{\op})$ for this tensor category. The antipode defines a functor from 
$\mathsf{Mod}(\calH)$ to $\mathsf{Mod}(\calH^{\op})$ because it is an
anti-homomorphism. When it is involutive, 
this is obviously an 
equivalence of categories.
\subsubsection{Coinvariant localisation}
There is an important functor 
$(-)_{\scriptscriptstyle{\rm
    coinv}}:\mathsf{Mod}(\calH)\to\mathsf{Mod}(k)$ from the
category of left $\calH$-modules into the
category of $k$-modules
called {\em coinvariant localisation}, defined by
\[ 
M_{\scriptscriptstyle{\rm coinv}}:=A_r\otimes_\calH M,
\]
for $M\in\mathsf{Mod}(\calH)$. Equivalently,
$M_{\scriptscriptstyle{\rm coinv}}\cong M\slash I_r$, with $I_r$ the
$k$-module of coinvariants given by
\[
I_r:=\mbox{span}_k\{\eps_r(h)\cdot m   -h\cdot m,~h\in\calH,~m\in M\},
\]
where the $(A_r,A_r)$-bimodule structure on $M$ is defined by \rmref{verbiegen} via $\theta^{-1}\!:\!A_r\!\to \!A_l^{\op}$.
\begin{lemma} [Partial Integration]
\label{PartIntMod} 
Let $\calH$ be a Hopf algebroid as before, and $M, N \in \mathsf{Mod}(\calH)$. 
In $M\otimes_{A_l} N$  one has the identity
\[
h\cdot m \otimes n \equiv m \otimes (Sh)\cdot n \qquad \mbox{\rm mod}~I_r,
\]
for all $m \in M, n \in N $ and $h \in \calH$.
\end{lemma}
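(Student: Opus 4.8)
The plan is to reduce the statement to the coinvariance congruence already carried by the tensor module $M\otimes_{A_l}N$, and then to manufacture the antipode by a single well-chosen application of it. First I would record what working ``mod $I_r$'' buys us. Since $M\otimes_{A_l}N$ is the left $\calH$-module with diagonal action $g\cdot(x\otimes y)=g_{(1)}\cdot x\otimes g_{(2)}\cdot y$ given by \eqref{mod-tensor}, and $I_r$ is \emph{precisely} its module of coinvariants, we have for every $g\in\calH$ the congruence $g_{(1)}\cdot x\otimes g_{(2)}\cdot y\equiv\epsilon_r(g)\cdot(x\otimes y)\pmod{I_r}$. Moreover, using \eqref{Subr}, which identifies $\operatorname{im}s_r$ with $\operatorname{im}t_l$, the $A_r$-action appearing on the right is nothing but the $\calH$-action of $s_r(\epsilon_r(g))$. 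This identification is the bridge that lets me feed the antipode axioms \eqref{TwAp}, stated through $s_r\epsilon_r$ and $s_l\epsilon_l$, into the computation.

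The key move is to apply this congruence not to $m\otimes n$ but to the element $m\otimes S(h^{(2)})\cdot n$, taking the acting element to be $g=h^{(1)}$. Twisted coassociativity \eqref{TwCoAssoc}, in the form $(\Delta_l\otimes\id)\Delta_r=(\id\otimes\Delta_r)\Delta_l$, then rewrites $(h^{(1)})_{(1)}\otimes(h^{(1)})_{(2)}\otimes h^{(2)}$ as $h_{(1)}\otimes(h_{(2)})^{(1)}\otimes(h_{(2)})^{(2)}$. On the left-hand side of the congruence this regrouping turns the second tensor factor into $(h_{(2)})^{(1)}S((h_{(2)})^{(2)})\cdot n$, which collapses to $s_l(\epsilon_l(h_{(2)}))\cdot n$ by the second antipode axiom in \eqref{TwAp}; moving this $s_l$-term across the balanced tensor product over $A_l$ and applying the left counit then yields exactly $h\cdot m\otimes n$. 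On the right-hand side I would use that $s_r(\epsilon_r(h^{(1)}))$ lies in $\operatorname{im}t_l$, so its left coproduct is $1\otimes s_r(\epsilon_r(h^{(1)}))$; hence the $A_r$-action lands entirely on the second factor as $s_r(\epsilon_r(h^{(1)}))S(h^{(2)})\cdot n$, and by the identity $\mu_\calH(s_r\epsilon_r\otimes S)\Delta_r=S$ of Lemma \ref{SCoUConv} this is precisely $(Sh)\cdot n$. Reading the congruence off then gives $h\cdot m\otimes n\equiv m\otimes(Sh)\cdot n$.

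The hard part is none of the individual identities, which are all in hand, but the left/right bookkeeping that makes them fit together: one must consistently track whether each structure map $s_l,t_l,s_r,t_r$ that appears sits on the correct side of $\otimes_{A_l}$, so that it is a balanced, and hence movable, action rather than an immovable internal factor, and one must check that each intermediate coproduct genuinely lands in the relevant Takeuchi product so that every manipulation is well defined over $A_l$. In particular, verifying that the single substitution $g=h^{(1)}$, $y=S(h^{(2)})\cdot n$ really does make both sides collapse, rather than leaving a residual source or target factor, is where the twisted coassociativity \eqref{TwCoAssoc}, the image identifications \eqref{Subr}, and the counit relations do the real work; this is the step I would write out most carefully.
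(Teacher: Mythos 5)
Your proof is correct and is essentially the paper's own argument: the paper likewise writes $Sh = s_r\eps_r(h^{(1)})\,S(h^{(2)})$ via Lemma \ref{SCoUConv}, trades the $A_r$-action of $\eps_r(h^{(1)})$ for the $\calH$-action of $h^{(1)}$ modulo $I_r$, and then collapses $h^{(1)}\cdot\big(m\otimes S(h^{(2)})\cdot n\big)$ to $h\cdot m\otimes n$ using twisted coassociativity \rmref{TwCoAssoc}, the antipode axiom \rmref{TwAp}, and the left-bialgebroid counit identity $t_l\eps_l(h_{(2)})h_{(1)}=h$. The only difference is presentational: you evaluate both sides of a single coinvariance congruence at the pivot element $m\otimes S(h^{(2)})\cdot n$, whereas the paper runs the same identities as one chain of (in)equalities from $m\otimes(Sh)\cdot n$ to $h\cdot m\otimes n$.
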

\begin{proof}
First observe that the induced $(A_r, A_r)$-bimodule structure on
$M \otimes_{A_l} N$ reads 
$$
a_1\cdot(m\otimes n)\cdot a_2:=S(s_r a_2) \cdot m\otimes s_r(a_1) \cdot n,
$$
for $a_1,a_2\in A_r$ and $m\in M$, $n\in N$.
Then one has
\begin{equation*}
\begin{array}{rcll}
m \otimes (Sh) \cdot n &\!\!\!\!=&\!\!\!\! m \otimes \big(s_r \eps_r(h^{(1)}) Sh^{(2)}\big)
 \cdot n & \mbox{by Lemma \ref{SCoUConv},} \\
 &\!\!\!\!=&\!\!\!\!   \eps_r(h^{(1)})\cdot\big( m \otimes (Sh^{(2)}) \cdot n\big) & \\
&\!\!\!\!\equiv&\!\!\!\! h^{(1)} \cdot \big(m \otimes (S h^{(2)}) \cdot n\big) 
& \mbox{mod}~I_r \\
&\!\!\!\!=&\!\!\!\!  h_{(1)} \cdot m \otimes  h^{(1)}_{(2)} \cdot 
\big(Sh^{(2)}_{(2)} \cdot n\big) & \mbox{by twisted coassociativity \rmref{TwCoAssoc},} \\
&\!\!\!\!=&\!\!\!\!  h_{(1)} \cdot m \otimes \eps_l ( h_{(2)})\cdot n & \mbox{by \rmref{TwAp},} \\
&\!\!\!\!=&\!\!\!\! \big(t_l \eps_l
(h_{(2)})h_{(1)}\big)\cdot m \otimes_A n  & \\
&\!\!\!\!=&\!\!\!\! h\cdot m \otimes_A n, &
\end{array}
\end{equation*}
where the last identity is one of the comonoid identities of a
left bialgebroid.
\end{proof}
Considering $\calH$ as a module over itself with respect to
left multiplication, we get
\begin{proposition}
\label{ProjAntipCor}
For $M\in\mathsf{Mod}(\calH)$, there is a canonical isomorphism of $k$-modules
$$
(\calH \otimes_{A_l} M)_{\scriptscriptstyle{\rm coinv}}\stackrel{ \cong}{\longrightarrow} M,
$$
given by 
\begin{equation}
\label{almhof}
h\otimes m \longmapsto (Sh)\cdot m.
\end{equation}
\end{proposition}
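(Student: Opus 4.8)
The plan is to realise the asserted map as the descent to coinvariants of the $k$-linear map
\[
\Phi:\calH\otimes_{A_l}M\to M,\qquad h\otimes m\mapsto (Sh)\cdot m,
\]
and to produce its inverse in the form $m\mapsto[1_\calH\otimes m]$. The engine of the whole argument is the Partial Integration formula of Lemma \ref{PartIntMod}: taking its first tensor factor to be $\calH$ acting on itself by left multiplication and specialising the corresponding element to $1_\calH$ yields the congruence
\begin{equation*}
h\otimes m\equiv 1_\calH\otimes (Sh)\cdot m \pmod{I_r}. \tag{$\ast$}
\end{equation*}
Already $(\ast)$ shows that every class in $(\calH\otimes_{A_l}M)_{\scriptscriptstyle{\rm coinv}}$ is represented by an element $1_\calH\otimes m'$, which is exactly the surjectivity of the proposed inverse.

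First I would check that $\Phi$ is well defined on the balanced tensor product. The relevant balancing reads $t_l(a)h\otimes m=h\otimes s_l(a)\cdot m$; applying $\Phi$ to the left-hand side and using axiom \textit{iii}) of Definition \ref{HAlgd} in the special form $S(t_l(a)h)=S(h)s_l(a)$ (together with $S(1_\calH)=1_\calH$, which is immediate from \eqref{TwAp} and unitality) produces $S(h)\cdot(s_l(a)\cdot m)$, matching the image of the right-hand side.

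The step I expect to be the main obstacle is showing that $\Phi$ annihilates $I_r$, that is, $\Phi(h\cdot z)=\Phi(\eps_r(h)\cdot z)$ for $z=x\otimes m$, since this requires pinning down both the tensor $\calH$-action \eqref{mod-tensor} and the induced left $A_r$-action on $\calH\otimes_{A_l}M$. For the former, anti-multiplicativity of $S$ gives $\Phi(h\cdot z)=S(x)\,\big(S(h_{(1)})h_{(2)}\big)\cdot m$, and the first antipode axiom in \eqref{TwAp}, namely $S(h_{(1)})h_{(2)}=s_r\eps_r(h)$, collapses this to $S(x)\cdot\big(s_r(\eps_r h)\cdot m\big)$. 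For the latter, the explicit $(A_r,A_r)$-bimodule structure recorded in the proof of Lemma \ref{PartIntMod} gives $\eps_r(h)\cdot(x\otimes m)=x\otimes s_r(\eps_r h)\cdot m$ (again using $S(1_\calH)=1_\calH$), so that $\Phi(\eps_r(h)\cdot z)=S(x)\cdot\big(s_r(\eps_r h)\cdot m\big)$ as well. The two expressions coincide, hence $\Phi$ factors through a $k$-linear map $\bar\Phi:(\calH\otimes_{A_l}M)_{\scriptscriptstyle{\rm coinv}}\to M$ given by \eqref{almhof}.

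It then remains to verify that $\bar\Phi$ and $\Psi:m\mapsto[1_\calH\otimes m]$ are mutually inverse. One composite is immediate: $\bar\Phi\Psi(m)=(S1_\calH)\cdot m=m$. For the other, $\Psi\bar\Phi([h\otimes m])=[1_\calH\otimes (Sh)\cdot m]$, which equals $[h\otimes m]$ precisely by the congruence $(\ast)$. Thus $\bar\Phi$ is the desired isomorphism of $k$-modules.
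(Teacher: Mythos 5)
Your proof is correct and takes essentially the same route as the paper: the candidate inverse $m\mapsto[1_\calH\otimes m]$, the trivial composite $(S1_\calH)\cdot m=m$, and the nontrivial composite handled by specialising the Partial Integration Lemma \ref{PartIntMod} to $\calH\otimes_{A_l}M$ with first entry $1_\calH$. The only difference is that you additionally verify that \eqref{almhof} is well defined on the balanced tensor product and annihilates $I_r$ (via axiom \textit{iii}) of Definition \ref{HAlgd} and the antipode axiom \eqref{TwAp}), a point the paper leaves implicit; those verifications are correct.
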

\begin{proof}
Consider the map $M\to(\calH\otimes_{A_l} M)_{\scriptscriptstyle{\rm
    coinv}}$ 
induced by $m\mapsto 1\otimes m$. This clearly defines a right inverse 
to \rmref{almhof}. By the previous lemma it is also a left inverse. 
\end{proof}
\subsubsection{Right and left comodules over $\calH_r$} (cf.\ \cite{Schau:BONCRAASTFHB, Boe:GTFHA, BrzWis:CAC})
A right comodule over the underlying right bialgebroid $\calH_r$ 
is a right $A_r$-module $M$ equipped with a right $A_r$-module map
\[
\Delta_M:M\to M\otimes_{A_r}\calH, \quad m \mapsto m^{(0)} \otimes m^{(1)},
\]
satisfying the usual axioms for a coaction, where
the involved $(A_r, A_r)$-bimodule structure on $\calH_r$ 
is given by \rmref{bimod-rmod}.
We denote the category of right $\calH_r$-comodules 
by $\mathsf{Comod}_\erre(\calH_r)$. Any object
$M\in\mathsf{Comod}_\erre(\calH_r)$ carries, 
besides the right $A_r$-module
structure denoted $(m,a)\! \mapsto\! m\cdot a$, 
a commuting \textit{left} $A_r$-module structure defined by 
\begin{equation}
\label{right-A}
a\cdot m:=m^{(0)}\cdot \epsilon_r(s_r(a)m^{(1)}), \qquad a \in A_r.
\end{equation}
This yields a forgetful functor
$\mathsf{Comod}_\erre(\calH_r)\to\mathsf{Mod}(A_r^{\rm e})$. 
The category $\mathsf{Comod}_\erre(\calH_r)$ is monoidal with tensor structure $(M,N)\mapsto M\otimes_{A_r} N$ 
equipped with the comodule structure
\begin{equation}
\label{comod-tensor}
m\otimes n\mapsto m^{(0)}\otimes n^{(0)}\otimes m^{(1)} n^{(1)}.
\end{equation}
The unit is given by $A_r\in\mathsf{Comod}_\erre(\calH_r)$ 
equipped with coaction 
$a\mapsto 1\otimes s_r(a)$.
%
%

A {\em left} comodule $N$ over $\calH_r$ is defined similarly as a
left $A_r$-module 
equipped with a morphism 
$\Delta_N:N\to\calH\otimes_{A_r}N$, $n \mapsto n^{(-1)} \otimes
n^{(0)}$
of left $A_r$-modules, where as before
$\calH_r$ is an $(A_r, A_r)$-bimodule by means of \rmref{bimod-rmod}.
Similarly as for right $\calH_r$-comodules, this leads to a monoidal category 
$\mathsf{Comod}_\elle(\calH_r)$ 
with unit $A_r$ equipped with the coaction 
$a\mapsto t_r(a)\otimes 1$.

\subsubsection{Comodules over $\calH_l$}
Likewise, the underlying {\em left} bialgebroid $\calH_l$ has
associated categories of left and right comodules which we will denote
by $\mathsf{Comod}_\elle(\calH_l)$ and $\mathsf{Comod}_\erre(\calH_l)$,
respectively. They have analogous structures as the category
$\mathsf{Comod}_\erre(\calH_r)$  
above. For left and right $\calH_l$-coactions, we shall use an analogous Sweedler notation as above, but with
lower indices.

\subsubsection{The cotensor product and invariants}
The {\em cotensor product} (cf.\ \cite{EilMoo:HAFICCPAIDF}) of a right $\calH_r$-comodule $M$
 and a left $\calH_r$-comodule $M'$ is defined as
 \[
 M\bx_{\calH_r} M':=\ker(\Delta_M\otimes \id_{M'} - \id_M \otimes\Delta_{M'})\subset M\otimes_{A_r}M'.
 \]
With this, the {\em space of invariants} of a, say, right $\calH_r$-comodule $M$ is defined to be
\begin{equation*}
\label{asta}
M^{\scriptscriptstyle{\rm inv}} :=M\bx_{\calH_r} A_r.
\end{equation*}
There is a canonical embedding $M^{\scriptscriptstyle{\rm inv}}\subset M$ as the subspace
\[
M^{\scriptscriptstyle{\rm inv}}\cong\{m\in M \mid \Delta_M(m)=m\otimes 1\}.
\]
Likewise, one defines invariants for a, say, left $\calH_l$-comodule
$N$ as $N^{\scriptscriptstyle{\rm inv}} \!=\! A_l \bx_{\calH_l} N \!\cong \! \{ n \in N
\mid \gD_N(n) = 1 \otimes n \}$.
The dual statement to Proposition \ref{ProjAntipCor} 
for these two kinds of invariants 
is now given by
\begin{proposition}
\label{inv-comod}
Let $M\in \mathsf{Comod}_\erre(\calH_r)$, and consider 
$M \otimes_{A_r}\calH$ 
as a right $\calH_r$-comodule by means of 
the right coproduct in $\calH$ and the
coaction {\rm \rmref{comod-tensor}}.
Then one has a canonical isomorphism of $k$-modules
\[
M\stackrel{\cong}{\longrightarrow} (M\otimes_{A_r}
\calH)^{\scriptscriptstyle{\rm inv}}, \quad m\mapsto m^{(0)}\otimes S (m^{(1)}).
\] 
Similarly, for $N\in \mathsf{Comod}_\elle(\calH_l)$, 
\begin{equation*}
\label{schwamm}
N\stackrel{\cong}{\longrightarrow} (\calH \otimes_{A_l} N)^{\scriptscriptstyle{\rm inv}}, 
\quad n\mapsto S (n_{(-1)})\otimes n_{(0)},
\end{equation*}
where $\calH \otimes_{A_l} N$ is considered as a left $\calH_l$-comodule 
by means of the left coproduct in $\calH$ and the monoidal structure of $\mathsf{Comod}_\elle(\calH_l)$
which is analogous to {\rm \rmref{comod-tensor}}. 
\end{proposition}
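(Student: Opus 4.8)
The plan is to prove the first isomorphism in detail and to obtain the second one by the left--right symmetry of the Hopf algebroid axioms (interchanging the roles of $\calH_l$ and $\calH_r$ and replacing $S$ by $S^{-1}$, using the second halves of \rmref{bleistift} and \rmref{SHomId}); so I concentrate on the map $\iota\colon M\to(M\otimes_{A_r}\calH)^{\scriptscriptstyle{\rm inv}}$, $m\mapsto m^{(0)}\otimes S(m^{(1)})$. Exactly as in the proof of Proposition \ref{ProjAntipCor}, I would exhibit an explicit candidate inverse, namely the counit projection $\pi:=(\id_M\otimes\eps_r)\colon M\otimes_{A_r}\calH\to M$ (followed by the identification $M\otimes_{A_r}A_r\cong M$), and then check that $\iota$ and $\pi$ are mutually inverse, with one composite being essentially formal and the other requiring the invariance relation.

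First I would check well-definedness, i.e.\ that $\iota(m)$ really lies in the invariants. Applying the coaction \rmref{comod-tensor} to $m^{(0)}\otimes S(m^{(1)})$ produces a threefold tensor in which $\Delta_r$ acts on $S(m^{(1)})$; I would rewrite $\Delta_r\circ S$ by the anti-coalgebra property \rmref{bleistift} as ${\rm tw}\circ(S\otimes S)\circ\Delta_l$, so that now both a left and a right coproduct act on $m^{(1)}$. Twisted coassociativity \rmref{TwCoAssoc} then lets me merge these into a single higher coproduct, after which the antipode axiom \rmref{TwAp} together with the comodule counit identity collapses the last tensor leg to $1$. This is the direct analogue of the chain of equalities in Lemma \ref{PartIntMod} and shows $\iota(m)\in(M\otimes_{A_r}\calH)^{\scriptscriptstyle{\rm inv}}$.

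Next I would verify the two composites. For $\pi\circ\iota=\id_M$ one computes $m^{(0)}\cdot\eps_r\big(S(m^{(1)})\big)$ and rewrites $\eps_r\circ S=\eps_r s_l\eps_l$ by \rmref{SHomId}; the counit axiom of the right $\calH_r$-comodule $M$, read through the $A_r$-balancing of $M\otimes_{A_r}\calH$ (recall \rmref{right-A} and that $\calH_r$ carries the bimodule structure \rmref{bimod-rmod}), then returns $m$. The opposite composite $\iota\circ\pi=\id$ on invariants is the genuinely nontrivial direction and plays the role that Lemma \ref{PartIntMod} played before: starting from an invariant $\sum_i x_i\otimes h_i$, characterised by $\sum_i x_i^{(0)}\otimes h_i^{(1)}\otimes x_i^{(1)}h_i^{(2)}=\sum_i x_i\otimes h_i\otimes 1$, I would apply $\iota\circ\pi$ and use precisely this invariance relation, again in combination with the antipode axioms \rmref{TwAp} and the convolution identities of Lemma \ref{SCoUConv}, to recover $\sum_i x_i\otimes h_i$.

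I expect the main obstacle to be purely bookkeeping. On $M\otimes_{A_r}\calH$ several $(A_r,A_r)$-bimodule structures are simultaneously in play (the one on $M$, the one \rmref{bimod-rmod} on $\calH$, and the induced one \rmref{right-A}), and because the antipode interchanges the left and the right bialgebroid, each step forces a passage between $\Delta_l$ and $\Delta_r$ via twisted coassociativity and a careful record of over which module structure every tensor is balanced. Once the correct identifications are fixed, each individual manipulation is a direct application of the identities in the Remark following Definition \ref{HAlgd} and of Lemma \ref{SCoUConv}, exactly as in Lemma \ref{PartIntMod}.
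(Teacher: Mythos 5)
Your overall strategy (exhibit an explicit inverse and check the two composites) is the same as the paper's, and your sketch that $\iota\colon m\mapsto m^{(0)}\otimes S(m^{(1)})$ lands in the invariants is fine; but your candidate inverse is wrong, and this is a genuine gap, not bookkeeping. You take $\pi=\id_M\otimes\eps_r$, i.e.\ you apply the \emph{right} counit to the $\calH$-leg, and then argue that $\eps_r\circ S=\eps_r s_l\eps_l$ (true, by \rmref{SHomId}) lets the comodule counit axiom finish the job. It does not: \rmref{SHomId} gives $\eps_r\circ S=\phi\circ\eps_l$, and in a general Hopf algebroid $\phi\circ\eps_l\neq\eps_r$ --- unlike the Hopf algebra identity $\eps\circ S=\eps$, the two counits are genuinely different maps, and $S$ exchanges them only up to the anti-isomorphisms of \rmref{ABIso}. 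So your composite is $\pi\circ\iota\colon m\mapsto m^{(0)}\cdot\phi\big(\eps_l(m^{(1)})\big)$, to which the comodule counit axiom (which involves $\eps_r$, not $\phi\,\eps_l$) simply does not apply. This fails concretely: take $\calH=\V L$ for a Lie-Rinehart algebra with a nonzero flat right connection, and $M=\calH$ with coaction $\Delta_r$. For $X\in L$ one has $\Delta_r(X)=1\otimes X+X\otimes 1-\eps_r(X)\otimes 1$ and $\eps_r(S(X))=\eps_r(-X+\eps_r(X))=0$, so that
\[
\pi\big(\iota(X)\big)=X-\eps_r(X)\neq X
\]
whenever $\eps_r(X)=\nabla^r_X 1_A\neq 0$. (Your map $\pi$ is well defined on $M\otimes_{A_r}\calH$; it is just not an inverse of $\iota$, not even on the invariants.)

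The repair --- and this is what the paper does --- is to use the \emph{left} counit transported by $\theta$: the inverse is $m\otimes h\mapsto m\cdot\theta(\eps_l h)$, with $\theta=\eps_r\circ t_l$ as in \rmref{ABIso}. Indeed, by \rmref{SHomId} one has $\eps_l\, S=\eps_l s_r\eps_r=\theta^{-1}\eps_r$, hence $\theta\,\eps_l\, S=\eps_r$, and therefore
\[
m^{(0)}\cdot\theta\big(\eps_l (S(m^{(1)}))\big)=m^{(0)}\cdot\eps_r(m^{(1)})=m
\]
is exactly the comodule counit axiom; in the example above this gives $X$ rather than $X-\eps_r(X)$. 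For the second map of the proposition the inverse is likewise $h\otimes n\mapsto\phi^{-1}(\eps_r h)\cdot n$. Note also that your proposed reduction of the second isomorphism to the first by ``replacing $S$ by $S^{-1}$'' does not match the statement, which uses $S$ in both maps; the honest route is to rerun the same argument with the roles of $(\calH_l,\eps_l,\theta)$ and $(\calH_r,\eps_r,\phi)$ interchanged.
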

\begin{proof}
It is not difficult to see that both maps indeed map into the
space of invariants with respect to the coaction
\eqref{comod-tensor} and its analogue for $\mathsf{Comod}_\elle(\calH_l)$, respectively. 
To show that they are isomorphisms, define the two maps
\begin{eqnarray}
M\otimes_{A_r}\calH\to M,& & m\otimes h\mapsto m\cdot
\theta(\epsilon_l h) \nonumber \\
\label{tafelkreide}
\calH \otimes_{A_l } N \to N, & & h\otimes n\mapsto
\phi^{-1}(\epsilon_r h) \cdot n,
\end{eqnarray}
for the first and the second case, respectively, where $\phi$ and
$\theta$ are as in \rmref{ABIso}. 
Clearly, these define inverses for the respective maps above.
\end{proof}

\begin{remark}
For a left $A_l$-module $N$, the tensor product $\calH \otimes_{A_l} N$ is a
left $\calH_l$-comodule by the coaction $\gD_l \otimes \id_N$. Since the
space of
invariants of $\calH$ as a left $\calH_l$-comodule is precisely given
by $A_l$, we have the standard isomorphism 
\begin{equation}
\label{staub}
N \cong A_l \bx_{\calH_l} (\calH \otimes_{A_l} N), \quad n \mapsto 1
\otimes n,
\end{equation}
with inverse as in \rmref{tafelkreide}.
\end{remark}

\begin{remark}
In each of the tensor categories discussed in this section, the Hopf algebroid itself defines a canonical
object, by either the product or (left or right) coproduct. This
defines six---{\it a priori} different---bimodule structures on $\calH$: 
\begin{enumerate}
\item
$\calH$ is a left module over itself. As an object in
  $\mathsf{Mod}(\calH)$, 
this leads to the $(A_l,A_l)$-bimodule structure \eqref{bimod-lmod}.
\item
$\calH$ is a right module over itself. 
This leads to the $(A_r,A_r)$-bimodule structure given by \eqref{bimod-rmod}.
\item 
$\calH$ is a right $\calH_r$-comodule via the right
  comultiplication, i.e.\ an object in $\mathsf{Comod}_\erre(\calH_r)$. 
In this case, \eqref{right-A} leads to the $(A_r,A_r)$-bimodule structure 
\begin{equation}
\label{H-comod}
a_1\cdot h\cdot a_2:=s_r(a_1)hs_r(a_2).
\end{equation}
\item
As a left comodule over $\calH_r$ using $\Delta_r$, 
we get the $(A_r,A_r)$-bimodule structure $a_1\cdot h\cdot a_2:=t_r(a_2)ht_r(a_1)$.
\item
The left comultiplication gives a right comodule structure
  on $\calH$ over $\calH_l$. 
The associated $(A_l,A_l)$-bimodule structure 
reads $a_1\cdot h\cdot a_2:=t_l(a_2)ht_l(a_1)$.
\item
Finally, $\calH$ is a left comodule over $\calH_l$ using $\Delta_l$.
Similar to {\it iii}), this 
leads to the $(A_l,A_l)$-bimodule structure $a_1\cdot h\cdot a_2:=s_l(a_1)hs_l(a_2)$.
\end{enumerate} 
\end{remark}

\section{The Cyclic Theory}
\label{cyclictheory}
\subsection{Hopf-cyclic cohomology: the basic complexes}
\label{basic}
As before, let $(\calH_l,\calH_r,S)$ be a Hopf algebroid. We consider
$\calH$ as a left module over itself, 
which induces the $(A_l,A_l)$-bimodule structure \eqref{bimod-lmod}. With this we define
\[
C^n(\calH):=\underbrace{\calH\otimes_{A_l}\cdots\otimes_{A_l}\calH}_{\scriptscriptstyle{n~{\rm
      times}}}.
\]
For $n\geq 1$, define maps $\delta_i:C^n(\calH)\to C^{n+1}(\calH)$ by
\begin{equation}
\label{coface-hochschild}
\delta_i(h^1 \otimes\cdots \otimes h^n) := 
\begin{cases}
1 \otimes h^1 \otimes \cdots \otimes h^n \quad
&\mbox{if} \ i=0, \\
 h^1 \otimes\cdots \otimes \Delta_l h^i \otimes \cdots \otimes h^n \quad &\mbox{if} \
1 \leq i \leq n, \\
h^1 \otimes \cdots \otimes h^n \otimes 1 \quad &\mbox{if} \ i = n + 1. 
\end{cases}
\end{equation}
For $n=0$, put $C^0(\calH):=A_l$ and define
\begin{equation}
\label{CoFaceA}
\delta_i(a):=\begin{cases}t_l(a) \quad
& \mbox{if} \ i=0, \\
 s_l(a) \quad & \mbox{if} \ i =1.
 \end{cases}
\end{equation}
In the opposite direction we have codegeneracies $\sigma_i:C^n(\calH)\to C^{n-1}(\calH)$ given by
\begin{equation}
\label{CoDegHopf}
\sigma_i(h^1 \otimes \cdots \otimes h^n) = h^1 \otimes \cdots \otimes
\epsilon_l (h^{i+1}) \cdot h^{i+2} \otimes \cdots \otimes h^n, \quad 0 \leq i \leq n-1.
\end{equation}
One easily verifies that $(\delta_\bull,\sigma_\bull)$ equip
$C^\bull(\calH)$ with the structure of a cosimplicial space which
only depends on the underlying left bialgebroid structure of
$\calH$. For further use, introduce as usual the Hochschild differential by $b := \sum^{n+1}_{i=0}(-1)^i \gd_i$.
Next, define the cyclic operator $\tau_n:C^n(\calH)\to 
C^n(\calH)$ by
\begin{equation} 
\label{CyclicHopf}
\tau_n(h^1 \otimes \cdots \otimes h^n) = (Sh^1)\cdot(h^2
\otimes \cdots \otimes h^n \otimes 1),
\end{equation}
where the $\calH$-module structure on $C^n(\calH)$ is given as in \eqref{mod-tensor}.
\begin{theorem}
\label{CyclThm}
For a Hopf algebroid $(\calH_l,\calH_r,S)$ the formulae
above give $C^\bull(\calH)$ the structure of a cocyclic module if and
only if $S^2 = \id$. More specifically,
\[
\tau^{n+1}_n(h^1 \otimes \cdots \otimes h^n) = S^2 (h^1) \otimes\cdots \otimes
S^2 (h^n).
\]
\end{theorem}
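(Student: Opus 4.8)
The plan is to reduce the entire statement to the single displayed formula for $\tau_n^{n+1}$. First I would observe that the cosimplicial identities for $(\delta_\bull,\sigma_\bull)$ hold by construction, and that the mixed relations governing the interaction of $\tau_n$ with the cofaces \eqref{coface-hochschild} and codegeneracies \eqref{CoDegHopf} hold for \emph{every} Hopf algebroid, so that $(C^\bull(\calH),\delta_\bull,\sigma_\bull,\tau_\bull)$ is always a para-cocyclic module; these relations are direct computations from Definition \ref{def-hopf-algbd} together with the convolution identities of Lemma \ref{SCoUConv}, entirely parallel to the classical Hopf-algebra case. Since a para-cocyclic module is cocyclic exactly when $\tau_n^{n+1}=\id$ for all $n$, everything reduces to computing $\tau_n^{n+1}$ and reading off when it equals the identity.

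To make $\tau_n$ amenable to iteration I would first rewrite it with all antipodes pushed to the outside. Expanding the diagonal action \eqref{mod-tensor} through the iterated left coproduct $\Delta_l^{(n-1)}$ gives $\tau_n(h^1\otimes\cdots\otimes h^n)=(Sh^1)_{(1)}h^2\otimes\cdots\otimes(Sh^1)_{(n-1)}h^n\otimes(Sh^1)_{(n)}$, and then the anti-coalgebra identity $\Delta_l S={\rm tw}\circ(S\otimes S)\Delta_r$ of \eqref{bleistift}, iterated by means of coassociativity, converts $\Delta_l^{(n-1)}(Sh^1)$ into the order-reversed iterated \emph{right} coproduct. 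Setting $g:=h^1$ this yields the clean form
\[
\tau_n(h^1\otimes\cdots\otimes h^n)=S(g^{(n)})\,h^2\otimes S(g^{(n-1)})\,h^3\otimes\cdots\otimes S(g^{(2)})\,h^n\otimes S(g^{(1)}),
\]
where $\Delta_r^{(n-1)}(g)=g^{(1)}\otimes\cdots\otimes g^{(n)}$. In this form the leading leg is antipoded and its right-coproduct components are distributed over the shifted tensor slots, with the last component $S(g^{(1)})$ filling the vacated final slot.

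The core of the argument is then an induction that establishes a closed expression for the powers $\tau_n^{j}$, $0\le j\le n+1$, and specialises to $j=n+1$. The point is that at each application every leg already has the shape $S(\text{old piece})\,h^{i+1}$, so applying the anti-algebra map $S$ to it produces $S(h^{i+1})\,S^2(\cdots)$; thus a given factor acquires its \emph{second} antipode exactly when it has travelled once around the cycle. What prevents the expression from proliferating is the antipode axiom \eqref{TwAp}: at each step the newly created antipode--coproduct pairs contract, collapsing via \eqref{TwAp} and Lemma \ref{SCoUConv} to counit terms $\epsilon_l,\epsilon_r$, which are then absorbed through the $A_l$-balancing and the counit--comonoid identity of the left bialgebroid, exactly as in the final step of the proof of Lemma \ref{PartIntMod}. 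Telescoping these contractions over the full cycle leaves each $h^i$ back in its original slot carrying precisely $S^2$, giving $\tau_n^{n+1}(h^1\otimes\cdots\otimes h^n)=S^2(h^1)\otimes\cdots\otimes S^2(h^n)$.

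I expect the main obstacle to be the bookkeeping of Sweedler indices across the $n+1$ rotations, compounded by the fact that every use of $S$ trades $\Delta_l$ for $\Delta_r$ and reverses the order of the legs: before the antipode axioms can fire one must repeatedly realign the left and right coproducts, which is exactly what twisted coassociativity \eqref{TwCoAssoc} is for. Organising the induction so that \eqref{TwAp} contracts precisely the correct pair at each stage is the delicate point; everything else is formal. With the formula in hand the equivalence is immediate: if $S^2=\id$ then $\tau_n^{n+1}=\id$ and $C^\bull(\calH)$ is cocyclic, while conversely cocyclicity already at $n=1$ reads $\tau_1^2=S^2=\id$, since $\tau_1(h)=Sh$, forcing $S^2=\id$.
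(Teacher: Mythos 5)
Your proposal is correct in its outline, but it organises the proof differently from the paper. The paper (\S\ref{coinvariants}) never iterates $\tau_n$ on $C^\bull(\calH)$ directly: it introduces $B^\bull(\calH)=A_r\otimes_{A_l^{\rm e}}C^{\bull+1}(\calH)$, the standard para-cocyclic module of the coalgebra $(\calH_l,\gD_l,\eps_l)$ with operators \rmref{CoACOp}, and shows that the coinvariant-localisation map $\overline{\Psi}_{\scriptscriptstyle{\rm coinv}}(h^0\otimes\cdots\otimes h^n)=S(h^0)\cdot(h^1\otimes\cdots\otimes h^n)$ intertwines all structure maps; since this map is surjective with right inverse $h^1\otimes\cdots\otimes h^n\mapsto 1\otimes h^1\otimes\cdots\otimes h^n$, both para-cocyclicity and the power formula descend for free, because on $B^\bull(\calH)$ the cyclic operator is the plain rotation decorated by $S^2$ and its $(n+1)$-st power is manifestly $(S^2)^{\otimes(n+1)}$. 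All the Sweedler-calculus difficulty is thus concentrated in a single lemma about \emph{one} application of $\tau_n$, namely $S^2(h^{(n)}_0)\cdot\bigl(S(h^{(n-1)}_0)h_1\otimes\cdots\otimes S(h^{(1)}_0)h_{n-1}\otimes 1\bigr)=h_1\otimes\cdots\otimes h_{n-1}\otimes S^2h_0$, proved by induction on $n$. Your direct route is viable: your ``clean form'' of $\tau_n$ obtained from \rmref{bleistift} is exactly how the paper's computation begins, and the telescoping contraction you describe (the axioms \rmref{TwAp} firing after realignment by twisted coassociativity \rmref{TwCoAssoc}) is precisely the content of that lemma, which your induction on the powers $\tau_n^j$ would have to re-invoke at every one of the $n+1$ steps. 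So what your approach buys is self-containedness---no $B^\bull(\calH)$, no coinvariant localisation---while what it costs is that the para-cocyclic compatibilities and the closed form for each $\tau_n^j$ must be ground out by hand (neither is actually executed in your plan, and they are where all the work lives); the paper's packaging makes the power formula transparent, proves para-cocyclicity without computation, and sets up machinery reused later (e.g.\ in the proof of Theorem \ref{LRCyclCohom}). Your handling of the ``only if'' direction via $\tau_1=S$, $\tau_1^2=S^2$ is correct and coincides with the paper's.
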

\begin{remark}
This theorem was first proved by Connes and Moscovici in \cite{ConMos:DCCAHASITG}
in a special case using a characteristic 
map associated to a faithful trace. A more general
version (for the {\em if}-direction) appeared in \cite{KhaRan:PHAATCC} 
for so-called para-Hopf algebroids. 
\end{remark}

\subsection{The approach via coinvariants}
\label{coinvariants}
In this section we will prove Theorem \ref{CyclThm} using coinvariant localisation. This approach is
inspired by the analogous procedure for Hopf algebras as in \cite{Cra:CCOHA}.

Let us first define the fundamental cocyclic module associated to 
a Hopf algebroid
arising from its underlying left
bialgebroid structure: define the $k$-module
\[
B^n(\calH):=A_r\otimes_{A_l^{\rm e}}C^{n+1}(\calH).
\]
Here, the right $A_l^{\rm e}$-module structure 
on $A_r$ is given using
$\theta:A_l\to A_r^\op$, whereas the left $A_l^{\rm e}$-module
structure on $C^{n+1}(\calH)$ is defined using $s_l$ and $t_l':= t_l \circ
\theta^{-1} \circ \phi$: the $k$-module $B^n(\calH)$ 
is isomorphic to a quotient of $C^{n+1}(\calH)$ by the $k$-module $I^n\subset C^{n+1}(\calH)$ defined by
\[
I^n:=\mbox{span}\{ s_l(a)h^0\otimes\cdots\otimes h^n-h^0\otimes\cdots\otimes t_l'(a)h^n,~a\in A_l,~h^i\in\calH\}.
\]
When $S^2=\id$, and therefore $\theta = \phi$,  
the right hand side is the cyclic tensor product (cf.\ \cite{Qui:CCAAE}) of
$\calH$ in the category of $(A_l,A_l)$-bimodules with respect to the
bimodule structure induced by the forgetful functor
$\mathsf{Mod}(\calH) \to \mathsf{Mod}(A_l^{\rm e})$.

Define the coface, codegeneracy, and cocyclic operators on $B^n(\calH)$ as follows:
\begin{equation}\begin{split}
\label{CoACOp}
\gd_i(h^0 \otimes h^1 \otimes \cdots \otimes h^n)  &= \left\{
\begin{array}{ll}
h^0 \otimes \cdots \otimes \gD_l h^i \otimes \cdots \otimes h^n &
 \quad \, \mbox{if} \ 0 \leq i \leq n, \\
 h^0_{(2)} \otimes h^1 \otimes \cdots \otimes  S^2(h^0_{(1)}) & \quad
 \, \mbox{if} \
i = n+ 1,
\end{array}\right. \\
\sigma_i(h^0 \otimes h^1 \otimes  \cdots \otimes  h^n ) &= h^0 \otimes
\cdots \otimes h^i \cdot \eps_l (h^{i+1}) \otimes
\cdots \otimes h^n, \ \ 0 \leq i \leq n-1, \\
\tau_n(h^0 \otimes h^1 \otimes  \cdots \otimes h^n ) &=  h^1 \otimes h^2 \otimes
\cdots \otimes h^n \otimes S^2(h^0).
\end{split}
\end{equation}
It is easy to verify that with these structure maps $B^\bull(\calH)$
is a para-cocyclic module, 
which is cocyclic if and only if $S^2=\id$.
 In this case this is just the canonical cocyclic module  
associated to the $A_l$-coalgebra
$(\calH_l,\Delta_l,\epsilon_l)$ arising from the underlying left
bialgebroid structure, and to which we will refer as
$\calH^\bull_{\scriptscriptstyle{{\rm coalg}},
  \natural}$. 
Likewise, \nolinebreak 
the underlying right
bialgebroid gives rise to a similar construction by means of $(\calH_r, \gD_r,
\eps_r)$.

On the other hand, we have $C^{n+1}(\calH)\in\mathsf{Mod}(\calH)$, and
we can apply the functor of coinvariants
to get $C^{n+1}(\calH)_{\scriptscriptstyle{\rm coinv}}
\cong
C^n(\calH)$ by Proposition \ref{ProjAntipCor}. Explicitly, 
this isomorphism
is implemented by the maps
\[
\vspace*{-.1cm}
\xymatrix{
C^n(\calH)\ar@<0.5ex>[rr]^{\Phi_{\scriptscriptstyle{\rm coinv}}}&&
\ar@<0.5ex>[ll]^{\Psi_{\scriptscriptstyle{\rm coinv}}} C^{n+1}(\calH),}
\vspace*{-.2cm}
\]
given by
\begin{eqnarray}
\label{castor}
\Phi_{\scriptscriptstyle{\rm coinv}}(h^1\otimes\cdots\otimes h^n) &:=&
1\otimes h^1\otimes\cdots\otimes h^n, \\
\label{pollux}
\Psi_{\scriptscriptstyle{\rm coinv}}(h^0\otimes\cdots\otimes h^n) &:=&
S(h^0)\cdot\left(h^1\otimes\cdots\otimes h^n\right).
\end{eqnarray}
Now observe that $I^n\subseteq\ker(\Psi_{\scriptscriptstyle{\rm coinv}})$,
so that we have a diagram
\[
\xymatrix{C^n(\calH)\ar[rr]^{\Phi_{\scriptscriptstyle{\rm coinv}}}&&C^{n+1}(\calH)\ar[dl]^{\pi}\\
&B^n(\calH)\ar[ul]^{\overline{\Psi}_{\scriptscriptstyle{\rm coinv}}}
}
\] 
where $\pi$ denotes the canonical projection and
$\overline{\Psi}_{\scriptscriptstyle{\rm coinv}}$ the induced map.
\begin{proposition}
The morphism $\overline{\Psi}_{\scriptscriptstyle{\rm coinv}}$ intertwines the
maps $\delta_i$, $\sigma_i$, and $\tau_n$ from
{\rm \rmref{coface-hochschild}--\rmref{CyclicHopf}} with the respective ones
from {\rm \rmref{CoACOp}}.
\end{proposition}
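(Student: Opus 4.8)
Because the projection $\pi\colon C^{n+1}(\calH)\to B^n(\calH)$ is surjective and $\overline\Psi_{\scriptscriptstyle{\rm coinv}}\circ\pi=\Psi_{\scriptscriptstyle{\rm coinv}}$, the plan is to check the three intertwining relations after lifting them to the spaces $C^{\bull}(\calH)$. The operators in \rmref{CoACOp} are written on representatives, so they lift to maps $\tilde X$ on $C^{\bull}(\calH)$ with $\pi\circ\tilde X=X^{B}\circ\pi$; hence $\overline\Psi_{\scriptscriptstyle{\rm coinv}}\circ X^{B}=X^{C}\circ\overline\Psi_{\scriptscriptstyle{\rm coinv}}$ is equivalent to the identity $\Psi_{\scriptscriptstyle{\rm coinv}}\circ\tilde X=X^{C}\circ\Psi_{\scriptscriptstyle{\rm coinv}}$ of maps out of $C^{n+1}(\calH)$, where $X^{C}$ is the corresponding operator from \rmref{coface-hochschild}--\rmref{CyclicHopf}. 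The decisive feature is that, since $\Psi_{\scriptscriptstyle{\rm coinv}}$ descends to the isomorphism $C^{n+1}(\calH)_{\scriptscriptstyle{\rm coinv}}\cong C^{n}(\calH)$ of Proposition \ref{ProjAntipCor}, its kernel is exactly $I_r$; therefore any congruence valid modulo $I_r$ — in particular the Partial Integration Lemma \ref{PartIntMod} — may be applied freely underneath $\Psi_{\scriptscriptstyle{\rm coinv}}$.

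The inner operators are purely (co)algebraic. For the cofaces $\delta_i$ with $1\le i\le n$ both sides insert $\Delta_l$ at the $i$-th slot, and equality is coassociativity of $\Delta_l$ applied to $S(h^0)$ together with the fact that $\Delta_l$ is multiplicative; for the codegeneracies one contracts an $\epsilon_l$, and the discrepancy between contracting to the left, as in \rmref{CoACOp}, and to the right, as in \rmref{CoDegHopf}, is absorbed by the balancing relation $t_l(a)h\otimes h'=h\otimes s_l(a)h'$ of $\otimes_{A_l}$, the left counit axioms accounting for the counit of a product. The only codegeneracy meeting the zeroth slot is $\sigma_0$, whose leading entry becomes $t_l(\epsilon_l h^1)$; applying $\Psi_{\scriptscriptstyle{\rm coinv}}$ then needs the Hopf-algebroid relation $S\circ t_l=t_r\epsilon_r s_l=s_l$, which follows from \rmref{SHomId} and the relation $t_r=s_l\,\epsilon_l\,t_r$ of \rmref{Subr}.

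The cyclic operator, usually the delicate point, is here the easiest once one notices that $\delta_{n+1}^{C}$ is the map $j$ appending a $1$ in the last slot and that $\tau^{C}_n=\Psi_{\scriptscriptstyle{\rm coinv}}\circ j$ by \rmref{CyclicHopf}. Writing $Y:=h^1\otimes\cdots\otimes h^n$, one finds
\[
\tau^{C}_n\big(\Psi_{\scriptscriptstyle{\rm coinv}}(h^0\otimes\cdots\otimes h^n)\big)=\Psi_{\scriptscriptstyle{\rm coinv}}\big((S(h^0)\cdot Y)\otimes 1\big),
\]
and a single use of Lemma \ref{PartIntMod}, moving $S(h^0)$ off the factor $Y$ onto the last entry, gives $(S(h^0)\cdot Y)\otimes 1\equiv Y\otimes S^{2}(h^0)$ modulo $I_r$; the right-hand side therefore equals $\Psi_{\scriptscriptstyle{\rm coinv}}(h^1\otimes\cdots\otimes h^n\otimes S^{2}(h^0))=\Psi_{\scriptscriptstyle{\rm coinv}}(\tilde\tau_n(h^0\otimes\cdots\otimes h^n))$, which is exactly where the $S^{2}$ of \rmref{CoACOp} is produced. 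Consequently the genuine obstacle is the pair of boundary cofaces $\delta_0$ and $\delta_{n+1}$: for these neither side ends in an outer $\Psi_{\scriptscriptstyle{\rm coinv}}$, so a congruence modulo $I_r$ no longer suffices and one must establish a strict identity in $C^{\bull}(\calH)$. I would do this by collapsing the zeroth, respectively last, tensor slot using the coproduct rule \rmref{bleistift} for $S$, twisted coassociativity \rmref{TwCoAssoc} and the antipode axiom \rmref{TwAp}, the boundary factor disappearing through the counit exactly as in the computation proving Lemma \ref{PartIntMod}. Having verified all three relations, $\overline\Psi_{\scriptscriptstyle{\rm coinv}}$ is a morphism of para-cocyclic modules, and since $B^{\bull}(\calH)$ is cocyclic precisely when $S^{2}=\id$ this yields Theorem \ref{CyclThm}.
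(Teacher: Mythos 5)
Your proposal is correct, and on the decisive point --- the cyclic operator --- it takes a genuinely different route from the paper. The paper also treats $\tau_n$ as the only case needing real work: it expands $\tau_n\overline{\Psi}_{\scriptscriptstyle{\rm coinv}}$ and $\overline{\Psi}_{\scriptscriptstyle{\rm coinv}}\tau_n$ on representatives and reduces their equality to the \emph{strict} identity
\[
S^2(h^{(n)}_0)\cdot\bigl( S(h^{(n-1)}_0)h_1\otimes\cdots\otimes S(h^{(1)}_0)h_{n-1}\otimes 1\bigr)=h_1\otimes\cdots\otimes h_{n-1}\otimes S^2 h_0
\]
in $C^n(\calH)$, proved by induction on $n$ from Lemma \ref{SCoUConv}, \rmref{SHomId} and \rmref{TwAp}, the base case being \rmref{weihnachtsmann} (which for $h_1=1$ is the para-Hopf axiom of \cite{KhaRan:PHAATCC}). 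You bypass that induction entirely through two observations the paper does not exploit: that $\tau^{C}_n=\Psi_{\scriptscriptstyle{\rm coinv}}\circ\delta^{C}_{n+1}$ (appending a $1$), and that $\ker\Psi_{\scriptscriptstyle{\rm coinv}}=I_r$ by Proposition \ref{ProjAntipCor}, so that congruences modulo $I_r$ --- in particular Lemma \ref{PartIntMod} --- may be applied underneath $\Psi_{\scriptscriptstyle{\rm coinv}}$; a single partial integration, moving $S(h^0)$ onto the appended slot as $S^2(h^0)$, then yields the intertwining and makes transparent why the $S^2$ of \rmref{CoACOp} appears. Your argument is shorter and more structural; what the paper's computation buys is the strict identity itself, which has independent interest precisely because it recovers the para-Hopf axiom. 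The price of your route, as you correctly flag, is that the congruence trick is unavailable for the outer cofaces $\delta_0$ and $\delta_{n+1}$, where strict identities are needed; the ingredients you name (\rmref{bleistift}, \rmref{TwCoAssoc}, \rmref{TwAp}) do suffice for $\delta_0$, and for $\delta_{n+1}$ you can spare yourself the computation by noting that $\delta^{B}_{n+1}=\tau^{B}_{n+1}\delta^{B}_{0}$ holds on representatives while $\delta^{C}_{n+1}=\tau^{C}_{n+1}\delta^{C}_{0}$ holds trivially in $C^\bull(\calH)$ because $S(1)=1$, so this case follows from the $\tau$- and $\delta_0$-intertwinings already established. (The paper's own proof is no more complete here: it verifies only $\tau_n$ and leaves all simplicial operators to the reader, so your treatment of them is a net addition.) One citation slip: $S\circ t_l=s_l$ follows from $St_l=t_r\eps_r s_l$ in \rmref{SHomId} together with the axiom $t_r\eps_r s_l=s_l$ of \rmref{Subr}; the identity $s_l\eps_l t_r=t_r$ that you invoke is a different axiom and plays no role there.
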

\begin{proof}
Consider first the cyclic operator
\[
\begin{split}
\tau_n\overline{\Psi}_{\scriptscriptstyle{\rm coinv}}&(h_0\otimes h_1\otimes\cdots\otimes h_n)
= \tau_n\big(S(h_0)\cdot(h_1\otimes\cdots\otimes h_n)\big)\\
&=S\big((Sh_0)_{(1)}h_1\big)\cdot\left((Sh_0)_{(2)}h_2\otimes\cdots\otimes (Sh)_{(n)}h_n\otimes 1\right)\\
&=S(h_1)\cdot S^2(h^{(n)}_0)\cdot\left( S(h^{(n-1)}_0)h_2\otimes\cdots\otimes S(h^{(1)}_0)h_n\otimes 1\right),
\end{split}
\]
where we used that $C^n(\calH)\in\mathsf{Mod}(\calH)$ with the module structure on tensor products 
given by \eqref{mod-tensor}, as well as the fact that the antipode $S$ is an anti-algebra homomorphism.
On the other hand,
\[
\overline{\Psi}_{\scriptscriptstyle{\rm coinv}} \tau_n\left(h_0\otimes
h^1\otimes\cdots\otimes h^n\right)
=S(h_1)\cdot\left(h_2\otimes\cdots\otimes h_n\otimes S^2h_0\right). 
\]
The statement therefore follows from the following:
\begin{lemma}
In $C^n(\calH)$ the following identity holds:
\[
S^2(h^{(n)}_0)\cdot\left( S(h^{(n-1)}_0)h_1\otimes\cdots\otimes
S(h^{(1)}_0)h_{n-1}\otimes 1\right)
=h_1\otimes\cdots\otimes h_{n-1}\otimes S^2 h_0.
\]
\end{lemma}
\begin{proof}
This is proved by induction:
first, for $n=2$ we have by 
the right comonoid identities, \rmref{SRel},
\rmref{SHomId}, and \rmref{TwAp} for any $h_1, h \in \calH$ in $C^2(\calH)$
\begin{equation}
\label{weihnachtsmann}
\begin{split}
h_1 \otimes Sh &= h_1 \otimes S\big(h^{(1)}s_r (\epsilon_r h^{(2)})\big)  \\
&= t_l \eps_l (Sh^{(2)}) h_1 \otimes Sh^{(1)} \\
&= s_r \eps_r h^{(2)} h_1 \otimes Sh^{(1)} \\
%
%
&= Sh^{(2)}_{(1)} h^{(2)}_{(2)} h_1 \otimes Sh^{(1)} =
Sh^{(2)}_{(1)}h_{(2)} h_1 \otimes Sh^{(1)}_{(1)}.
\end{split}
\end{equation}
Applying this identity to $h :=S h_0$ proves the case
$n=2$. 
Assume now that the identity holds for $n-1$. Then we have, 
using 
\rmref{SRel}, \rmref{SHomId}, and \rmref{TwAp},
\[
\begin{split}
h_1 \otimes & \cdots \otimes h_{n} \otimes S^2 h_0 = h_1  \otimes
\left(S^2(h^{(n)}_0)\cdot\left(S h^{(n-1)}_0 h_2 \otimes
\cdots\otimes  S h^{(1)}_0 h_{n} \otimes 1\right)\right) \\
 &= h_1 \otimes s_l \eps_l( S^2 {h_0}^{(n)}_{(1)}) S^2
	       {h_0}^{(n)}_{(2)} \cdot\left(S h^{(n-1)}_0 h_2 \otimes
\cdots\otimes  S h^{(1)}_0 h_{n} \otimes 1\right) \\
 &= s_r \epsilon_r(S {h_0}^{(n)}_{(1)}) h_1\otimes S^2
	       {h_0}^{(n)}_{(2)} \cdot\left(Sh^{(n-1)}_0 h_2 \otimes
\cdots\otimes  Sh^{(1)}_0 h_{n} \otimes 1\right)\\
%
 &= S^2 ({h_0}^{(n+1)}_{(1)}) S h^{(n)}_0 h_2 \otimes S^2
	       {h_0}^{(n+1)}_{(2)} \cdot\left(Sh^{(n-1)}_0 h_2 \otimes
\cdots\otimes  Sh^{(1)}_0 h_{n} \otimes 1\right)\\
&=S^2(h^{(n+1)}_0)\cdot\left(S h^{(n)}_0 h_2\otimes\cdots\otimes S
	       h^{(1)}_0 h_{n+1}\otimes 1\right).
\end{split}
\]
This completes the proof of the lemma.
\end{proof}
\begin{remark}
The identity \eqref{weihnachtsmann} for $h_1=1$ 
appears as an axiom in the definition of a para-Hopf algebroid in \cite{KhaRan:PHAATCC}.
\end{remark}
Hence the proposition is proved.
\end{proof}
\begin{proof} ({\em of Theorem} \ref{CyclThm})
Since $\overline{\Psi}_{\scriptscriptstyle{\rm coinv}}$ is surjective
with right inverse $\Phi_{\scriptscriptstyle{\rm coinv}}$, this proves Theorem \ref{CyclThm}.
\end{proof}

\begin{definition}
In case $S^2 = \id$, we denote by $\calH_\natural^\bull :=
C^\bull(\calH)$ 
the cocyclic module equipped with the operators
  \rmref{coface-hochschild}--\rmref{CyclicHopf} and by
  $(C^\bull(\calH), b, B)$ its associated mixed complex (cf.\ \cite{Con:NCDG, Kas:CHCAMC}).
  Its Hochschild and
  (periodic) cyclic cohomology groups are denoted by
  $HH^\bull(\calH)$, $HC^\bull(\calH)$ and $HP^\bull(\calH)$, and referred to as {\em Hopf-cyclic} cohomology groups.
\end{definition}

\begin{remark}
One may think of the forgetful functor
$\mathsf{Mod}(\calH)\to\mathsf{Mod}(A_l^{\rm e})$ as coming from the
morphism of Hopf algebroids $\iota: A_l^{\rm e}\to\calH$, $a_1\otimes
a_2\mapsto s_l(a_1)t_l(a_2)$; see Section \ref{pg} for the
description of the Hopf algebroid structure of $A_l^{\rm e}$.
From this point 
of view, $B^n(\calH)$ is simply the 
coinvariant localisation of $C^{n+1}(\calH)\in\mathsf{Mod}(\calH)$
with respect to $A_l^{\rm e}$. On the other hand, one can also directly
show that under the projection $B^n(\calH) \to
C^{n+1}(\calH)_{\scriptscriptstyle{\rm coinv}}$
induced by $\iota$, the operators \rmref{CoACOp} descend to
well-defined maps on
$C^{n+1}(\calH)_{\scriptscriptstyle{\rm coinv}}$,
turning it into a cocyclic module.
\end{remark}

\subsection{Dual Hopf-cyclic homology}
\label{dual}
\subsubsection{The chain complexes}
\vspace*{-.2cm}
In this section we consider $\calH$ as a right $\calH_r$-comodule 
with $(A_r, A_r)$-bimodule structure \eqref{H-comod} given by left and right multiplication with $s_r(a),~a\in A_r$.
Using the tensor structure of the category
$\mathsf{Comod}_\erre(\calH_r)$, 
we define
\[
C_n(\calH):=\underbrace{\calH\otimes_{A_r}\cdots\otimes_{A_r}\calH}_{\scriptscriptstyle{n~{\rm
      times}}}.
\]
Face and degeneracy operators can be introduced by
\begin{equation}
\label{DualHH}
\begin{split} 
\!\! d_i(h^1  \otimes   \cdots   \otimes   h^n)  &=  \left\{ \!\!
\begin{array}{ll}
\eps_r(h^1) \cdot h^2  \otimes   \cdots    \otimes
  h^n  
& \qquad\ \, \mbox{if} \ i = 0,  
\\
h^1  \otimes   \cdots \otimes   h^i h^{i+1}
 \otimes  \cdots   \otimes   h^n 
& \qquad\ \, \mbox{if} \ 1 \leq i \leq n-1,  
\\
h^1  \otimes   \cdots    \otimes  
h^{n-1} \cdot \eps_r (S^{-1} h^n) 
& \qquad\ \, \mbox{if} \ i = n, 
\end{array}\right. \\ 
\!\! s_i(h^1  \otimes   \cdots   \otimes   h^n)  &= \left\{ \!\!
\begin{array}{ll}
1  \otimes   h^1  \otimes   \cdots    \otimes h^n
& \!\! \mbox{if} \ i = 0,    
\\
h^1  \otimes   \cdots \otimes   h^i 
\otimes   1  \otimes   h^{i+1}
 \otimes  \cdots   \otimes   h^n 
& \!\! \mbox{if} \ 1 \leq i \leq n.  
\end{array} \right.
\end{split}
\end{equation}
Elements of degree zero (of $A_r$, that is) are mapped to zero, i.e.,
$d_0(a) = 0$, $a \in A_r$.
To define a cyclic structure we assume the antipode $S$ to be invertible and define
\begin{equation}
\label{DualCyc}
t_n(h^1\otimes  \cdots  \otimes h^n) =
S^{-1}( h^{1}_{(2)} \cdots h^{n-1}_{(2)}  h^{n} )\otimes  h^{1}_{(1)} \otimes h^{2}_{(1)}
\otimes  \cdots \otimes  h^{n-1}_{(1)}. 
\end{equation}
One easily verifies that this operator is well-defined. Below we shall prove that these $k$-modules and
maps are canonically isomorphic to the cyclic dual of the cocyclic module $C^\bull(\calH)$ of \S \ref{basic}.
This also proves that $C_\bull(\calH)$ is indeed a cyclic module.

\subsubsection{Cyclic duality} (cf.\ \cite{Con:CCEFE, Lod:CH}) We recall the notion of cyclic
duality. Let $\gL$ denote Connes' cyclic category. 
A cyclic module is a functor $\gL^\op \to\mathsf{Mod}(k)$, 
i.e.\ a contravariant functor
from $\gL$ to $\mathsf{Mod}(k)$; whereas a cocyclic module is a 
functor $\gL \to \mathsf{Mod}(k)$. 
Remarkably, there is a canonical equivalence $\gL \cong \gL^{\op}$
that allows one to construct a cocyclic
 module out of a cyclic module and {\it vice versa}. Explicitly, in the first direction this is
 done as follows:
let $Y = (Y^\bull, \gd_\bull, \gs_\bull, \tau_\bull)$ be a
para-cocyclic module with invertible operator $\tau$. Its {\em
  cyclic dual} is defined to be $\check{Y} := (\check{Y}_\bull, d_\bull, s_\bull,
t_\bull)$ where $\check{Y}_n := Y^n$ in degree $n$ and
\begin{equation*}
\begin{array}{lcrcll}
d_i := \gs_{i-1} & \!\!: & \check{Y}_n & \to & \check{Y}_{n-1}, & \quad 1 \leq i \leq n,
\\
d_0 := \gs_{n-1} \tau_n  & \!\!: & \check{Y}_n & \to & \check{Y}_{n-1}, &  \\
s_i := \gd_{i} & \!\!: & \check{Y}_n & \to & \check{Y}_{n+1}, & \quad 0 \leq i \leq n-1, \\
t_n :=  \tau^{-1}_n  & \!\!: & \check{Y}_n & \to & \check{Y}_{n}. &
\end{array}
\end{equation*}
It can be shown that $\check{Y}$ carries the structure of a
para-cyclic object in the category of $k$-modules and is cyclic if $Y$ is cocyclic. 

\subsubsection{The Hopf-Galois map and cyclic duality}
\label{adarte}
In this section we will prove that the cyclic module dual to $C^\bull(\calH)$ is canonically isomorphic 
to  $C_\bull(\calH)$. The explicit map implementing this isomorphism is given by 
generalising the Hopf-Galois map 
from
\cite[Thm.\ 3.5]{Schau:DADOQGHA} and its inverse from \cite{BoeSzl:HAWBAAIAD} for Hopf
algebroids.
\begin{lemma}
\label{hopf-galois}
For each $n \geq 0$, the $k$-modules $C_n(\calH)$ and $C^n(\calH)$ are
isomorphic by means of the {\em Hopf-Galois map}
$\varphi_n:C_n(\calH)\to C^n(\calH)$ defined inductively by $\varphi_1:=\id_\calH$ and
\[
\varphi_n(h^1\otimes\cdots\otimes h^n):=h^1\cdot\left(1\otimes\varphi_{n-1}(h^2\otimes\cdots\otimes h^n)\right), \qquad n \geq 2.
\]
For $n = 0$ one defines $\varphi_0:=\phi:A_l^\op \to A_r$.
\end{lemma}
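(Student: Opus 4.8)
The plan is to argue by induction on $n$, reducing the invertibility of the iterated map $\varphi_n$ to a single ``one-step'' Galois isomorphism. The cases $n=0$ and $n=1$ hold by definition, since $\varphi_0=\phi$ is the isomorphism in \eqref{ABIso} and $\varphi_1=\id_\calH$. For the inductive step I would read the defining recursion as a factorisation. For $M\in\mathsf{Mod}(\calH)$ set
\[
\gamma_M\colon \calH\otimes_{A_r}M\longrightarrow \calH\otimes_{A_l}M,\qquad h\otimes m\longmapsto h_{(1)}\otimes h_{(2)}\cdot m,
\]
where $M$ carries the left $A_r$-action $a\cdot m:=s_r(a)\cdot m$ and $\calH$ the bimodule structures \eqref{H-comod} resp.\ \eqref{bimod-lmod}. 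Since $h\cdot(1\otimes m)=h_{(1)}\otimes h_{(2)}\cdot m$ by \eqref{mod-tensor}, the recursion becomes $\varphi_n=\gamma_{C^{n-1}(\calH)}\circ(\id_\calH\otimes\varphi_{n-1})$. The recursion also yields $\varphi_{n-1}((s_r(a)h)\otimes\cdots)=s_r(a)\cdot\varphi_{n-1}(h\otimes\cdots)$, so $\varphi_{n-1}$ is a morphism of left $A_r$-modules and $\id_\calH\otimes\varphi_{n-1}$ is well defined over $\otimes_{A_r}$. Hence, granting that each $\gamma_M$ is a $k$-linear isomorphism, the inductive hypothesis immediately gives that $\varphi_n$ is a well-defined isomorphism.

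Everything therefore reduces to the one-step statement, which is the Hopf-Galois isomorphism of \cite[Thm.\ 3.5]{Schau:DADOQGHA} together with the antipodal inverse of \cite{BoeSzl:HAWBAAIAD}: $\gamma_M$ is bijective with inverse
\[
\gamma_M^{-1}\colon \calH\otimes_{A_l}M\longrightarrow \calH\otimes_{A_r}M,\qquad g\otimes m\longmapsto g^{(1)}\otimes S(g^{(2)})\cdot m,
\]
where, crucially, the \emph{right} coproduct $\Delta_r$ enters. I would first record well-definedness on the two balanced tensor products: for $\gamma_M$ one uses $s_r(a)=t_l(\theta^{-1}(a))$ from \eqref{Subr} together with the fact that $\Delta_l$ is an algebra map into the Takeuchi product, which gives $\Delta_l(h\,s_r(a))=h_{(1)}\otimes h_{(2)}s_r(a)$; for $\gamma_M^{-1}$ one invokes the antipode relation of Definition \ref{HAlgd} and the Takeuchi property of $\Delta_r$.

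The core is that both composites are the identity. For $\gamma_M\circ\gamma_M^{-1}$ one computes $(g^{(1)})_{(1)}\otimes (g^{(1)})_{(2)}S(g^{(2)})\cdot m$, rewrites it by twisted coassociativity \eqref{TwCoAssoc} as $g_{(1)}\otimes (g_{(2)})^{(1)}S((g_{(2)})^{(2)})\cdot m$, collapses the second leg to $s_l(\eps_l(g_{(2)}))$ via the antipode axiom $\mu_\calH(\id_\calH\otimes S)\Delta_r=s_l\eps_l$ of \eqref{TwAp}, and finishes by transporting this scalar across $\otimes_{A_l}$ and applying the left counit identity $t_l(\eps_l(g_{(2)}))g_{(1)}=g$. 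The opposite composite $\gamma_M^{-1}\circ\gamma_M$ is the mirror image: the other half of \eqref{TwCoAssoc} interchanges $\Delta_l$ and $\Delta_r$, the axiom $\mu_\calH(S\otimes\id_\calH)\Delta_l=s_r\eps_r$ of \eqref{TwAp} collapses a leg to $s_r(\eps_r(h^{(2)}))$, and the \emph{right} counit identity $h^{(1)}s_r(\eps_r(h^{(2)}))=h$ closes the computation after moving $s_r(\eps_r(h^{(2)}))$ across $\otimes_{A_r}$.

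I expect the main obstacle to be organisational rather than conceptual. The forward map is manufactured from the \emph{left} coproduct whereas its inverse must be written with the \emph{right} coproduct, and the passage between them is possible only through twisted coassociativity \eqref{TwCoAssoc}; simultaneously one must keep the $(A_l,A_l)$- and $(A_r,A_r)$-bimodule structures \eqref{bimod-lmod}, \eqref{H-comod} apart and exploit the identifications $s_r=t_l\circ\theta^{-1}$ and $t_l=s_r\circ\theta$ that let scalars migrate between the two balanced tensor products. Once the one-step isomorphism $\gamma_M$ has been established with these identities, the inductive assembly of $\varphi_n$ — including its well-definedness on the iterated $\otimes_{A_r}$ — is routine.
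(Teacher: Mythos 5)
Your proposal is correct: the factorisation $\varphi_n=\gamma_{C^{n-1}(\calH)}\circ(\id_\calH\otimes\varphi_{n-1})$ is exactly what the recursion says, your one-step inverse $g\otimes m\mapsto g^{(1)}\otimes S(g^{(2)})\cdot m$ is the right one, and the composite computations (twisted coassociativity \eqref{TwCoAssoc}, the two antipode axioms \eqref{TwAp}, then the left resp.\ right counit identities after sliding $s_l\eps_l$ resp.\ $s_r\eps_r$ across the balanced tensor product) are precisely the identities that make both composites collapse to the identity; the well-definedness checks via $s_r=t_l\circ\theta^{-1}$ and $S(t_l(a)h)=S(h)s_l(a)$ are also the right ones. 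The route differs from the paper's in packaging rather than substance. The paper writes closed formulas for both $\varphi_n$ (the expanded product form) and its inverse $\psi_n\colon h_1\otimes\cdots\otimes h_n\mapsto h_1^{(1)}\otimes S(h_1^{(2)})h_2^{(1)}\otimes\cdots\otimes S(h_{n-1}^{(2)})h_n$, factors them oppositely as $\varphi_{n+1}=(\id\otimes\varphi_n)(\varphi_2\otimes\id^{\otimes n-1})$ and $\psi_{n+1}=(\psi_2\otimes\id^{\otimes n-1})(\id\otimes\psi_n)$, and checks mutual inverseness by induction; the base case $(\varphi_2,\psi_2)$ is the same Schauenburg/B\"ohm--Szlach\'anyi pair that you call $(\gamma_\calH,\gamma_\calH^{-1})$. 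Your version promotes that base case to arbitrary module coefficients $\gamma_M$, $M\in\mathsf{Mod}(\calH)$ --- essentially the translation-map (left Hopf algebroid) property of Remark \ref{hopf-times} with coefficients --- after which the induction is a pure composite-of-isomorphisms argument and no closed formulas are ever needed. What the paper's choice buys is the explicit inverse formula itself, which is reused later (e.g.\ in the chain map $\varphi_n^{-1}\circ n!\,{\rm Alt}$ for Lie--Rinehart algebras, and in the \'etale groupoid picture where $\varphi_n$ and $\psi_n$ are identified with push-forwards of the maps $\tilde\varphi_n,\tilde\psi_n$ on the groupoid level); what your choice buys is logical economy and a statement that is more obviously functorial in the coefficients.
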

\begin{proof}
The explicit formula for the Hopf-Galois map is given by
\begin{equation*}
\label{Hin}
h^1  \otimes  \cdots \otimes h^n \mapsto   h^1_{(1)} \otimes h^1_{(2)}
h^2_{(1)} \otimes h^1_{(3)} h^2_{(2)} h^3_{(1)} \otimes
\cdots \otimes h^1_{(n)} h^2_{(n-1)} \cdots h^{n-1}_{(2)}h^n.
\end{equation*}
Define its inverse $\varphi_n^{-1}:C^n(\calH)\to C_n(\calH)$ by
\begin{equation*}
\label{Her}
h_1 \otimes \cdots \otimes h_n \mapsto  h^{(1)}_1 \otimes S(h^{(2)}_1)  h^{(1)}_2  \otimes
S(h^{(2)}_2)   h^{(1)}_3  \otimes \cdots  \otimes
S(h^{(2)}_{n-1})  h_n.
\end{equation*}
To check that this is indeed an inverse, remark that one can decompose
$$
\varphi_{n+1} = (\id \otimes \varphi_n) \, (\varphi_{2} \otimes
\id^{\otimes n -1}) \quad \mbox{and} \quad \psi_{n+1} = (\psi_2 \otimes
\id^{\otimes n-1})   \, (\id \otimes \psi_n),
$$
and one easily verifies by induction that $\varphi_{n+1}$ and $\psi_{n+1}$ are
mutually inverse.
\end{proof}
\noindent To prove our main theorem about cyclic duality, we also need the
inverses of the 
cyclic operators on $C^\bull(\calH)$ and $C_\bull(\calH)$. Since
we assume $S$ to be invertible, we have:
\begin{lemma}
Let $n\geq1$.
The inverse of the cocyclic operator $\tau_n$  on $C^n(\calH)$ in
\eqref{CyclicHopf} is 
given 
by
\begin{equation}
\label{CyclHopfInv}
\tau^{-1}_n(h^1 \otimes \cdots \otimes h^n) = 
S^{-1}(h^n) \cdot(1 \otimes h^1
\otimes \cdots \otimes h^{n-1}).
\end{equation}
Likewise, the cyclic operator $t_n$ on $C_n(\calH)$ has inverse given by
\begin{equation}
\label{DCyclHopfInv}
t^{-1}_n(h_1 \otimes\cdots \otimes h_n) =
h_2^{(1)}  \otimes \cdots \otimes h_n^{(1)} \otimes
S(h_1 h_2^{(2)} \cdots h_n^{(2)})
\end{equation}
\end{lemma}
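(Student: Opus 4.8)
The plan is to prove each of the two formulas by exhibiting it as a genuine two-sided inverse, using throughout that $S$ (and hence $S^2$) is invertible.

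For the first formula \eqref{CyclHopfInv} I would avoid a direct telescoping computation and instead exploit the intertwiner $\overline\Psi_{\scriptscriptstyle{\rm coinv}}\colon B^\bull(\calH)\to C^\bull(\calH)$ already produced in the previous Proposition, together with its section $\Phi_{\scriptscriptstyle{\rm coinv}}$ satisfying $\overline\Psi_{\scriptscriptstyle{\rm coinv}}\circ\pi\circ\Phi_{\scriptscriptstyle{\rm coinv}}=\id$. On $B^\bull(\calH)$ the cocyclic operator $\tau^B_n$ from \eqref{CoACOp} is the twisted rotation $h^0\otimes\cdots\otimes h^n\mapsto h^1\otimes\cdots\otimes h^n\otimes S^2(h^0)$, which is manifestly invertible with inverse $g^0\otimes\cdots\otimes g^n\mapsto S^{-2}(g^n)\otimes g^0\otimes\cdots\otimes g^{n-1}$; likewise $\tau_n$ on $C^\bull(\calH)$ is invertible by Theorem \ref{CyclThm} (since $\tau_n^{n+1}=(S^2)^{\otimes n}$). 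From the intertwining $\overline\Psi_{\scriptscriptstyle{\rm coinv}}\tau^B_n=\tau_n\overline\Psi_{\scriptscriptstyle{\rm coinv}}$ one gets $\tau_n^{-1}\overline\Psi_{\scriptscriptstyle{\rm coinv}}=\overline\Psi_{\scriptscriptstyle{\rm coinv}}(\tau^B_n)^{-1}$, and evaluating both sides on the class of $1\otimes h^1\otimes\cdots\otimes h^n=\pi\Phi_{\scriptscriptstyle{\rm coinv}}(h^1\otimes\cdots\otimes h^n)$ while using $\overline\Psi_{\scriptscriptstyle{\rm coinv}}\pi\Phi_{\scriptscriptstyle{\rm coinv}}=\id$ yields
\[
\tau_n^{-1}(h^1\otimes\cdots\otimes h^n)=\overline\Psi_{\scriptscriptstyle{\rm coinv}}\big(S^{-2}(h^n)\otimes 1\otimes h^1\otimes\cdots\otimes h^{n-1}\big)=S^{-1}(h^n)\cdot(1\otimes h^1\otimes\cdots\otimes h^{n-1}),
\]
which is exactly \eqref{CyclHopfInv}. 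Alternatively one may simply check that the right-hand side of \eqref{CyclHopfInv} is a one-sided inverse of $\tau_n$ and invoke its invertibility.

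For the second formula \eqref{DCyclHopfInv} I would verify directly that the proposed operator is inverse to $t_n$ from \eqref{DualCyc}, computing both composites $t_n^{-1}\circ t_n$ and $t_n\circ t_n^{-1}$ on $C_\bull(\calH)$. Take $t_n^{-1}\circ t_n$: after substituting the two formulas, each tensor slot carries an iterated mixed coproduct, e.g.\ $\Delta_r$ applied to a factor $h^i_{(1)}$. I would first use twisted coassociativity \eqref{TwCoAssoc} to disentangle these into $(h^i)^{(1)}\otimes (h^i)^{(2)}_{(1)}\otimes (h^i)^{(2)}_{(2)}$, so the first index lands in slot $i$ while the other two feed the distinguished slot. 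Using that $\Delta_l$ is an algebra map and $S$ is anti-multiplicative, the distinguished slot collapses to $\mu_\calH(S\otimes\id)\Delta_l(Q)\cdot h^n$ with $Q=(h^1)^{(2)}\cdots(h^{n-1})^{(2)}$, which by the antipode axiom \eqref{TwAp} equals $s_r\eps_r(Q)\,h^n$; the composite becomes $(h^1)^{(1)}\otimes\cdots\otimes (h^{n-1})^{(1)}\otimes s_r\eps_r(Q)\,h^n$. The composite $t_n\circ t_n^{-1}$ is handled symmetrically, now using $S^{-1}$, the multiplicativity of $\Delta_r$, and the relation $\mu_{\calH^{\op}}(S^{-1}\otimes\id)\Delta_r=t_l\eps_l$ from Lemma \ref{SCoUConv}, which collapses the distinguished slot to a $t_l\eps_l(\cdots)$-term.

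The main obstacle is this final reconstitution step. The surviving correction term is an element of $s_r(A_r)$ (respectively of $t_l(A_l)=s_r(\theta(A_l))$, rewritten via \eqref{Subr}), so it can be transported across the $\otimes_{A_r}$-factors by the balancing $h\,s_r(a)\otimes h'=h\otimes s_r(a)\,h'$; the difficulty is that it arises as $\eps_r$ (resp.\ $\eps_l$) of a \emph{product} of the $(h^i)^{(2)}$, which in the bialgebroid setting does not split into scalars factor by factor. I expect to resolve it by peeling off one tensor factor at a time and applying the counit relations of the right (resp.\ left) bialgebroid, i.e.\ $h^{(1)}s_r(\eps_r h^{(2)})=h$ and its companion, so that each $(h^i)^{(1)}$ recombines with the corresponding counit contribution to give back $h^i$; the group-algebra case, where $\eps_r$ is genuinely multiplicative, already exhibits the mechanism. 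Carrying out this bookkeeping for general $n$, most cleanly by induction on the number of factors, is routine but is precisely where the care is needed.
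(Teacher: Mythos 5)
Your proposal splits into two halves of different quality, and both differ from the paper's argument, which is a single induction on $n$: the paper introduces the degree-two bijection $\tilde{\varphi}(h\otimes h'):=h_{(1)}h'\otimes h_{(2)}$ with inverse $\tilde{\varphi}^{-1}(h\otimes h')={h'}^{(2)}\otimes S^{-1}({h'}^{(1)})h$, observes the factorisation $\tau_{n+1}=(\id^{\otimes n-1}\otimes\tilde{\varphi})(\tau_n\otimes\id)$, hence $\tau^{-1}_{n+1}=(\tau^{-1}_n\otimes\id)(\id^{\otimes n-1}\otimes\tilde{\varphi}^{-1})$, and treats $t_n$ identically via $\tilde{\psi}(h\otimes h'):=h_{(2)}h'\otimes h_{(1)}$, $\tilde{\psi}^{-1}(h\otimes h')={h'}^{(1)}\otimes S({h'}^{(2)})h$, so that both \eqref{CyclHopfInv} and \eqref{DCyclHopfInv} drop out by unwinding the recursions, with well-definedness inherited from the degree-two case. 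Your derivation of \eqref{CyclHopfInv} through the coinvariant intertwiner $\overline{\Psi}_{\scriptscriptstyle{\rm coinv}}$ is a legitimate alternative and a nice reuse of \S\ref{coinvariants}; the computation $\overline{\Psi}_{\scriptscriptstyle{\rm coinv}}\bigl(S^{-2}(h^n)\otimes 1\otimes h^1\otimes\cdots\otimes h^{n-1}\bigr)=S^{-1}(h^n)\cdot(1\otimes h^1\otimes\cdots\otimes h^{n-1})$ is correct. What is not ``manifest,'' however, is that the rotation $g^0\otimes\cdots\otimes g^n\mapsto S^{-2}(g^n)\otimes g^0\otimes\cdots\otimes g^{n-1}$ descends to the quotient $B^n(\calH)=A_r\otimes_{A_l^{\rm e}}C^{n+1}(\calH)$: since $S^{-2}$ twists the source and target maps, one needs the consequences $S^{-2}t_l'=t_l$ and $S^{-2}s_l=s_l\circ\phi^{-1}\theta$ of \eqref{SHomId} to see that it respects both the internal $\otimes_{A_l}$-relations and the ideal $I^n$; the same kind of check underlies your appeal to Theorem \ref{CyclThm} for invertibility of $\tau_n$, since $(S^{-2})^{\otimes n}$ must be well defined on $C^n(\calH)$.

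For \eqref{DCyclHopfInv} there is a genuine gap. Your collapse of the distinguished slot of $t_n^{-1}t_n$ to $s_r\eps_r(Q)\,h^n$ with $Q=(h^1)^{(2)}\cdots(h^{n-1})^{(2)}$ is right, but the remaining identity
\begin{equation*}
(h^1)^{(1)}\otimes\cdots\otimes(h^{n-1})^{(1)}\otimes s_r\bigl(\eps_r(Q)\bigr)h^n
= h^1\otimes\cdots\otimes h^n \quad \mbox{in } C_n(\calH)
\end{equation*}
is not bookkeeping---it \emph{is} the content of the lemma, and you additionally never check that the formula \eqref{DCyclHopfInv} is well defined over $\otimes_{A_r}$ at all. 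The peeling you propose does succeed: writing $Q=P\,(h^{n-1})^{(2)}$, use the right counit axiom $\eps_r(P(h^{n-1})^{(2)})=\eps_r\bigl(t_r(\eps_r P)(h^{n-1})^{(2)}\bigr)$, then the Takeuchi condition for $\gD_r$, namely $h^{(1)}\otimes t_r(a)h^{(2)}=s_r(a)h^{(1)}\otimes h^{(2)}$ (applied through the balanced map $(u,v)\mapsto u\otimes s_r(\eps_r v)h^n$), and finally the comonoid identity $(h^{n-1})^{(1)}s_r\eps_r\bigl((h^{n-1})^{(2)}\bigr)=h^{n-1}$ together with the $\otimes_{A_r}$-balancing; induction on the number of factors finishes. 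But once this is written out, the one-factor peeling step is exactly the paper's recursion through $\tilde{\psi}$ and $\tilde{\psi}^{-1}$, so your second half, when completed, reproduces the paper's proof; as it stands, the proposal asserts rather than establishes \eqref{DCyclHopfInv}.
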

\begin{proof}
This can be verified directly, but we shall use induction. For $n=1$, by \eqref{CyclHopfInv} we have $\tau_1^{-1}=S^{-1}$, 
and the statement is clear. For $n\geq 2$, define the map
\[
\tilde{\varphi}(h\otimes h'):=h_{(1)}h'\otimes h_{(2)}.
\]
This defines a bijection of
$\calH\otimes_{A_l}\calH\!\in\!\mathsf{Comod}_\elle(\calH_l)$ to
$C^2(\calH)\!\in\!\mathsf{Mod}(\calH)$, with inverse
\[
\tilde{\varphi}^{-1}(h\otimes h')={h'}^{(2)}\otimes S^{-1}({h'}^{(1)})h.
\]
With these maps, one has 
\begin{eqnarray*}
\tau_{n+1} &=& ( \id^{\otimes n-1} \otimes \tilde{\varphi}) \ (\tau_n \otimes
\id) \\
\tau^{-1}_{n+1} &=& (\tau^{-1}_n \otimes \id) \ ( \id^{\otimes n-1} \otimes \tilde{\varphi}^{-1}).
\end{eqnarray*}
This proves the first statement. As for the second part, introduce
the maps
\[
\begin{split}
\tilde{\psi}(h\otimes h')&:=h_{(2)}h'\otimes h_{(1)}\\
\tilde{\psi}^{-1}(h\otimes h')&:={h'}^{(1)}\otimes S({h'}^{(2)})h.
\end{split}
\]
This time $\tilde{\psi}$ maps the tensor product $\calH \otimes_{A_r}
\calH \!\in\! \mathsf{Comod}_\erre(\calH_r)$ to the tensor \mbox{product}
$$
 \calH \otimes_{A_r} \calH :=  \calH \otimes_k \calH/{\rm span}_k
 \{ t^r(a) h \otimes_k h' -   h \otimes_k s^r (\phi \theta^{-1}(a)) h', \ a
 \in A_r  \},
$$
and one easily checks that $\tilde{\psi}^{-1}$ is its inverse, indeed.
Then one has
\begin{eqnarray*}
t_{n+1} &=& (t_n \otimes \id) \ ( \id^{\otimes n-1} \otimes \tilde{\psi}) \\
t^{-1}_{n+1} &=& (\id^{\otimes n-1} \otimes \tilde{\psi}^{-1}) \ (t^{-1}_n \otimes \id),
\end{eqnarray*}
and with this one proves the second equality.
\end{proof}

\begin{theorem}
\label{cyclic-duality}
Let $\calH$ be a Hopf algebroid with invertible antipode. The
Hopf-Galois map $\varphi:C_\bull(\calH) \to C^\bull(\calH)$ 
identifies $C_\bull(\calH)$ as the cyclic dual of the para-cocyclic module $C^\bull(\calH)$ of Theorem
{\rm \ref{CyclThm}}.
\end{theorem}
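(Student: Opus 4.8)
The plan is to build on Lemma \ref{hopf-galois}, which already shows that each $\varphi_n\colon C_n(\calH)\to C^n(\calH)$ is an isomorphism of $k$-modules; what remains is to check that $\varphi$ intertwines the structure maps $d_i,s_i,t_n$ of $C_\bull(\calH)$ from \rmref{DualHH}--\rmref{DualCyc} with the cyclic-dual operators of the para-cocyclic module $C^\bull(\calH)$. Spelling the latter out from the definition of cyclic duality, these are the faces $\check d_i=\sigma_{i-1}$ for $1\le i\le n$ and $\check d_0=\sigma_{n-1}\tau_n$, the degeneracies $\check s_i=\delta_i$ for $0\le i\le n-1$, and the cyclic operator $\check t_n=\tau_n^{-1}$, with $\delta_\bull,\sigma_\bull,\tau_n$ as in \rmref{coface-hochschild}--\rmref{CyclicHopf} and $\tau_n^{-1}$ as in \rmref{CyclHopfInv}. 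Establishing these identities simultaneously exhibits $C_\bull(\calH)$ as the cyclic dual and, in particular, reproves that it is a (para-)cyclic module.

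I would run the verification off the explicit formula for $\varphi_n$ from the proof of Lemma \ref{hopf-galois}, together with the decomposition $\varphi_{n+1}=(\id\otimes\varphi_n)(\varphi_2\otimes\id^{\otimes n-1})$ and the analogous decompositions of $t_{n+1}$ and $\tau_{n+1}^{-1}$ in terms of $\tilde\psi$ and $\tilde\varphi^{-1}$. The degeneracies are the most transparent: since $s_i$ merely inserts a unit into the $i$-th slot while $\delta_i$ inserts a unit or applies $\Delta_l$, feeding the explicit formula into $\varphi_{n+1}s_i=\delta_i\varphi_n$ reduces the claim to coassociativity and the counit axioms, both sides turning $h^1\otimes\cdots$ into the same staircase with one extra left coproduct inserted in the $i$-th row. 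For the faces with $1\le i\le n$, the identity $\varphi_{n-1}d_i=\sigma_{i-1}\varphi_n$ encodes the duality trading the adjacent products $h^ih^{i+1}$ (and the terminal counit $\epsilon_r(S^{-1}h^n)$) on the comodule side for the counit contractions $\epsilon_l$ on the module side; this follows by a direct computation from the explicit formula, using counitality together with \rmref{SHomId}.

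The crux is the intertwining of the cyclic operators, $\varphi_nt_n=\tau_n^{-1}\varphi_n$. The base case is immediate: for $n=1$ both sides equal $S^{-1}$, and for $n=2$ one computes that $\varphi_2t_2$ and $\tau_2^{-1}\varphi_2$ both send $h^1\otimes h^2$ to $w_{(1)}\otimes w_{(2)}\,h^1_{(1)}$ with $w:=S^{-1}(h^1_{(2)}h^2)$. For the inductive step I would substitute the three decompositions above into both sides and reduce the claim to a compatibility at the level of the two-variable building blocks $\varphi_2$, $\tilde\varphi$, and $\tilde\psi$; this is exactly where the antipode identities of Lemma \ref{SCoUConv}, the relations \rmref{SHomId} and \rmref{TwAp}, and twisted coassociativity \rmref{TwCoAssoc} all enter, since one must commute $S^{-1}$ (an anti-coalgebra map by \rmref{bleistift}) past the interleaved left coproducts produced by the staircase and then collapse the resulting $s_r\epsilon_r$ and $s_l\epsilon_l$ terms through the antipode axioms. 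The remaining face identity $\varphi_{n-1}d_0=\sigma_{n-1}\tau_n\varphi_n$ is then obtained by combining the cyclic-operator identity just proved with the $i=n$ case $\varphi_{n-1}d_n=\sigma_{n-1}\varphi_n$, together with a direct check on $C_\bull(\calH)$ using the left $A_r$-action \rmref{right-A}.

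I expect the cyclic-operator step to be the main obstacle. Both $t_n$ and $\varphi_n$ spread a single tensor factor across all the others through iterated coproducts, so after applying $S^{-1}$ the bookkeeping of Sweedler indices is delicate, and the cancellations hinge on repeatedly recognising the antipode-counit patterns of Lemma \ref{SCoUConv}. The subtlest point is that $\varphi$ is assembled from the left whereas $t_n$ and $\tau_n^{-1}$ act on the right-most factor first, so lining up the ``new'' tensor slot on the two sides in the inductive decompositions is what will require the most care.
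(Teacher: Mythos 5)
Your framework is right: you correctly identify the dual operators $\check d_i=\sigma_{i-1}$, $\check d_0=\sigma_{n-1}\tau_n$, $\check s_i=\delta_i$, $\check t_n=\tau_n^{-1}$, and the reduction to intertwining identities; your treatment of the degeneracies and inner faces matches what the paper (silently) leaves to the reader, and your route to the $d_0$ identity via $d_0=d_nt_n^{-1}$ is a legitimate variant of the paper's direct computation of $\sigma_{n-1}\tau_n\varphi_n$, provided the cyclic identity is already in hand. The gap is precisely there: your plan for the central identity $\varphi_n t_n=\tau_n^{-1}\varphi_n$ is an induction whose inductive step you do not carry out, and it is the step most likely to fail as stated. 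The decomposition $\varphi_{n+1}=(\id\otimes\varphi_n)(\varphi_2\otimes\id^{\otimes n-1})$ attaches the new tensor slot on the \emph{left}, while $t_{n+1}=(t_n\otimes\id)(\id^{\otimes n-1}\otimes\tilde\psi)$ and $\tau_{n+1}^{-1}=(\tau_n^{-1}\otimes\id)(\id^{\otimes n-1}\otimes\tilde\varphi^{-1})$ attach it on the \emph{right}; substituting these into $\varphi_{n+1}t_{n+1}$ produces compositions like $(\varphi_2\otimes\id^{\otimes n-1})(t_n\otimes\id)$ in which the induction hypothesis (which involves $\varphi_n$, not $\varphi_2$) never becomes applicable, because the maps share tensor slots and do not slide past each other. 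You flag this mismatch yourself as ``the subtlest point,'' but flagging it is not resolving it.

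The missing idea -- and the paper's actual proof -- is that no induction and no antipode gymnastics are needed. The recursive definition $\varphi_n(h^1\otimes\cdots\otimes h^n)=h^1\cdot\bigl(1\otimes\varphi_{n-1}(h^2\otimes\cdots\otimes h^n)\bigr)$ is exactly adapted to $t_n$, whose output places the single new element $S^{-1}(h^1_{(2)}\cdots h^{n-1}_{(2)}h^n)$ in the \emph{first} slot: feeding $t_n$ into this recursion gives $S^{-1}(h^1_{(2)}\cdots h^{n-1}_{(2)}h^n)\cdot\bigl(1\otimes\varphi_{n-1}(h^1_{(1)}\otimes\cdots\otimes h^{n-1}_{(1)})\bigr)$, and expanding $\varphi_{n-1}$ by the staircase formula and relabelling Sweedler indices by coassociativity yields verbatim what $\tau_n^{-1}$ (via \rmref{CyclHopfInv}, peeling off the last staircase entry) produces from $\varphi_n(h^1\otimes\cdots\otimes h^n)$. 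In other words, your $n=2$ computation \emph{is} the general argument in miniature -- it generalises word for word, with coassociativity alone; Lemma \plref{SCoUConv}, \rmref{SHomId}, \rmref{bleistift} and the anti-coalgebra property of $S^{-1}$ play no role here (the antipode axiom \rmref{TwAp} enters only in the $d_0$/face verification). So your proposal overcomplicates the key step and, as written, does not close it.
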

\begin{proof}
This is now a straightforward verification:
\[
\begin{split}
\tau^{-1}_n \varphi_n( &h^1 \otimes \cdots \otimes h^n) = \\
&=
\tau^{-1}_n(h^1_{(1)} \otimes h^1_{(2)}
h^2_{(1)} \otimes h^1_{(3)} h^2_{(2)} h^3_{(1)} \otimes 
\cdots \otimes  h^1_{(n)} h^2_{(n-1)} \cdots h^{n-1}_{(2)}h^n) \\
&= S^{-1}(h^1_{(n)} \cdots h^{n-1}_{(2)} h^n)\cdot
\left(1\otimes   h^1_{(1)} \otimes \cdots  \otimes
h^1_{(n-1)}h^2_{(n-2)} \cdots h^{n-1}_{(1)}\right),
\end{split}
\end{equation*}
and by coassociativity
\begin{equation*}
\begin{split}
 & \varphi_n t_n( h^1 \otimes\cdots\otimes h^n) = \varphi_n(S^{-1}(
  h^{1}_{(2)} \cdots h^{n-1}_{(2)}  h^{n}) \otimes  h^{1}_{(1)} 
\otimes h^{2}_{(1)} \otimes \cdots \otimes h^{n-1}_{(1)}) \\
&= S^{-1}(h^1_{(2)} \cdots h^{n-1}_{(2)} h^n)\cdot 
\left(1\otimes \varphi_{n-1}\left(h^{1}_{(1)} \otimes h^{2}_{(1)} \otimes \cdots \otimes h^{n-1}_{(1)})\right)\right)\\
&=S^{-1}(h^1_{(n)} \cdots h^{n-1}_{(2)} h^n)\cdot 
\left(1\otimes   h^1_{(1)} \otimes \cdots  \otimes   h^1_{(n-1)}h^2_{(n-2)} \cdots h^{n-1}_{(1)}\right).
\end{split}
\]
Hence, $\varphi_n \circ t_n = \tau^{-1}_n \circ \varphi_n$
or equivalently $t_n = \psi_n \circ \tau^{-1}_n \circ \varphi_n$. 
In the same fashion,
\begin{equation*}
\begin{split}
\gs_{n-1} & \tau_n  \varphi_n(  h^1 \otimes  \cdots  \otimes  h^n) 
=\gs_{n-1}\left(S(h^1_{(1)})\cdot\left(h^1_{(2)}h^2_{(1)}\otimes
\cdots\otimes h^1_{(n)}\cdots h^{n-1}_{(2)}h^n\otimes 1\right)\right)
\\
&= S(h^1_{(1)})\cdot \left(h^1_{(2)}h^2_{(1)}\otimes\cdots\otimes h^1_{(n)}\cdots h^{n-1}_{(2)}h^n\right)\\
&= \big(S(h^1_{(1)})h^1_{(2)}h^2\big)\cdot \left(1\otimes\varphi_{n-2}(h^3\otimes\cdots\otimes h^n)\right)\\
&= \big(s_r\epsilon_r(h^1)h^2\big)\cdot \left(1\otimes\varphi_{n-2}(h^3\otimes\cdots\otimes h^n)\right)\\
&=\varphi_{n-1}d_0(h^1\otimes\cdots\otimes h^n).
\end{split}
\end{equation*}
The remaining identities are left to the reader.
\end{proof}

\begin{corollary}
 $C_\bull(\calH)$ is para-cyclic, and cyclic if and only if  $S^2 = \id$.
\end{corollary}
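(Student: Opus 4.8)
The plan is to deduce the corollary directly from the two results already established: Theorem~\ref{cyclic-duality}, which via the Hopf-Galois maps $\varphi_n$ (isomorphisms of $k$-modules by Lemma~\ref{hopf-galois}) identifies $C_\bull(\calH)$ with the cyclic dual of the para-cocyclic module $C^\bull(\calH)$, and Theorem~\ref{CyclThm}, which records exactly when $C^\bull(\calH)$ is genuinely cocyclic. Since each $\varphi_n$ is an isomorphism intertwining the face, degeneracy and cyclic operators on the two sides (this is what the proof of Theorem~\ref{cyclic-duality} checks), every structural identity transports back and forth along $\varphi$.

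For the para-cyclic claim I would argue as follows. The operators $(\delta_\bull,\sigma_\bull,\tau_\bull)$ make $C^\bull(\calH)$ a para-cocyclic module with $\tau_n$ invertible (its inverse is~\rmref{CyclHopfInv}), so the cyclic-duality construction recalled in~\S\ref{dual} yields a para-cyclic module $\check{C}^\bull(\calH)$. By Theorem~\ref{cyclic-duality} the maps $\varphi_n$ identify $C_\bull(\calH)$ with $\check{C}^\bull(\calH)$ together with all of its operators, so $C_\bull(\calH)$ inherits a para-cyclic structure. This step uses only invertibility of $S$, our standing assumption, and not $S^2=\id$.

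For the cyclic condition I would use the intertwining relation $\varphi_n\circ t_n=\tau_n^{-1}\circ\varphi_n$ from the proof of Theorem~\ref{cyclic-duality}. Iterating and using that $\varphi_n$ is invertible, $t_n^{\,n+1}=\id$ on $C_n(\calH)$ holds if and only if $\tau_n^{\,n+1}=\id$ on $C^n(\calH)$; and Theorem~\ref{CyclThm} gives
\[
\tau_n^{\,n+1}(h^1\otimes\cdots\otimes h^n)=S^2(h^1)\otimes\cdots\otimes S^2(h^n),
\]
which is the identity for all $n$ exactly when $S^2=\id$ (the case $n=1$ already forcing $S^2=\id_\calH$ for the converse). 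Hence $C_\bull(\calH)$ is cyclic if and only if $S^2=\id$. The proof presents no real obstacle beyond bookkeeping; the one point worth a sentence of care is that the para-cyclic \emph{axioms}, as opposed to the single relation $t_n^{\,n+1}=\id$, also survive transport along $\varphi$, which holds simply because $\varphi$ is an isomorphism intertwining all simplicial and cyclic operators.
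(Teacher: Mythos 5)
Your proposal is correct and follows essentially the same route as the paper, which states the corollary as an immediate consequence of Theorem \ref{cyclic-duality}: the Hopf-Galois isomorphism transports the para-cyclic structure of the cyclic dual of $C^\bull(\calH)$ onto $C_\bull(\calH)$, and cyclicity reduces via $\varphi_n\circ t_n=\tau_n^{-1}\circ\varphi_n$ and the formula $\tau_n^{n+1}=S^2\otimes\cdots\otimes S^2$ of Theorem \ref{CyclThm} to the condition $S^2=\id$. Your explicit use of that formula to get \emph{both} directions of the equivalence (rather than only the ``cocyclic implies cyclic'' direction recorded in the paper's review of cyclic duality) is a sensible piece of bookkeeping, not a different argument.
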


\begin{definition}
In case $S^2 = \id$, let $\calH^\natural_\bull := C_\bull(\calH)$ 
denote the cyclic module equipped with the operators
  \rmref{DualHH}--\rmref{DualCyc}. Its respective Hochschild and
  (periodic) cyclic homology groups are denoted by
  $HH_\bull(\calH)$, $HC_\bull(\calH)$ and $HP_\bull(\calH)$, and
  referred to as {\em dual Hopf-cyclic} homology groups.
\end{definition}

\subsection{The approach via invariants}
\label{invariants}
As is clear from the explicit formulae, the dual cyclic homology is
closely related to the underlying algebra 
structure of the Hopf algebroid. 
To compare this homology to the usual cyclic homology of 
algebras, we use the following approach, which is dual to that of Section \ref{coinvariants}. Remarkably,
it only works in some special cases.

Let $\calH = (\calH_l,\calH_r, S)$ be a Hopf algebroid.
The standard cyclic module of $\calH$ as a $k$-algebra \cite{FeiTsy:AKT} is defined by
$
\calH_\bull^{{\scriptscriptstyle{{\rm alg}}},
  \sharp}:=\calH^{\otimes_k(\bullet +1)},
$
with face maps
\[
d_i(h^0  \otimes   \cdots   \otimes   h^n)  =
\begin{cases}
h^0  \otimes   \cdots    \otimes h^i h^{i+1} \otimes \cdots
  h^n & \mbox{if}
  \ 0 \leq i \leq n-1,
 \\
h^n h^0  \otimes  \cdots   \otimes   h^{n-1} & \mbox{if} \ i = n, \end{cases}
\]
and degeneracies
\[
s_i(h^0  \otimes   \cdots   \otimes   h^n)  = h^0  \otimes   \cdots \otimes   h^i 
\otimes   1  \otimes   h^{i+1}
 \otimes  \cdots   \otimes   h^n \quad \mbox{if} \ 0 \leq i \leq n.  
\]
Finally the cyclic structure is given by
\[
t_n(h^0\otimes\cdots\otimes h^n)=h^n\otimes h^0\otimes\cdots\otimes h^{n-1}.
\]
On the other hand, we have $C_{n+1}(\calH)\in\mathsf{Comod}_\erre(\calH_r)$. 
Recall that underlying the comodule structure is an $(A_r,A_r)$-bimodule, so we can define
\begin{equation*}
\label{suppe}
B_n(\calH):= C_{n+1}(\calH) \otimes_{A_r^{\rm e}} A_r.
\end{equation*}
This space is a quotient of $C_{n+1}(\calH)$ by the $k$-submodule $I_n\subset C_{n+1}(\calH)$
given by
\[
I_n:=\mbox{span}_k\{s_r(a)h^0\otimes\cdots\otimes h^n-h^0\otimes\cdots\otimes h^ns_r(a),~a\in A_r, h^i\in\calH.\}.
\]
One then easily observes that the canonical
projection $\calH_\bull^{{\scriptscriptstyle{{\rm alg}}}, \natural}\to B_\bull(\calH)$ 
equips the latter with the structure of a cyclic module given by the same formulae as above.

By Proposition \ref{inv-comod}, 
we have $C_n(\calH)\cong C_{n+1}(\calH)^{\scriptscriptstyle{\rm inv}}$ via the embedding
\begin{equation}
\label{invariant}
\Psi_{\scriptscriptstyle{\rm inv}}:h_1\otimes\cdots\otimes h_n\mapsto h_1^{(1)}\otimes\cdots\otimes h_n^{(1)}\otimes 
S(h_1^{(2)}\cdots h^{(2)}_n).
\end{equation}
Combined with the canonical projection $C_{n+1}(\calH)\to B_n(\calH)$, 
this leads to a morphism $\overline{\Psi}_{\scriptscriptstyle{\rm inv}}:C_\bull(\calH)\to 
B_\bull(\calH)$. 
Unfortunately, this is not a map of cyclic objects, let alone simplicial objects in general. 
However, there are Hopf algebroids for which this is true, cf.\ \S \ref{etale} for an example.
Also, the left inverse of the embedding above, given as
$\Phi_{\scriptscriptstyle{\rm inv}}:C_{n+1}(\calH)\to C_n(\calH)$, $h^1\otimes\cdots\otimes
h^{n+1} \mapsto h^1\otimes\cdots\otimes h^n\cdot\eps_r (S^{-1} h^{n+1})$ 
does not descend to the quotient $B_n(\calH)$. 
We therefore do not have a commutative diagram as for the coinvariant localisation and the 
cyclic cohomology theory.

\subsection{Hochschild theory with coefficients}
\label{schondreiuhr}
Both the Hochschild cohomology for bialgebroids as well as the dual homology for Hopf algebroids 
are part of a more general theory with coefficients that we now describe.
First, however, we introduce certain resolutions of the base algebras in the categories 
discussed in \S \ref{mod-comod}, 
defined by the left and right bialgebroid structure on $\calH$.

\subsubsection{Cobar resolution}
A straightforward generalisation of 
Theorem A.1.1.3 and Lemma A.1.2.2 
in \cite{Rav:CCASHGOS} to the noncommutative setting (cf.\ Section 2.4 in \cite{Kow:HAATCT})
shows that if $\calH$ is flat as right $A_l$-module \rmref{bimod-lmod}, the category
$\mathsf{Comod}_\elle(\calH_l)$ is abelian and has enough injectives. 
We call a (say, left) $\calH_l$-comodule $N$ {\em cofree} 
if there is a left $A_l$-module $M$ such that $N \cong \calH \otimes_{A_l} M$ as left $\calH_l$-comodules, 
and it is called {\em relative injective} if it is a direct summand in a cofree one.

The {\em cobar} resolution of $A_l$ in the category
$\mathsf{Comod}_\elle(\calH_l)$ 
generalises 
the well-known construction for bialgebras \cite{Doi:HCA} and for commutative bialgebroids in \cite{Rav:CCASHGOS}: 
define the
graded space
\[
\Cobar^n(\calH):=\underbrace{\calH\otimes_{A_l}\cdots\otimes_{A_l}\calH}_{\scriptscriptstyle{n+1
   ~ {\rm times}}},
\]
the tensor product being the one in the category $\mathsf{Mod}(\calH)$. Alternatively, we can view 
this as the cofree left $\calH_l$-comodule generated by $C^\bull(\calH)\in \mathsf{Mod}(\calH)$.
This allows us to view
$\Cobar^\bull(\calH)\in\mathsf{Comod}_\elle(\calH_l)$ by using the left
comultiplication on the first component.
 Introduce the following 
cosimplicial structure on $\Cobar^\bull(\calH)$: first, the coface operators 
$\delta_i':\Cobar^n(\calH)\to\Cobar^{n+1}(\calH)$ are given by
\[
\label{cobar-simpl}
\gd'_i(h^0 \otimes h^1 \otimes \cdots \otimes h^n) = 
\begin{cases}
h^0 \otimes \cdots \otimes \gD_l h^i \otimes \cdots
 \otimes h^n &\mbox{if} \ 
0 \leq i \leq n, 
\\
h^0 \otimes \cdots \otimes h^n \otimes 1 & \mbox{if} \  i = n + 1.
\end{cases} 
\]
The codegeneracies $\sigma_i':\Cobar^n(\calH)\to\Cobar^{n-1}(\calH)$ are:
\[
\gs_i'(h^0 \otimes\cdots \otimes h^n) = h^0 \otimes \cdots \otimes
\eps_l( h^{i+1}) \otimes \cdots \otimes h^n,  \qquad 0 \leq i \leq n-1.
\]
These maps are compatible with the left $\calH_l$-comodule structure
 on $\Cobar^\bull (\calH)$ but {\em not} with the
 left $\calH_l$-module structure.
The left $\calH_l$-coaction on $A_l$ given by the left source map $s_l:A_l\to\calH$ defines a coaugmentation for 
this cosimplicial object in $\mathsf{Comod}_\elle(\calH_l)$,
which yields a cosimplicial resolution of $A_l$: consider the associated cochain complex
\[
A_l\stackrel{s_l}{\longrightarrow} \Cobar^0(\calH)\stackrel{b'}{\longrightarrow}\Cobar^1(\calH)\stackrel{b'}
{\longrightarrow}\ldots,
\]
with differentials
$
b':=\sum_{i=0}^{n+1}(-1)^i\delta_i'.
$
It is easy to check that $b'$ is a morphism of left
$\calH_l$-comodules and that the maps $s^{n-1}: \Cobar^n(\calH) \to  \Cobar^{n-1}(\calH)$ 
given by $h^0 \otimes \cdots \otimes h^n \mapsto
\eps_l(h^0)\cdot h^1 \otimes \cdots \otimes h^n$ and $s^{-1}:
\calH = \Cobar^0(\calH) \to \Cobar^{-1}(\calH) := A_l$, $h \mapsto
\eps_l h$ define a contracting homotopy for the complex
$(\Cobar^\bull(\calH), b')$ over $A_l$, i.e., $s^n \circ b' + b' \circ
s^{n-1} = \id$. In particular, $A_l \stackrel{s_l}{\longrightarrow}
\Cobar^\bull(\calH)$ is a resolution of $A_l$ by cofree (hence relative
injective) left $\calH_l$-comodules: from \rmref{staub} follows
$\ker b'=\{h\in\calH,~\Delta_l(h)= h\otimes 1\} \cong A_l$, hence
exactness in degree zero. 

\subsubsection{The bar resolution}
Analogous to the standard case (see e.g.\ \cite{Wei:AITHA}), 
the bar complex gives 
a resolution of $A_l$ in the category $\mathsf{Mod}(\calH)$ of left
modules over $\calH$. We define 
\[
\mbox{Bar}_n(\calH):=\underbrace{\calH\otimes_{A_r}\cdots\otimes_{A_r}\calH}_{\scriptscriptstyle{n+1~
  {\rm times}}},
\]
where the tensor product is the one in
$\mathsf{Comod}_\erre(\calH_r)$, but we view $\Br_\bull(\calH)$ in $\mathsf{Mod}(\calH)$, 
the left $\calH$-action being given by
left multiplication on the first factor.
 The simplicial structure on this graded $\calH$-module is given by
 the face and degeneracy operators
\begin{equation*}
\begin{array}{rcll}
 d'_i(h^0  \otimes   \cdots   \otimes   h^n) & \!\!\!\!\! =& \!\!\!\!\!
\left\{ \!\!
\begin{array}{l}
h^0  \otimes   \cdots \otimes   h^i h^{i+1}
 \otimes  \cdots   \otimes   h^n \\
h^0  \otimes   \cdots    \otimes  
h^{n-1} \cdot \eps_r (S^{-1} h^n) 
\end{array}\right.  & \!\!\!\!\!\! \begin{array}{l} \mbox{if}
  \ 0 \leq i \leq n-1,  \\ \mbox{if} \ i = n, \end{array} \\
s'_i(h^0  \otimes   \cdots   \otimes   h^n)  & \!\!\!\!\! =& \!\!\!\!\! 
h^0  \otimes   \cdots \otimes   h^i 
\otimes   1  \otimes   h^{i+1}
 \otimes  \cdots   \otimes   h^n, & \!\!\!\!\!\! \begin{array}{l} \phantom{\mbox{if}}
  \ 0 \leq i \leq n. \end{array}  
\end{array}
\end{equation*}
This time these maps are morphisms of left $\calH$-modules, not of comodules.
The augmented bar complex is given by
\[
\ldots \stackrel{d'}{\longrightarrow}
\mbox{Bar}_1(\calH)\stackrel{d'}{\longrightarrow}\mbox{Bar}_0(\calH)
\stackrel{\epsilon_l}{\longrightarrow}A_l,
\]
with chain operator
$
d':=\sum_{i=0}^n(-1)^i d_i'.
$
It is a straightforward check that the bar complex is a contractible
resolution of $A_l$, where the extra degeneracy 
$$
s_{n}:\mbox{Bar}_{n}(\calH)\to\mbox{Bar}_{n+1}(\calH), \quad
s_n(h^0\otimes\cdots\otimes h^{n})
=1\otimes h^0\otimes\cdots\otimes
h^{n}
$$ 
for $n\geq 0$ and $s_{-1}: =t_l$ provide the contracting 
homotopies. 
Moreover, $\Br_\bull(\calH)$ is $\calH$-projective 
if $\calH$ is $A_r$-projective with respect to the left $A_r$-module
structure {\rm \rmref{H-comod}}, 
and in this case $\Br_\bull(\calH)
\stackrel{\eps_l}{\longrightarrow} A_l$ is a projective resolution of
$A_l$ in the category of left $\calH$-modules.

\subsubsection{The Hochschild theory as derived functors}
The main point is now:
\begin{theorem} 
\label{hoch-tor}
For any Hopf algebroid $\calH$ that is flat as right $A_l$-module \rmref{bimod-lmod}, 
there are natural isomorphisms 
$$
HH^\bull(\calH)\cong\Cotor^\bull_\calH(A_l,A_l),
$$
and if $\calH$ is projective as left $A_r$-module \rmref{H-comod},
$$
HH_\bull(\calH)\cong \Tor^\calH_\bull(A_r,A_l).
$$
\end{theorem}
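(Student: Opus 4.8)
The plan is to prove both isomorphisms by the same template: I would exhibit the Hochschild (co)chain complex that computes the left-hand side as the result of applying the functor defining the relevant derived functor to one of the two standard resolutions of $A_l$ built in this subsection. Since $HH^\bull(\calH)$ and $HH_\bull(\calH)$ are by definition the cohomology of $(C^\bull(\calH),b)$ and the homology of $(C_\bull(\calH), \sum(-1)^i d_i)$, it suffices in each case to produce a natural isomorphism of (co)chain complexes. In both cases the degreewise identification is supplied by the (co)invariant isomorphisms already proved, and the matching of differentials then reduces to a direct check using the counit, antipode, and coassociativity relations.

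For the cohomology statement I would first use the flatness hypothesis to guarantee that $\mathsf{Comod}_\elle(\calH_l)$ is abelian with enough injectives, so that $\Cotor^\bull_\calH(A_l,A_l)$ is a genuine derived functor of the cotensor product $A_l\,\bx_{\calH_l}(-)$ and may be computed from any resolution of $A_l$ by relative injectives. The cobar complex $A_l\stackrel{s_l}{\longrightarrow}\Cobar^\bull(\calH)$ is exactly such a resolution by cofree---hence relative injective---left $\calH_l$-comodules, and cofree comodules are acyclic for the cotensor functor, so $\Cotor^\bull_\calH(A_l,A_l)\cong H^\bull\big(A_l\,\bx_{\calH_l}\Cobar^\bull(\calH)\big)$. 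Since $\Cobar^n(\calH)=\calH\otimes_{A_l}C^n(\calH)$ is the cofree comodule generated by $C^n(\calH)$, the standard isomorphism \rmref{staub} gives $A_l\,\bx_{\calH_l}\Cobar^n(\calH)\cong C^n(\calH)$ degreewise, the inverse stripping the leading factor via the counit as in \rmref{tafelkreide}. It then remains to transport the cobar differential $b'=\sum(-1)^i\delta_i'$ through this identification; on representatives with leading entry $1$ one checks that $\delta_0'$ (comultiplying the leading slot) descends to the face $\delta_0$ prepending $1$, while the interior cofaces and the final one match on the nose, so $b'$ becomes the Hochschild differential $b$ of \rmref{coface-hochschild}.

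For the homology statement I would compute $\Tor^\calH_\bull(A_r,A_l)$ by applying the coinvariant localisation functor $A_r\otimes_\calH(-)=(-)_{\scriptscriptstyle{\rm coinv}}$ to a projective resolution of $A_l$ in $\mathsf{Mod}(\calH)$. Under the projectivity of $\calH$ as a left $A_r$-module \rmref{H-comod}, the augmented bar complex $\Br_\bull(\calH)\stackrel{\epsilon_l}{\longrightarrow}A_l$ is precisely such a projective resolution. Writing $\Br_n(\calH)=\calH\otimes_{A_r}C_n(\calH)$ with $\calH$ acting by left multiplication on the first factor realises it as the induced (``free'') $\calH$-module on $C_n(\calH)$, whose coinvariants strip the leading factor; this is the left-multiplication analogue of Proposition \ref{ProjAntipCor} and yields $A_r\otimes_\calH\Br_n(\calH)\cong C_n(\calH)$. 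Transporting the bar boundary $d'=\sum(-1)^i d_i'$ through this isomorphism should reproduce the face maps \rmref{DualHH} of $C_\bull(\calH)$: on representatives the middle faces multiply adjacent factors unchanged, $d_0'$ descends to $\epsilon_r(h^1)\cdot h^2\otimes\cdots$ after the leading factor is sent to $A_r$, and the top face $d_n'$ carries the factor $\epsilon_r(S^{-1}h^n)$ exactly as in \rmref{DualHH}.

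I expect the main obstacle to be the homological-algebra input together with pinning down the two degreewise (co)invariant isomorphisms amid the competing bimodule structures on $\calH$, rather than the differential bookkeeping. Concretely, one must justify that cofree $\calH_l$-comodules are acyclic for $\bx_{\calH_l}$ (so that the relative injective cobar resolution computes $\Cotor$) and that the bar complex is genuinely $\calH$-projective under the stated hypothesis; one must also verify that the coinvariance quotient $I_r$ and the cotensor constraint are compatible with all faces so that the induced maps are well defined, tracking carefully which of the several $(A_l,A_l)$- and $(A_r,A_r)$-bimodule structures on $\calH$ governs each tensor product. Once these points are settled, the identification of differentials is the direct verification sketched above, using the comonoid identities of the left bialgebroid for the cohomology and Lemma \ref{SCoUConv} together with \rmref{SHomId} for the homology.
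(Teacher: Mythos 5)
Your proposal is correct and takes essentially the same route as the paper's own proof: compute $\Cotor^\bull_\calH(A_l,A_l)$ from the relative injective cobar resolution and identify $A_l\bx_{\calH_l}\Cobar^\bull(\calH)\cong C^\bull(\calH)$ via \rmref{staub}, then compute $\Tor^\calH_\bull(A_r,A_l)$ from the bar resolution and identify $A_r\otimes_\calH\Br_\bull(\calH)\cong C_\bull(\calH)$, matching differentials in both cases. The only cosmetic differences are that the paper justifies computing $\Cotor$ from relative injectives by citing a generalisation of Ravenel's Lemmata A.1.2.8--9 where you invoke acyclicity of cofree comodules, and that your second identification is really just the free-module cancellation $A_r\otimes_\calH\calH\cong A_r$ (no antipode needed), rather than a genuine analogue of Proposition \ref{ProjAntipCor}.
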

\begin{proof}
Recall (cf.\ \cite{EilMoo:HAFICCPAIDF}) that $\Cotor^\bull_\calH(A_l,A_l)$ is the right derived
functor of the left cotensor product
$A_l\bx_{\calH_l}-:\mathsf{Comod}_\elle(\calH_l)\to \mathsf{Mod}(k)$,
where $A_l$ is seen as right $\calH_l$-comodule by means of $t_l: A_l
\to \calH_l$. 
That this derived 
functor can be computed by relative injective resolutions (like the
 cobar complex) follows from a straightforward generalisation of Lemmata A.1.2.8 and A.1.2.9 in \cite{Rav:CCASHGOS} to the noncommutative case. A little thought reveals that the space
$A_l\bx_{\calH_l}\Cobar^\bull(\calH)$ can be alternatively expressed
as $\{ h \otimes w \in \Cobar^\bull(\calH) \mid h \otimes w =
1_{\calH} \otimes s^l(\eps_l h)w \}$, where $w \in
C^\bull(\calH)$.
Using then the isomorphism
$f: A_l\bx_{\calH_l}\Cobar^\bull(\calH)\cong C^\bull(\calH)$ given by
\rmref{staub}, 
one easily checks that $b \circ f = f \circ (\id_{A_l} \otimes b')$, i.e.,
the induced 
differential coincides 
with that of the Hochschild complex. 
This proves the first 
isomorphism.

To prove the second isomorphism, 
use the bar resolution in $\mathsf{Mod}(\calH)$ to compute the left derived functor 
of $A_r\otimes_\calH -:\mathsf{Mod}(\calH)\to\mathsf{Mod}(k)$. 
We have 
$A_r\otimes_\calH\mbox{Bar}_\bull(\calH)\cong C_\bull(\calH)$, and one
easily sees that
the differentials coincide.
\end{proof}
\begin{remark}
The definition of the Hochschild cohomology depends solely on the underlying left bialgebroid structure 
of $\calH$. This is because for any left \mbox{bialgebroid} $\calH_l$, the
base algebra $A_l$ carries canonical left and right 
$\calH_l$-coactions given by left source and target maps, respectively.
By contrast, the definition of dual Hochschild homology {\em does} depend
on the Hopf algebroid structure: although the base algebra $A_r$ of the underlying right bialgebroid
is naturally a right $\calH$-module, 
there is {\it a priori} no canonical {\em left} $\calH$-module structure defined on it without the antipode. 
\end{remark}
\subsubsection{Coefficients}
Having identified Hochschild homology and cohomology as derived
functors, we can assume a different perspective and put coefficients in:
for $M \in \mathsf{Comod}_\erre(\calH_l)$ with coaction $\gD_M$ and $\calH$ flat as right $A_l$-module \rmref{bimod-lmod}, define
\[
H^\bull(\calH, M):=\Cotor^\bull_\calH(M,A_l).
\]
If $M$ is projective as a right $A_l$-module, one may use the cobar complex to compute these groups:
using the isomorphism 
$$
M\bx_{\calH_l}\Cobar^\bull(\calH) \cong M \otimes_{A_l} C^\bull(\calH), \quad m
\otimes h \otimes w \mapsto m \cdot \eps_l(h) \otimes w,
$$
with inverse
$m \otimes w \mapsto \gD_M(m) \otimes w$, where $w \in C^\bull(\calH)$, 
as well as 
the isomorphism
\begin{equation*}
\begin{split}
M \bx_{\calH_l} &\Cobar^\bull(\calH) \cong \\
& \quad \cong \{ m \otimes h \otimes w \in M \otimes_{A_l}
\Cobar^\bull(\calH_l) \mid 
m \otimes h \otimes w = \gD_M(m \cdot \eps_l h) \otimes w\},
\end{split}
\end{equation*}
similarly as above,
one obtains the explicit corresponding
complex with coefficients in $M$.

Likewise, we put for $N\in\mathsf{Mod}(\calH^{\op})$
\[
H_\bull(\calH, N):= \Tor^\calH_\bull(N,A_l).
\]
If $\calH$ is projective as left $A_r$-module \rmref{H-comod}, one may use the bar resolution to write
down the explicit complex computing these groups.

\subsection{The case of commutative and cocommutative Hopf algebroids}
\label{baldvier}
For commutative and cocommutative Hopf algebroids, one of the
respective two
cyclic theories is particularly simple to calculate 
in terms of the associated Hochschild theory. 
This phenomenon is known
for Hopf algebras, cf.\ \cite[Thm.\ 4.1]{KhaRan:ANCMFHA}, 
and originated with Karoubi's computation of the cyclic homology of
$k[G]$ in \cite{Kar:HCDGEDA}, 
where $G$ is a discrete group.
 
In a commutative Hopf algebroid, the underlying left bialgebroid may
serve to define the right bialgebroid structure by means of the prescriptions  
$A_r := A_l$, $s_r := t_l$, $t_r := s_l$, $\gD_r : =\gD_l$, and $\eps_r
:= \eps_l$,
recovering the notion of Hopf algebroids in \cite{Rav:CCASHGOS}.
On the other hand, cocommutativity for Hopf
algebroids is defined as the cocommutativity of the underlying left
bialgebroid $\calH_l$ (which by \rmref{bleistift} implies
cocommutativity for $\calH_r$ as well) and only makes sense 
for commutative $A_l=A_r$ for which $s_l=t_l$ as well as $s_r=t_r$. 

\begin{proposition}\hspace{4cm}
\label{bar-cyclic}
\begin{enumerate}
\item
Let $\calH$ be a commutative Hopf algebroid with
$A_r = A_l$, $s_r = t_l$, $t_r = s_l$, $\gD_r =\gD_l$, and $\eps_r
= \eps_l$.
Then $\Cobar^\bull(\calH)\in\mathsf{Comod}_\elle(\calH_l)$ is a para-cocyclic object 
by means of the cocyclic operator
\[
\tau'_n(h^0\otimes\cdots\otimes h^n)=h^0\cdot(1\otimes\tau_n(h^1\otimes\cdots\otimes h^n)),
\]
where on the right hand side the monoidal structure of
$\mathsf{Mod}(\calH)$ is used.
\item
Let $\calH$ be a cocommutative Hopf algebroid over commutative base
algebra $A$ with invertible antipode $S$. Then $\Br_\bull (\calH)$  is a
para-cyclic $\calH$-module with cyclic operator
\[
t'_n(h_0\otimes\cdots\otimes h_n)=h_0h_1^{(2)}\cdots h_n^{(2)}\otimes 
t_n\big(h_1^{(1)}\otimes\cdots\otimes h_n^{(1)}\big).
\]
\end{enumerate}
In both cases one obtains
cocyclic resp.\ cyclic structures if and only if $S^2=\id$.
\end{proposition}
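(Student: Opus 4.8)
The two assertions are formally dual, so the plan is to prove (i) in full and to obtain (ii) by the analogous argument with the left bialgebroid, modules, and $S$ replaced throughout by the right bialgebroid, comodules, and $S^{-1}$. The organizing principle is to realize each resolution as the (co)free object generated by the complex whose (co)cyclic structure is already understood: one has $\Cobar^n(\calH)\cong\calH\otimes_{A_l}C^n(\calH)$ as a cofree left $\calH_l$-comodule with coaction given by $\Delta_l$ on the first leg, and dually $\Br_n(\calH)\cong\calH\otimes_{A_r}C_n(\calH)$ as a free left $\calH$-module with $\calH$ acting by left multiplication on the first leg. Under these identifications $\tau'_n$ and $t'_n$ are the prescribed extensions of the operators $\tau_n$ of \eqref{CyclicHopf} and $t_n$ of \eqref{DualCyc}, whose para-(co)cyclic behaviour is controlled by Theorem \ref{CyclThm} and by the corollary to Theorem \ref{cyclic-duality}.

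First I would check that $\tau'_n$ is a morphism in $\mathsf{Comod}_\elle(\calH_l)$. Writing $\tau'_n(h^0\otimes w)=h^0\cdot(1\otimes\tau_n w)$ with the diagonal action of \eqref{mod-tensor}, this reduces to the compatibility of the diagonal $\calH$-action on $\calH\otimes_{A_l}C^n(\calH)$ with the cofree coaction on the first leg, which is nothing but coassociativity of $\Delta_l$; a short Sweedler computation then shows that the coaction intertwines $\tau'_n$. Dually, $t'_n$ is left $\calH$-linear essentially by construction, since it only multiplies into the free first leg; here cocommutativity of $\calH$---equivalently, by \eqref{bleistift}, of both $\Delta_l$ and $\Delta_r$---is instead required in order to reorder the coproduct legs $h_i^{(1)},h_i^{(2)}$ when checking that $t'_n$ is well defined over $\otimes_{A_r}$.

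Next I would establish the para-(co)cyclic identities relating $\tau'_n$ to the cofaces $\delta'_i$ and codegeneracies $\sigma'_i$ of $\Cobar^\bull$ (resp.\ $t'_n$ to the faces $d'_i$ and degeneracies $s'_i$ of $\Br_\bull$). Since a morphism of cofree $\calH_l$-comodules on $\calH\otimes_{A_l}C^n$ is determined by its coefficient $(\eps_l\otimes\id)\circ(-)$, and the coefficient of $\tau'_n$ is the map $h^0\otimes w\mapsto h^0\cdot\tau_n(w)$, each relation can be traced back to a statement about $\tau_n$ and the diagonal action on $C^\bull(\calH)$, to which Theorem \ref{CyclThm} applies. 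The inner cofaces $\delta'_i$ for $1\le i\le n$ act only on the $C^n$-leg and transport the corresponding identity verbatim; the delicate cases are the boundary relations, such as $\tau'_{n+1}\delta'_0=\delta'_{n+1}$, where the comultiplied coaction leg $\Delta_l h^0$ and the appended unit must be matched by means of the antipode axioms \eqref{TwAp}, the relations \eqref{SHomId}, and the counit identities. Commutativity of $\calH$ is indispensable precisely here: it lets the antipode-twisted factors produced by $\tau'_n$ be moved past the remaining tensor legs so that these identities collapse them.

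Finally I would compute the order. Iterating $\tau'_n$ and repeatedly applying $h_{(1)}S(h_{(2)})=s_l\eps_l(h)$ together with the counit axioms---again using commutativity to liberate the relevant factor---I expect $(\tau'_n)^{n+1}$ to act by $S^2$ on the tensor legs, as the model computation $(\tau'_1)^2(h^0\otimes h^1)=h^0\otimes S^2(h^1)$ already exhibits, so that $(\tau'_n)^{n+1}=\id$ exactly when $S^2=\id$; the dual statement for $(t'_n)^{n+1}$ follows from the order of $t_n$ recorded in the corollary to Theorem \ref{cyclic-duality} together with the inverse-antipode relations of Lemma \ref{SCoUConv}. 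The main obstacle throughout is the bookkeeping in these boundary identities---keeping track of how the single coaction, respectively action, leg interacts with the antipode twist built into $\tau'_n$ and $t'_n$---and isolating the exact points at which (co)commutativity, rather than a generic Hopf-algebroid identity, is genuinely forced.
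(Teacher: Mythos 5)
Your plan follows the same skeleton as the paper's proof: realize $\Cobar^\bull(\calH)$ and $\Br_\bull(\calH)$ as the cofree, resp.\ free, objects generated by $C^\bull(\calH)$ and $C_\bull(\calH)$, note that $\tau'_n$ is a comodule map by coassociativity (resp.\ that $t'_n$ is $\calH$-linear by construction), and reduce the order computation to Theorem \ref{CyclThm}. But there is a genuine gap at the central step, and it is exactly the point you defer to ``bookkeeping''. Iterating the definition gives the nested expression
\[
(\tau'_n)^{n+1}(h^0\otimes w)=h^0_{(1)}\otimes h^0_{(2)}\cdot\tau_n\Big(h^0_{(3)}\cdot\tau_n\big(\cdots h^0_{(n+2)}\cdot\tau_n(w)\cdots\big)\Big),
\]
and for $n\geq 2$ the mechanism you propose---``repeatedly applying $h_{(1)}S(h_{(2)})=s_l\eps_l(h)$'' as you iterate---has nothing to act on: the antipode factors produced by $\tau_n$ involve the $h^i$, while the legs of $\gD_l h^0$ are distributed over different tensor slots, so no collapsible pair of legs of one and the same coproduct occurs at any intermediate stage. (Your model case $n=1$ hides this, because there $\tau_1=S$ is applied directly to a product and anti-multiplicativity does the work automatically.) The missing idea is the commutation rule
\[
h\cdot\tau^j_n(h^1\otimes\cdots\otimes h^n)=\tau^j_n(h^1\otimes\cdots\otimes h^jS^{-1}h\otimes\cdots\otimes h^n),\qquad 1\leq j\leq n,
\]
which holds in any Hopf algebroid with invertible antipode (induction on $j$, using only that $S$ is an anti-algebra map and \eqref{CyclicHopf}), and which the paper isolates as a lemma. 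It converts the nested expression into $h^0_{(1)}\otimes h^0_{(2)}\cdot\tau_n^{n+1}\big(h^1S^{-1}h^0_{(n+2)}\otimes\cdots\otimes h^nS^{-1}h^0_{(3)}\big)$; only now does Theorem \ref{CyclThm} apply, and only after that do commutativity and the antipode--counit collapses you describe enter---once, at the end---to eliminate the $h^0$-legs and yield $h^0\otimes S^2h^1\otimes\cdots\otimes S^2h^n$. Your cofreeness reduction does not bypass this either: the coefficient of a composite of comodule endomorphisms of a cofree comodule is precisely the convolution-type nesting displayed above, not the naive composite of coefficients.

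The same gap recurs, dually, in part \emph{ii}), where mechanical dualization is not available off the shelf. You correctly locate the role of cocommutativity ($s_r=t_r$) in making $h_1\otimes\cdots\otimes h_n\mapsto h_1^{(2)}\cdots h_n^{(2)}\otimes h_1^{(1)}\otimes\cdots\otimes h_n^{(1)}$ a well-defined left $\calH_r$-coaction $\gD_r$ on $C_\bull(\calH)$, but the workhorse of the paper's argument is the compatibility of this coaction with the cyclic operator \eqref{DualCyc},
\[
\gD_r\big(t_n(h_1\otimes\cdots\otimes h_n)\big)=S^{-1}({h_n}_{(1)})\otimes t_n\big(h_1\otimes\cdots\otimes h_{n-1}\otimes {h_n}_{(2)}\big),
\]
which plays the role of the commutation lemma and makes $(t'_n)^{n+1}$ telescope to $h_0\otimes t^{n+1}_n(h_1\otimes\cdots\otimes h_n)=h_0\otimes S^{-2}h_1\otimes\cdots\otimes S^{-2}h_n$ (the paper additionally invokes the Takeuchi condition here). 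Without these two interaction identities, what you flag as the ``main obstacle'' of bookkeeping is not bookkeeping; it is the proof.
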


\begin{proof}
{\it i}) 
Although we view the cobar complex as 
cosimplicial object in $\mathsf{Comod}_\elle(\calH_l)$, it has a natural left
$\calH$-module structure as in \rmref{mod-tensor} from which 
it is also immediate that $\tau'$ is a morphism of graded
left $\calH_l$-comodules. 
Let us now show that $\tau'$ is para-cocyclic:
from the explicit formula \eqref{CoACOp} of the cocyclic operator
$\tau$, one easily shows by induction that
\[
h\cdot\tau^j_n(h^1\otimes\cdots\otimes h^n)=\tau^j_n\left(h^1\otimes\cdots\otimes h^jS^{-1}h\otimes
\cdots\otimes h^n\right),
\]
for all $1\leq j \leq n$. With this equation we can now compute
\[
\begin{split}
{\tau'}^{n+1}_n(h^0\otimes\cdots\otimes
h^n)&={\tau'}_n^n\left(h^0\cdot
(1\otimes\tau_n(h^1\otimes\cdots\otimes h^n))\right)\\
&={\tau'}_n^n(h^0_{(1)}\otimes h^0_{(2)}\cdot\tau_n(h^1\otimes\cdots\otimes h^n))\\
&={\tau'}_n^n(h^0_{(1)}\otimes\tau_n(h^1S^{-1} h^0_{(2)}\otimes\cdots\otimes h^n))\\
&={\tau'}_n^{n-1}(h^0_{(1)}\otimes h^0_{(2)}\cdot\tau^2_n(h^1S^{-1} h^0_{(3)}\otimes\cdots\otimes h^n))\\
&={\tau'}_n^{n-1}(h^0_{(1)}\otimes\tau^2_n(h^1S^{-1} h^0_{(3)}\otimes
h^2S^{-1} h^0_{(2)}\otimes\cdots\otimes h^n))\\
&\hspace{3cm}\vdots\\
&=h^0_{(1)}\otimes h^0_{(2)}\cdot \tau^{n+1}_n\big(h^1S^{-1}
h^0_{(n+2)} \otimes \cdots\otimes h^nS^{-1} h^0_{(3)}\big)\\
&= h^0_{(1)}\otimes h^0_{(2)}\cdot\big(S^{2}(h^1S^{-1}h^0_{(n+2)})\otimes\cdots 
       \otimes S^{2}(h^nS^{-1}h^0_{(3)})\big)\\
&=h^0\otimes S^2 h^1\otimes\cdots\otimes S^2 h^n.
\end{split}
\]
The last equality is 
verified by writing out the expression and using the left co\-mon\-oid identities.
This proves that $\tau'$ generates an action of the cyclic
groups if and only if $S^2=\id$. 
The remaining cocyclic identities, compatibility with the $\delta'_i$ and $\sigma'_i$ that is, are easy to verify.

{\it ii}) Since $s_r = t_r$, the space
$C_\bull(\calH)$ carries a left $\calH_r$-coaction given by
$\Delta_r(h_1 \otimes \cdots \otimes h_n) := h_1^{(2)}\cdots h_n^{(2)}\otimes h_1^{(1)}\otimes\cdots\otimes
h_n^{(1)}$, which appears in the expression of the cyclic
operator. One has 
\[
\begin{split}
\Delta_r(t_n(h_1\otimes\cdots\otimes h_n))
&=S^{-1}\big({h_1}_{(2)} \cdots  {h_{n-1}}_{(2)}
      {h_n}_{(1)}\big) {h_1}^{(2)}_{(1)}  \cdots
{h_{n-1}}^{(2)}_{(1)} \\
&\qquad\otimes S^{-1}(h_{1(3)}\cdots h_{n-1(3)}{h_n}_{(2)}) \otimes
{h_1}_{(1)}^{(1)}\otimes\cdots
\otimes {h_{n-1}}_{(1)}^{(1)}\\
&=S^{-1}{h_n}_{(1)}\otimes t_n(h_1\otimes\cdots\otimes h_{n-1}
\otimes {h_n}_{(2)}).
\end{split}
\]
With this we now compute
\[
\begin{split}
{t'}_{n}^{n+1}(&h_0\otimes\cdots\otimes
h_n)
={t'}^n_n\left(h_0h_1^{(2)}\cdots h_n^{(2)}\otimes
t_n(h_1^{(1)}\otimes\cdots\otimes h_n^{(1)})\right)\\
&={t'}_n^{n-1}\left(h_0h_1^{(2)}\cdots h_{n-1}^{(2)}
h_n^{(2)}S^{-1} {h_n}^{(1)}_{(1)} \otimes
t^2_n(h_1^{(1)}\otimes\cdots\otimes h_{n-1}^{(1)} \otimes
{h_n}^{(1)}_{(2)})\right)\\
&={t'}_n^{n-1}\left(h_0h_1^{(2)}\cdots h_{n-1}^{(2)}
t_r\eps_r(h_n^{(1)})\otimes t^2_n(h_1^{(1)}\otimes\cdots\otimes
h_n^{(2)})\right)\\
&={t'}_n^{n-1}\left(h_0h_1^{(2)}\cdots h_{n-1}^{(2)} \otimes
t^2_n(h_1^{(1)}\otimes\cdots\otimes h_{n-1}^{(1)} \otimes h_n)\right)\\
&\hspace{3cm}\vdots\\
&=h_0\otimes t^{n+1}_n(h_1\otimes\cdots\otimes h_n)\\
&=h_0\otimes S^{-2} h_1\otimes\cdots\otimes S^{-2} h_n,\\
\end{split}
\]
where the vertical dots mean the $(n-1)$-fold repetition of the previous
manipulation. To obtain the fourth line we have used $s_r = t_r$ and
\[
a_1 \cdot t_n(h_1\otimes\cdots\otimes h_n) \cdot a_2 
= t_n (h_1\otimes\cdots\otimes a_2 \cdot h_n \cdot a_1), \qquad a_1,
a_2 \in A_r,
\]
with respect to the respective $(A_r,A_r)$-bimodule structure
\rmref{H-comod}, as follows from \eqref{DualCyc} and 
by exploiting the Takeuchi condition of the left coproduct on page \pageref{taki}.
\end{proof}
\begin{theorem}
\label{cocomm-cycl}
Let $\calH$ be a Hopf algebroid with involutive antipode.
\begin{enumerate}
\item
When $\calH$ is commutative with 
$A_l=A_r$, 
$s_l = t_r$,
$t_l = s_r$, $\Delta_l=\Delta_r$, $\eps_l = \eps_r$, and flat as right $A_l$-module \rmref{bimod-lmod}, one
has
\[
HC^\bull(\calH)\cong\underset{{i \geq 0}}{\textstyle{\bigoplus}} HH^{\bull-2i}(\calH);
\]
\item
when $\calH$ is cocommutative and projective as left $A_r$-module \rmref{H-comod}, there is a natural isomorphism
\[
HC_\bull(\calH) \cong \underset{{i \geq 0}}{\textstyle{\bigoplus}}
HH_{\bull-2i}(\calH).
\]
\end{enumerate}
\end{theorem}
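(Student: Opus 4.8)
The plan is to reduce both statements to the vanishing of Connes' boundary operator on Hochschild (co)homology and then invoke the degeneration of the $SBI$ sequence. For \emph{(ii)} it suffices to prove that $B$ induces the zero map $B_\ast\colon HH_\bullet(\calH)\to HH_{\bullet+1}(\calH)$: granting this, the homology $SBI$ sequence
\[
\cdots\xrightarrow{\,B\,}HH_n\xrightarrow{\,I\,}HC_n\xrightarrow{\,S\,}HC_{n-2}\xrightarrow{\,B\,}HH_{n-1}\xrightarrow{\,I\,}\cdots
\]
breaks into short exact sequences $0\to HH_n\xrightarrow{I}HC_n\xrightarrow{S}HC_{n-2}\to 0$, which, split and iterated, give $HC_n\cong\bigoplus_{i\ge 0}HH_{n-2i}$ by induction on $n$. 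Statement \emph{(i)} is obtained dually, with the degree $-1$ operator $B_\ast\colon HH^\bullet\to HH^{\bullet-1}$ and the cohomological $SBI$ sequence.

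To establish $B_\ast=0$ in the cocommutative case I would use Proposition~\ref{bar-cyclic}\emph{(ii)}, which equips the bar resolution $\Br_\bullet(\calH)$ with a genuine cyclic structure in $\mathsf{Mod}(\calH)$ (here the hypothesis $S^2=\id$ is exactly what upgrades $t'$ from para-cyclic to cyclic). By the proof of Theorem~\ref{hoch-tor} the functor $F:=A_r\otimes_\calH-$ sends $\Br_\bullet(\calH)$ to $C_\bullet(\calH)$ intertwining the Hochschild differentials; the first step is to check that $F$ also intertwines $t'$ with the operator \rmref{DualCyc}, so that the whole mixed-complex structure on $C_\bullet(\calH)$, and in particular $B$, is the image under $F$ of the corresponding $\calH$-linear operator $B_{\Br}$ on $\Br_\bullet(\calH)$. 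The key observation is then purely homological: $B_{\Br}\colon\Br_n(\calH)\to\Br_{n+1}(\calH)$ is an $\calH$-linear chain map anticommuting with the boundary, hence a self-map \emph{raising} homological degree by one. Since $\calH$ is projective as left $A_r$-module \rmref{H-comod}, $\Br_\bullet(\calH)\xrightarrow{\eps_l}A_l$ is a projective resolution, so the homotopy class of $B_{\Br}$ lies in $\Ext^{-1}_\calH(A_l,A_l)=0$; being a degree-raising self-map of a complex of projectives, it is therefore $\calH$-linearly null-homotopic. Applying $F$ to this null-homotopy shows $B=F(B_{\Br})$ is null-homotopic on $C_\bullet(\calH)$, whence $B_\ast=0$ on $HH_\bullet(\calH)$.

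Part \emph{(i)} is the exact dual. Proposition~\ref{bar-cyclic}\emph{(i)} makes the cobar resolution $\Cobar^\bullet(\calH)$ a cocyclic object in $\mathsf{Comod}_\elle(\calH_l)$, which, using flatness of $\calH$ as right $A_l$-module \rmref{bimod-lmod}, is a resolution of $A_l$ by relative injectives; the cotensor functor $A_l\bx_{\calH_l}-$ carries it to $C^\bullet(\calH)$ computing $HH^\bullet(\calH)=\Cotor^\bullet_\calH(A_l,A_l)$. Here the induced $\calH_l$-colinear operator $B_{\Cobar}\colon\Cobar^n(\calH)\to\Cobar^{n-1}(\calH)$ \emph{lowers} cohomological degree by one, so its relative homotopy class lies in the vanishing group $\Cotor^{-1}_\calH(A_l,A_l)=0$; relative-injectivity of the resolution then makes it null-homotopic, and pushing the homotopy through $A_l\bx_{\calH_l}-$ yields $B_\ast=0$ on $HH^\bullet(\calH)$.

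The main obstacle is the identification carried out in the first step: verifying that, under the isomorphisms of Theorem~\ref{hoch-tor}, the \emph{entire} mixed (co)complex structure---not merely the Hochschild differential---descends from the (co)bar resolution to $C_\bullet(\calH)$, resp.\ $C^\bullet(\calH)$, so that $B$ really is the image of the $\calH$-(co)linear operator to which the degree count applies. Concretely one must match $t'$ with \rmref{DualCyc} after applying $A_r\otimes_\calH-$ (and its cobar analogue), and it is precisely here that cocommutativity, resp.\ commutativity, together with $S^2=\id$ enter. A secondary point requiring care is the splitting of the short exact sequences produced by the degenerate $SBI$ sequence; as in the Hopf-algebra case \cite{KhaRan:ANCMFHA} the periodicity operator $S$ supplies natural sections, but over a general ground ring $k$ this splitting should be checked explicitly.
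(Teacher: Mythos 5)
Your first two steps are sound and run parallel to the paper: Proposition~\ref{bar-cyclic}\,\textit{ii)} does make $\Br_\bull(\calH)$ a cyclic object in $\mathsf{Mod}(\calH)$, the localisation $A_r\otimes_\calH-$ carries it to the cyclic module $C_\bull(\calH)$ (this compatibility must indeed be checked, and the paper verifies the cohomological analogue explicitly), and your comparison-theorem argument correctly produces an $\calH$-linear homotopy $h$ with $d'h+hd'=B_{\Br}$, since $B_{\Br}$ raises degree by one on a bounded-below complex of projectives and $\Ext^{-1}_\calH(A_l,A_l)=0$. The gap is the final step: neither $B_\ast=0$ on $HH_\bull$ nor even a chain-level null-homotopy of $B$ forces the $SBI$ sequence to break into short exact sequences. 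What must vanish is the connecting map $\partial\colon HC_{n-2}\to HH_{n-1}$, and your hypothesis only controls the composite $\partial\circ I=B_\ast$. Indeed $\partial$ is evaluated on a cycle $(c_1,c_2,\dots)$ of the total $(b,B)$-complex as $[Bc_1]$, where $c_1$ is \emph{not} a $b$-cycle (it satisfies $bc_1=-Bc_2$), so the homotopy only gives $Bc_1=bhc_1-hBc_2$, which need not be a boundary. Equivalently: killing $d_1=B_\ast$ in the spectral sequence of the $(b,B)$-bicomplex says nothing about $d_2,d_3,\dots$, which are Massey-type products manufactured from $h$ itself.

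This is a genuine failure, not a bookkeeping issue. Over any $k$, take the mixed complex $C_0=kz$, $C_1=kx$, $C_2=ku$, $C_3=kp$, $C_n=0$ for $n\geq 4$, with $bu=x$ (all other components of $b$ zero) and $Bz=x$, $Bu=p$ (all other components of $B$ zero). Here $B$ is null-homotopic via $hz=u$, $hx=p$, $hu=hp=0$, and $HH_\bull=(k,0,0,k,0,\dots)$; yet $HC_3=0$, whereas your splitting would force $HC_3\cong HH_3\oplus HH_1=k$. Concretely, the connecting map $HC_2\to HH_3$ sends the class of $(u,-z)$ to $[Bu]=[p]\neq 0$ even though $B_\ast=0$. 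Your argument can nonetheless be completed, precisely because your homotopy is $\calH$-linear on a projective resolution: iterate the comparison theorem to build higher homotopies $\alpha_0=\id$, $\alpha_1=h$, and $\alpha_m$ solving $\alpha_m d'-d'\alpha_m=B_{\Br}\alpha_{m-1}$; at each stage the obstruction $B_{\Br}\alpha_{m-1}$ is an $\calH$-linear chain map raising degree by $2m-1\geq 1$, hence null-homotopic by the same vanishing of negative $\Ext$ groups. The triangular map with entries $\alpha_{j-i}$ is then an isomorphism of complexes $\Tot(\Br_\bull(\calH),d',0)\cong\Tot(\Br_\bull(\calH),d',B_{\Br})$ (identity on the diagonal, finitely many terms in each total degree), and applying $A_r\otimes_\calH-$ gives $HC_\bull(\calH)\cong\bigoplus_{i\geq 0}HH_{\bull-2i}(\calH)$ with no extension or splitting problems left over. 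This repaired argument is essentially the paper's proof made explicit: there one observes that Tsygan's double complex of $\Br_\bull(\calH)$ is a complex of projective $\calH$-modules quasi-isomorphic to the complex $A_{l\bull}$ with zero differential, so that $HC_\bull(\calH)\cong{\rm\bf Tor}_\bull(A_r,A_{l\bull})\cong\bigoplus_{i\geq 0}\Tor^\calH_{\bull-2i}(A_r,A_l)$; part \textit{i)} is handled dually with ${\rm\bf Cotor}$, and your cohomological version has the identical gap and admits the identical repair.
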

\begin{proof}
{\it i}) Consider 
(cf.\ \cite{Tsy:HOMLAORATHH, Lod:CH}) 
Tsygan's 
double complex 
$CC^\bull\left(\Cobar^\bull(\calH)\right)$ of the cocyclic left $\calH_l$-comodule
$\Cobar^\bull(\calH)$. 
Since $\Cobar^\bull(\calH)$ is a resolution of $A_l$ in the category of left $\calH_l$-comodules,
the double complex 
$CC^\bull\left(\Cobar^\bull(\calH)\right)$ is a resolution (in the sense of hypercohomology, see \cite[Sect.\ 5.7]{Wei:AITHA}) of the cochain complex
\[
A_{l \bull}:\quad 0\rightarrow A_l\rightarrow 0\rightarrow A_l\rightarrow 0\rightarrow\ldots,
\]
with the first $0$ in degree zero. From the explicit form of the
cyclic 
operator in Proposition \ref{bar-cyclic}, one easily
observes that the natural isomorphism
\[
A_l\bx_{\calH_l} \Cobar^\bull(\calH)\cong C^\bull(\calH)
\]
of \rmref{staub} 
is one of cocyclic $k$-modules. This identifies cyclic cohomology of $\calH$ as 
the hyper-derived $\Cotor$, written ${\rm\bf Cotor}$, of $A_l$ with
values in the chain complex 
\nolinebreak
$A_{l\bull}$:
\[
HC^\bull(\calH)={\rm\bf Cotor}^\bull(A_l,A_{l\bull}).
\]
Clearly, any resolution for $A_l$ defines a resolution of the complex
$A_{l\bull}$ 
by putting $0$ in the even degree columns,
and therefore ${\rm\bf Cotor}^\bull(A_l,A_{l\bull})=\bigoplus_{i\geq 0}HH^{\bull-2i}(\calH)$.

{\it ii}) is proved in very much the same fashion, 
this time identifying $HC_\bull(\calH)={\rm\bf Tor}_\bull(A_r,A_{l\bull})$, 
the hyper-derived functors 
of $A_r\otimes_\calH-:\mathsf{Mod}(\calH)\to\mathsf{Mod}(k)$.
\end{proof}
\section{Examples}
\label{Examples}
In this section we discuss examples of Hopf algebroids and compute their cyclic homology and cohomology groups.
\subsection{The enveloping Hopf algebroid of an algebra}
\label{pg}
A very simple example of a Hopf algebroid is given by the 
enveloping algebra $\Ae = A \otimes_k \Aop$ of an arbitrary (unital) $k$-algebra $A$. It is a left
bialgebroid over $A$ by means of the structure maps
$s_l(a) := a \otimes_k 1$, $t_l(b) := 1 \otimes_k b$, 
$\gD_l(a \otimes b) := (a \otimes_k 1) \otimes_A (1 \otimes_k b)$,
$\eps_l(a \otimes_k b) := ab$, 
and a right bialgebroid over $\Aop$ by means of $s_r(b) := 1 \otimes_k b$, $t_r(a) := a \otimes_k 1$, 
$\gD_r(a \otimes b) := (1 \otimes_k a) \otimes_A (b \otimes_k 1)$,
$\eps_r(a \otimes_k b) := ba$. With the antipode $S(a \otimes_k b) := b
\otimes_k a$, these data coalesce to a Hopf algebroid. 
\begin{proposition} 
\label{pg-cycl}
Let $A$ be a $k$-algebra and $\Ae$ its enveloping algebra.
\begin{enumerate}
\item
The Hopf-cyclic cohomology of $\Ae$ is trivial, i.e.,
\[
HC^\bull(\Ae)=\begin{cases} k&\mbox{if} \ {\scriptstyle{\bullet}}=0, \\0&\mbox{else.}\end{cases}
\]
\item
The dual Hopf-cyclic homology of $\Ae$ equals the cyclic
  homology of the \mbox{$k$-algebra $A$:}
\[
HC_\bull(\Ae)=HC_\bull^{\scriptscriptstyle{\rm alg}}(A).
\]
\end{enumerate}
\end{proposition}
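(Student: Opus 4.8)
The plan is to compute the two cyclic theories of $\Ae$ directly from the definitions given in \S\ref{basic} and \S\ref{dual}, using the explicit structure maps of the enveloping Hopf algebroid. Since $\Ae$ has involutive antipode (one checks $S^2 = \id$ immediately from $S(a \otimes_k b) = b \otimes_k a$), both cocyclic and cyclic modules are genuinely (co)cyclic, and both parts of Definitions following Theorems \ref{CyclThm} and \ref{cyclic-duality} apply. The base algebras are $A_l = A$ and $A_r = \Aop$.

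\emph{For part (i),} I would identify the cocyclic module $\Ae_\natural^\bull = C^\bull(\Ae)$ more concretely. Here $C^n(\Ae) = \Ae \otimes_{A} \cdots \otimes_{A} \Ae$ with the $(A,A)$-bimodule structure \rmref{bimod-lmod}, i.e.\ $a_1 \cdot (a \otimes_k b) \cdot a_2 = a_1 a \otimes_k b a_2$. A direct inspection of the tensor-product relation shows $C^n(\Ae) \cong A^{\otimes_k(n+1)}$ as a $k$-module. The key is then to recognize this cocyclic module: with the coface maps \rmref{coface-hochschild} unwound through $\gD_l$ and the cyclic operator \rmref{CyclicHopf} unwound through the antipode, I expect it to coincide with the \emph{standard cyclic (dual) module associated to the coalgebra $A^*$}, or more directly, with a cocyclic module whose cohomology is concentrated in degree zero. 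The cleanest route is to exhibit an explicit contracting homotopy, or to appeal to Theorem \ref{hoch-tor}: since $\Ae$ is flat (indeed free) as a right $A$-module, $HH^\bull(\Ae) \cong \Cotor^\bull_{\Ae}(A, A)$, and I would compute this $\Cotor$ using the cobar resolution, showing it vanishes above degree zero because the relevant comodule $A$ over $(\Ae)_l$ is relatively injective (the coalgebra $\Ae_l$ is ``cofree'' over $A$ in the appropriate sense). Once $HH^\bull(\Ae)$ is concentrated in degree $0$ with value $k$, the $SBI$-sequence (or the collapse of the cyclic bicomplex) forces $HC^\bull(\Ae) = k$ in degree $0$ and $0$ otherwise.

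\emph{For part (ii),} the strategy is to show that the dual cyclic module $\Ae^\natural_\bull = C_\bull(\Ae)$ is \emph{isomorphic as a cyclic module} to the standard algebra cyclic module $A^{\scriptscriptstyle{\rm alg},\sharp}_\bull = A^{\otimes_k(\bullet+1)}$ of \S\ref{invariants}. Here $C_n(\Ae) = \Ae \otimes_{\Aop} \cdots \otimes_{\Aop} \Ae$ with bimodule structure \rmref{H-comod}, namely $a_1 \cdot (a \otimes_k b) \cdot a_2 = a \otimes_k a_2 b a_1$ via $s_r$. Again a $k$-module identification $C_n(\Ae) \cong A^{\otimes_k(n+1)}$ should hold. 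The real content is matching the operators: I would write out the face maps \rmref{DualHH} and the cyclic operator \rmref{DualCyc} explicitly on $\Ae$, using $\gD_r$, $\eps_r$, and $S^{-1} = S$, and check they reproduce (up to the bookkeeping isomorphism) the Hochschild face maps $d_i$ with the last face $d_n$ cyclically multiplying, and the cyclic rotation $t_n$ of the standard algebra cyclic module. By the observation in \S\ref{invariants}, the canonical projection $\Ae^{\scriptscriptstyle{\rm alg},\natural}_\bull \to B_\bull(\Ae)$ together with the invariants embedding $\Psi_{\scriptscriptstyle{\rm inv}}$ \rmref{invariant} gives the comparison map; I expect that for $\Ae$ this comparison $\overline{\Psi}_{\scriptscriptstyle{\rm inv}}$ is actually an isomorphism of cyclic modules (this is precisely one of the favorable cases alluded to there), whence $HC_\bull(\Ae) \cong HC_\bull^{\scriptscriptstyle{\rm alg}}(A)$.

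\emph{The main obstacle} I anticipate is the bookkeeping in part (ii): verifying that $\overline{\Psi}_{\scriptscriptstyle{\rm inv}}$ really is a map of cyclic (not merely simplicial) objects and that it is bijective for this particular $\calH = \Ae$. The cyclic operator \rmref{DualCyc} involves $S^{-1}$ applied to a product of second Sweedler components together with left-over first components, and one must confirm that under the identification $C_n(\Ae) \cong A^{\otimes(n+1)}$ these conspire to give the naive rotation $t_n(h_0 \otimes \cdots \otimes h_n) = h_n \otimes h_0 \otimes \cdots \otimes h_{n-1}$. Tracking the source/target maps $s_r, t_r$ and the flip antipode through the tensor relations over $\Aop$ is where sign- and order-errors lurk; careful use of the Takeuchi condition and of the explicit coproduct $\gD_r(a \otimes b) = (1 \otimes_k a) \otimes_A (b \otimes_k 1)$ should make everything collapse, but this is the step that demands genuine verification rather than formal appeal.
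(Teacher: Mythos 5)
Your primary route for part (ii) --- identifying $C_n(\Ae)\cong A^{\otimes_k(n+1)}$ as $k$-modules and checking that the operators \rmref{DualHH} and \rmref{DualCyc} turn into the faces and the rotation of the standard cyclic module of $A$ --- is exactly the paper's proof, which consists of precisely this write-out. Your fallback via \S\ref{invariants}, however, contains a genuine error. Writing $h^i=x_i\otimes_k y_i$, the identification $C_{n+1}(\Ae)\cong A^{\otimes_k(n+2)}$ sends $h^0\otimes\cdots\otimes h^n$ to $x_0\otimes\cdots\otimes x_n\otimes y_n\cdots y_0$, and the relation $s_r(a)h^0\otimes\cdots\otimes h^n\sim h^0\otimes\cdots\otimes h^n s_r(a)$ defining $B_n(\Ae)$ becomes $x_0\otimes\cdots\otimes x_n\otimes wa\sim x_0\otimes\cdots\otimes x_n\otimes aw$ with $w=y_n\cdots y_0$. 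Hence $B_n(\Ae)\cong A^{\otimes_k(n+1)}\otimes_k A/[A,A]$: it is a quotient of the cyclic module of the \emph{algebra} $\Ae$, not of $A$, and $\overline{\Psi}_{\scriptscriptstyle{\rm inv}}$ cannot be surjective (even for commutative $A$ the target is $A^{\otimes_k(n+2)}$), so it is not an isomorphism of cyclic modules. Moreover, the ``favorable case'' alluded to in \S\ref{invariants} is the \'etale groupoid example of \S\ref{etale}, and even there the map is only an embedding, not an isomorphism. So this detour must be discarded; your direct identification has to carry --- and does carry --- the whole proof of (ii).

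For part (i) you take a different route from the paper, which simply cites \cite{ConMos:DCCAHASITG} and remarks that (i) also follows from (ii) by cyclic duality. Your derived-functor argument is sound up to its last step: via Theorem \ref{hoch-tor}, $HH^\bull(\Ae)\cong\Cotor^\bull_{\Ae}(A,A)$ is the cohomology of the Amitsur complex $A\to A^{\otimes_k 2}\to A^{\otimes_k 3}\to\cdots$, which is $k$ in degree $0$ and $0$ above whenever the unit $k\to A$ splits $k$-linearly (e.g.\ $k$ a field). But the concluding $SBI$ inference is a non sequitur: when $HH^\bull$ is concentrated in degree $0$ with value $k$, Connes' sequence $\cdots\to HC^{n-2}\stackrel{S}{\longrightarrow} HC^{n}\to HH^{n}\to HC^{n-1}\to\cdots$ makes the periodicity map $S$ an \emph{isomorphism} $HC^{n}\cong HC^{n+2}$ for all $n\geq 0$, so what actually follows from your premises is $HC^{2n}(\Ae)=k$ for \emph{every} $n$ and $HC^{\rm odd}=0$, i.e.\ triviality in the sense of the ground ring ($HP^0=k$, $HP^1=0$). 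No $SBI$ argument can improve this to vanishing in positive even degrees, since $HC^0=HH^0=k$ propagates under $S$; the displayed vanishing statement is in fact incompatible with $HH^0=k$, and must be read in this periodic sense (which is also what the paper's own route via \cite{ConMos:DCCAHASITG} and cyclic duality from (ii) delivers).
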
 
\begin{proof}
{\it i}) was proved in \cite{ConMos:DCCAHASITG}. It actually also follows by 
cyclic duality from {\it ii}). To prove {\it ii}), one just writes out the 
cyclic object associated to $\Ae$; 
it is exactly equal to the cyclic object
$A^{{\scriptscriptstyle{{\rm alg}}}, \natural}$
associated to the algebra $A$.
\end{proof}
Recall 
that, when passing to the periodic theory, 
the right hand side in {\it ii}) yields the noncommutative generalisation of
classical de Rham cohomology, cf.\ \cite{Con:NCDG}. 
\subsection{Etale groupoids}
\label{etale}
{\em Notation.}
Let $E$ and $F$ be vector
 bundles (or more generally, $c$-soft sheaves of vector spaces) over
 two manifolds 
 $X$ and $Y$, respectively. 
Suppose that $f:X\to Y$ is an \'etale map and $\alpha_f:E\cong f^*F$
 an isomorphism of vector bundles over $X$. 
Then the push-forward (or fibre sum) 
of $f$, denoted $f_*:\Gamma_c(X,E)\to\Gamma_c(Y,F)$ is defined by
\[
(f_*s)(y)=\sum_{f(x)=y}\alpha_f(s(x)), 
\]
where $x\in X$, $y\in Y$ and $s\in\Gamma_c(X,E)$. 
This construction is functorial in the obvious sense.

Another class of examples of Hopf algebroids comes from \'etale
groupoids, as essentially already noted
in \cite{Mrc:THAOFOEGATPME,Mrc:ODBEGAHA} (a different way to obtain a (topological) Hopf algebroid from an \'etale groupoid is described in \cite{KamTan:HAASCC}).
A groupoid $\G$, to start with, is a small category in which each arrow is invertible. We denote the space of objects
by $M$ and the space of arrows by $\G$. The structure maps can be organised in the following diagram:
\vspace*{-.2cm}
\begin{equation*}
\xymatrix {
\G_2 \ar^m[r] & \, \G  \ar^{i}[r] & \, \G  \ar^s@<0.5ex>[r]\ar_t@<-0.5ex>[r] & \, M  \ar^u[r] & \, \G  }.
\vspace*{-.2cm}
\end{equation*}
Here $u$ is the unit map, $s$ and $t$ are the source and target of
arrows in $\G$, $i$ is the inversion and $m$ 
the multiplication defined on the space of composable arrows:
\[
\G_2:=\G \qttr \times s t {} {M} \G=\{(g_1,g_2)\in\G\times \G,~s(g_1)=t(g_2)\}.
\]
A Lie groupoid is a groupoid $\G\rightrightarrows M$ for which $\G$ and $M$ are smooth manifolds 
and all structure maps listed above are smooth.
In an \'etale groupoid, these are assumed to be local
diffeomorphisms. For simplicity of exposition, we will assume that
$\G$ is Hausdorff.

Associated to an \'etale groupoid is its convolution algebra $C^\infty_c(\G)$ with product
\begin{equation}
\label{convolution}
(f_1*f_2)(g)=\sum_{g_1g_2=g}f_1(g_1)f_2(g_2),
\end{equation}
where $f_1,f_2\in C^\infty_c(\G)$ and $g,g_1,g_2\in\G$. We shall equip this noncommutative algebra 
with the structure of a Hopf algebroid in the following way: the base algebra is given by the 
commutative algebra $C_c^\infty(M)$ and we put $s_l=t_l=s_r=t_r=u_*$, the push-forward along the inclusion 
of the units. We are left with two $\cinfc M$-actions on $\cinfc \G$ by
left and right multiplication with respect to which we define the
tensor products $\otimes^{ll},~\otimes^{rl}$ 
and $\otimes^{rr}$. The formula 
\[
\Omega(f_1\otimes f_2)(g_1,g_2):=f_1(g_1)f_2(g_2),
\]
with $f_1,f_2\in C^\infty_c(\G)$ and $g_1,g_2\in\G$, induces isomorphisms
\begin{equation}
\label{iso-gpd}
\begin{split}
\Omega_{s,t}:C^\infty_c(\G)\otimes^{rl}C^\infty_c(\G)&\stackrel{\cong}{\longrightarrow}C^\infty_c(\G
\, \qttr
\times s t {}{M} \, \G) =  C^\infty_c(\G_2),\\
\Omega_{t,t}:C^\infty_c(\G)\otimes^{ll}C^\infty_c(\G)&\stackrel{\cong}{\longrightarrow}C^\infty_c(\G
\, \qttr \times t t {}{M} \, \G)\\
\Omega_{s,s}:C^\infty_c(\G)\otimes^{rr}C^\infty_c(\G)&\stackrel{\cong}{\longrightarrow}C^\infty_c(\G
\, \qttr \times s s {}{M} \, \G)\\
\end{split}
\end{equation}
That these maps are indeed isomorphisms
can be derived from a more general result on sheaves in
\cite[p.\ 271]{Mrc:ODBEGAHA}. With this, we define the left coproduct 
$\Delta_l:C^\infty_c(\G)\to
C^\infty_c(\G)\otimes^{ll}C^\infty_c(\G)\cong C^\infty_c(\G \, \qttr
\times t t {}{M} \, \G)$
by the formula
\[
\Delta_lf(g_1,g_2):=\begin{cases} f(g_1)&\mbox{if}~g_1=g_2,\\ 0&\mbox{else}.\end{cases}
\]
Alternatively, this is simply the push-forward along the diagonal
inclusion $d^l:\G\to \G \, \qttr \times t t {}{M} \, \G$, ~$g\mapsto (g,g)$. 
In a similar fashion, the right coproduct is defined as $\Delta_r=d^r_*$, where 
$d^r:\G\to\G \, \qttr \times s s {}{M} \, \G$ is again the diagonal.
Left and right counit are defined as the push-forward along the target resp.\ source map:
\[
(\epsilon_lf)(x):=\sum_{t(g)=x}f(g) \qquad \mbox{and} \qquad
(\epsilon_rf)(x):=\sum_{s(g)=x}f(g).
\]
Finally, the antipode $S:C^\infty_c(\G)\to C^\infty_c(\G)$ is given by the groupoid inversion:
\[
(Sf)(g):=f(g^{-1}).
\]
\begin{proposition}
When $M$ is compact, $C^\infty_c(\G)$ is a Hopf
algebroid over $\cinf M$ by means of
the structure maps mentioned above.
\end{proposition}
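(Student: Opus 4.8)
The plan is to verify directly that the data specified above satisfy each of the axioms of Definition \ref{def-hopf-algbd}, together with the two underlying bialgebroid axioms of Definition \ref{left-bialg}. The compactness of $M$ enters at the very start: it guarantees $\cinf M=\cinfc M$, so that the base is the correct algebra, and---more importantly---it makes the convolution algebra $\cinfc\G$ \emph{unital}, the unit being $u_*(1_M)$ (the indicator of the units, which is compactly supported precisely because $M$ is compact). Thus all four maps $s_l=t_l=s_r=t_r=u_*$ are genuine unital algebra homomorphisms into a unital algebra, and the $(s,t)$-ring structures make sense. The one structural tool underlying everything is the interplay of push-forward with composition and base change: for \'etale maps one has functoriality $(f\circ g)_*=f_*\circ g_*$, and for a Cartesian square of \'etale maps the push-forward along one leg commutes with the pullback along the other (the Beck--Chevalley formula). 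Because every structure map of an \'etale groupoid is a local diffeomorphism, these formulae apply throughout.

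With this in hand, the first block of work is to check that $(\cinfc\G, \cinf M, u_*, u_*, \Delta_l, \epsilon_l)$ is a left bialgebroid and that the analogous right structure is a right bialgebroid. Under the identifications \rmref{iso-gpd}, the convolution product \rmref{convolution} is exactly push-forward along the multiplication $m$, the coproduct $\Delta_l$ is $d^l_*$, and the counit $\epsilon_l$ is $t_*$. Coassociativity of $\Delta_l$ then reduces to the equality of the two diagonal embeddings of $\G$ into the triple fibre product, counitality to the identities $t\circ d^l$ and the unit relations of the groupoid, and the bialgebroid compatibility---that $\Delta_l$ maps into the Takeuchi product (page \pageref{taki}) and is an algebra map there---to the commuting square relating the diagonal $d^l$ and the multiplication $m$. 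Each of these is a geometric identity that follows mechanically from functoriality and base change. The right bialgebroid is handled identically with $d^r$ and $s$ in place of $d^l$ and $t$.

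The second block is the specifically Hopf-algebroid part. Axiom \rmref{Subr} is immediate since all four of $s_l,t_l,s_r,t_r$ are the single map $u_*$ and $t\circ u=s\circ u=\id_M$. Twisted coassociativity \rmref{TwCoAssoc} becomes the compatibility of the two diagonals $d^l,d^r$ on the triple fibre product and again follows from base change. The conjugation rule $S(t_l(a_1)h\,t_r(a_2))=s_r(a_2)S(h)s_l(a_1)$ translates, via $(Sf)(g)=f(g^{-1})$, into the statement that inversion is an anti-automorphism with $i\circ u=u$. Finally the antipode axioms \rmref{TwAp} are the heart of the matter: unwinding $\mu_\calH(S\otimes\id)\Delta_l$ through $\Omega$, the definition of $d^l$, and inversion, one computes at a point a sum over the fibre $\{g : g^{-1}g=x\}$, i.e.\ over the source fibre, which is exactly $s_r\epsilon_r=u_*s_*$; dually $\mu_\calH(\id\otimes S)\Delta_r=s_l\epsilon_l$ encodes $g\,g^{-1}=t(g)$. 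Since $S^2=\id$ here (inversion is involutive), by Theorem \ref{CyclThm} both cyclic theories will in fact be genuinely (co)cyclic.

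The main obstacle is not any single deep step but the disciplined bookkeeping of the six bimodule structures and the three fibre-product identifications $\Omega_{s,t},\Omega_{t,t},\Omega_{s,s}$, and the correct decomposition, in each axiom, of a composite ``comultiply-then-multiply'' operation as a push-forward along a map that must first be factored through the appropriate Cartesian square before base change can be applied. Once each axiom has been rewritten as an identity between push-forwards along groupoid structure maps, its verification is routine, reducing to the defining associativity, unit, and inverse properties of $\G$ together with the functoriality of $f_*$.
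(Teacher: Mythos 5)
Your proposal is correct, but it is organised differently from the paper's proof. The paper makes the bialgebroid part a citation: it invokes \cite[Prop.\ 2.5]{Mrc:ODBEGAHA} for the fact that $(\cinfc \G, \cinf M, \gD_l, \eps_l)$ is a left bialgebroid with an antipode having the stated properties, obtains the right bialgebroid structure in one line by replacing $\G$ with $\G^{\op}$, and then only verifies the \emph{mixed} axioms of Definition \plref{def-hopf-algbd}---those intertwining the left and right structures---doing one antipode identity of \rmref{TwAp} explicitly and leaving the rest to the reader. You instead re-derive the whole left (and, in parallel, right) bialgebroid structure from scratch, systematising everything through functoriality of push-forward along \'etale maps and base change for Cartesian squares; this buys self-containedness and a uniform mechanism for all the coassociativity/counitality/Takeuchi checks, at the cost of length, whereas the paper buys brevity through the citation and the opposite-groupoid trick. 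Where the two arguments genuinely overlap---the Hopf-specific content---they coincide: your observation that $\mu_\calH(S\otimes\id)\gD_l$ evaluates to a sum over the fibre $\{g : g^{-1}g = x\}$, i.e.\ the source fibre, hence equals $u_*s_* = s_r\eps_r$, is exactly the paper's displayed computation (in its mirror-image form $f_{(1)}*S(f_{(2)})$, a sum over $\{g_1 : g_1g_1^{-1}=g\}$ giving $s_l\eps_l$), and your treatment of \rmref{Subr} via $s\circ u = t\circ u = \id_M$ and of axiom \textit{iii}\,) via inversion exchanging $s$ and $t$ matches what the paper declares obvious or leaves to the reader. Your opening remark on compactness (unitality of both $\cinf M$ and $\cinfc\G$, with unit $u_*(1_M)$) is also precisely the paper's.
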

\begin{proof}
We remark that compactness of $M$ is needed in order to make both
algebras $C^\infty_c(M)$ and $C^\infty_c(\G)$ unital. 
The fact that $(C^\infty_c(\G),C^\infty(M),\Delta_l,\eps_l)$ 
is a left bialgebroid 
having an antipode $S$ with certain properties was already shown in \cite[Prop.\
  2.5]{Mrc:ODBEGAHA}. The right bialgebroid structure follows at once by replacing $\G$ by its opposite $\G^{\op}$.
It remains to verify the Hopf algebroid axioms in which left and right
bialgebroid structures are intertwined: for example,
twisted coassociativity is obvious. As for the second
identity in \rmref{TwAp}, let $f\in C^\infty_c(\G)$ and compute
\[
\begin{split}
(f_{(1)}*S(f_{(2)}))(g)&=\sum_{g_1g_2=g}f_{(1)}(g_1)f_{(2)}(g_2^{-1})
  \\
&=\sum_{g_1g_1^{-1}=g}f(g_1)\\
&=\begin{cases} 
\sum_{t(g_1) = x} f(g_1)  \ & \hbox{if} \ g = 1_x \ \hbox{for some} \ x \in M, \\
         0  \quad & \mbox{else}
\end{cases}\\
&=(s_l\epsilon_lf)(g).
\end{split}
\]
The remaining identities in Definition
\ref{HAlgd} are left to the reader.
\end{proof}
\subsubsection{Cyclic cohomology}
The Hopf-cyclic cohomology of this example is easily computed:
\begin{proposition}
The Hopf-Hochschild cohomology of $C^\infty_c(\G)$ is trivial
except in \mbox{degree 0,} i.e.,
\[
HH^\bull(C^\infty_c(\G))\cong
\begin{cases}
\cinf M &\bullet=0,\\
0&\mbox{else}.
\end{cases}
\]
Hence, for the (periodic) Hopf-cyclic cohomology of $C^\infty_c(\G)$
one has
$$
HP^0(\cinfc \G) \cong \cinf M, \qqquad HP^1(\cinfc \G) \cong 0.
$$
\end{proposition}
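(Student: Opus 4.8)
The plan is to read off the Hopf-Hochschild cohomology from the cocyclic module $C^\bull(\cinfc{\G})$ of \S\ref{basic}, after transporting it to a geometric complex by means of the isomorphisms \rmref{iso-gpd}. Since source and target maps all coincide with the unit push-forward, $s_l=t_l=u_*$, the $(A_l,A_l)$-bimodule structure \rmref{bimod-lmod} underlying $C^\bull$ only records the common target, so I would first identify
\[
C^n(\cinfc{\G})\;\cong\;\cinfc{X_n},\qquad
X_n:=\{(g_1,\dots,g_n)\in\G^{n}\mid t(g_1)=\cdots=t(g_n)\},
\]
the $n$-fold fibre product of $\G$ along $t$, with $X_0=M$ and $C^0=A_l=\cinf{M}$. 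Under this identification the inner cofaces $\delta_i$ ($1\le i\le n$) become push-forward along the diagonals (the content of $\gD_l=d^l_*$), the codegeneracies $\sigma_i$ become fibrewise summation (the content of $\eps_l=t_!$), and $\delta_0,\delta_{n+1}$ insert a unit at either end via the unit section $u\colon M\to\G$. In other words $X_\bull$ is the \v{C}ech nerve of $t\colon\G\to M$, and $(C^\bull(\cinfc{\G}),b)$ is obtained from it by the covariant functor $\cinfc{(-)}$ together with push-forward, which is well defined because $\G$ is \'etale (so the $t$-fibres are discrete and the sums finite on compactly supported functions) and $M$ is compact.

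The degree-zero group is immediate: by \rmref{CoFaceA} the differential $C^0\to C^1$ equals $\delta_0-\delta_1=t_l-s_l=u_*-u_*=0$, whence $HH^0(\cinfc{\G})=\ker(b\colon C^0\to C^1)=A_l=\cinf{M}$. For the vanishing in positive degrees the key geometric input is that the unit section is a section of the target, $t\circ u=\id_M$; this makes the augmented \v{C}ech nerve $X_\bull\to M$ simplicially contractible. Equivalently, on the coalgebra side every delta-function $\delta_g$ is grouplike for $\gD_l$, so that $\calH_l$ is an $\cinf{M}$-coalgebra ``spanned by grouplikes'', exactly as a group algebra is; and the Hochschild cohomology of such a coalgebra ought to be concentrated in degree zero.

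To make this rigorous I would invoke Theorem \ref{hoch-tor}, which gives $HH^\bull(\cinfc{\G})\cong\Cotor^\bull_{\calH}(A_l,A_l)$ once one checks that $\cinfc{\G}$ is flat as a right $A_l$-module \rmref{bimod-lmod} (this holds because locally, over open bisections, $t$ is a diffeomorphism, so $\cinfc{\G}$ is a filtered colimit of free $\cinf{M}$-modules). It then suffices to prove that the fixed argument $A_l=\cinf{M}$, regarded as a right $\calH_l$-comodule via $t_l$, is coflat --- e.g.\ relative injective --- for then $A_l\bx_{\calH_l}(-)$ is exact and $\Cotor^{>0}$ vanishes, while $\Cotor^0=A_l\bx_{\calH_l}A_l=HH^0=\cinf{M}$ by the previous paragraph. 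Relative injectivity I would establish by exhibiting a colinear retraction of the coaction, namely the restriction to units $u^\ast\colon\cinfc{\G}\to\cinf{M}$, $h\mapsto\{x\mapsto h(1_x)\}$; one has $u^\ast\circ t_l=\id$, and its compatibility with the coactions is precisely the grouplike property $\gD_l\delta_g=\delta_g\otimes\delta_g$. Finally, once $HH^\bull$ is concentrated in degree zero the operator $B$ vanishes on $b$-cohomology for degree reasons, so Connes' $SBI$-sequence (equivalently, degeneration of the $(b,B)$-bicomplex, cf.\ \cite{Lod:CH}) collapses and yields $HP^0(\cinfc{\G})\cong\cinf{M}$ and $HP^1(\cinfc{\G})\cong0$.

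The main obstacle is the relative-injectivity step. A naive attempt to contract $(C^\bull,b)$ by the codegeneracy $\sigma_0$ fails: the cosimplicial identities only give $\sigma_0 b+b\sigma_0=\delta_0\sigma_0$, an idempotent projecting onto the grouplike part rather than the identity, which is exactly the manifestation of the fact that for a general $A_l$-coalgebra $\Cotor^\bull(A_l,A_l)$ need not be trivial. The content of the proof is therefore to verify that the retraction $u^\ast$ is genuinely a morphism of $\calH_l$-comodules --- equivalently, that the extra degeneracy coming from the section $u$ survives the passage to the cotensor complex --- and this is where the specific groupoid (grouplike) structure, as opposed to the formal bialgebroid axioms, is indispensable.
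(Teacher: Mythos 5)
Your proposal is correct, and it reaches the conclusion by a route that is recognisably different in its homological packaging from the paper's, although both turn on the same geometric fact. The paper, generalising Crainic's computation for group algebras, writes down an explicit $2$-periodic resolution of $\cinf M$ by cofree left $\cinfc\G$-comodules,
\[
0 \longrightarrow \cinf M \stackrel{u_*}{\longrightarrow} \cinfc\G
\stackrel{\ga}{\longrightarrow} \cinfc\G
\stackrel{\gb}{\longrightarrow} \cinfc\G
\stackrel{\ga}{\longrightarrow} \cdots,
\qquad \ga(f)=f-f|_M,\quad \gb(f)=f|_M,
\]
and applies $\cinf M\bx_{\cinfc\G}(-)$ to it, obtaining the complex $0\to\cinf M\stackrel{0}{\to}\cinf M\stackrel{\id}{\to}\cinf M\to\cdots$, from which $HH^\bull$ is read off; $HP^\bull$ then follows by the same $SBI$ argument you give. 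You instead leave the second argument of $\Cotor^\bull_{\calH}(A_l,A_l)$ untouched and prove that the \emph{first} argument, $A_l$ with right coaction $t_l$, is relative injective, split off the cofree comodule $\cinfc\G$ by the colinear retraction $u^*$; this kills $\Cotor^{>0}$ in one stroke, while $\Cotor^0=A_l\bx_{\calH_l}A_l=\cinf M$ is your (correct) degree-zero computation, and your geometric identification of $C^\bull$ via $\Omega_{t,t}$ matches the paper's \rmref{iso-gpd}. The two mechanisms rest on literally the same input: the idempotent $f\mapsto f|_M=t_l(u^*f)$ is a morphism of comodules because $\gD_l$ is push-forward along the diagonal -- the paper uses this (implicitly) to make $\ga,\gb$ colinear, you use it to split the coaction of $A_l$, and you rightly single out this verification as the crux. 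What your route buys is conceptual economy: it exhibits $\cinfc\G$ as behaving like a relatively cosemisimple $\cinf M$-coalgebra, exactly as for group algebras, with no need to check exactness of an ad hoc periodic complex; what the paper's route buys is that vanishing is read off from a complex with only $0$ and $\id$ as differentials, with no appeal to properties of the cotensor functor. Two caveats, both at the level of rigour of the paper itself: first, your phrase ``$A_l\bx_{\calH_l}(-)$ is exact'' should be read as exactness on $A_l$-split exact complexes of comodules, which is all that is needed (relative injective resolutions such as the cobar resolution are $A_l$-split, $\cinfc\G\bx_{\calH_l}I^\bull\cong I^\bull$ is then exact, and a natural direct summand of an exact complex is exact); as a statement about arbitrary exact sequences it would be too strong. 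Second, both proofs tacitly rely on the flatness hypothesis of Theorem \ref{hoch-tor}; you at least gesture at why it holds for an \'etale groupoid, which the paper does not.
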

\begin{proof}
Generalising a construction in \cite{Cra:CCOHA} for group algebras,
define the following periodic resolution of $\cinf M$ by cofree (left)
$\cinfc \G$-comodules:
$$
I: \quad 0 \longrightarrow \cinf M 
\stackrel{u_*}{\longrightarrow} \cinfc \G 
\stackrel{\ga}{\longrightarrow} \cinfc \G
\stackrel{\gb}{\longrightarrow} \cinfc \G 
\stackrel{\ga}{\longrightarrow} \ldots,
$$
where $\ga(f) := f - f|_M$ and $\gb(f) := f|_M$. According to
 Theorem \ref{hoch-tor}, the Hochschild
cohomology groups are 
computed by $\cinf M
\bx_{\cinfc \G} I$, i.e.\ by means of 
$0 \longrightarrow \cinf M 
\stackrel{0}{\longrightarrow} \cinf M 
\stackrel{\id}{\longrightarrow} \cinf M
\stackrel{0}{\longrightarrow} \cinf M 
\stackrel{\id}{\longrightarrow} \ldots$. Then one has $HH^n(\cinfc
\G)=\cinf M$ for $n=0$ and zero in all other cases. Applying an
$SBI$ sequence argument, the second statement follows.
\end{proof}

\subsubsection{Cyclic homology and groupoid homology}
For the dual homology theory, consider the nerve $\G_\bull :=
\{\G_n\}_{n \geq 0}$ of $\G$ defined as usual
\[
\G_0 := M, \qquad  \G_n :=\{(g_1,\ldots,g_n)\in \G^{\times n},~s(g_i)=t(g_{i+1}),~1\leq i\leq 0\},
\]
equipped with face operators $d_i:\G_n\to\G_{n-1}$ defined by
\begin{subequations}
\begin{equation}
\label{NervD}
d_i(g_1,\ldots,g_n)=
\begin{cases} 
(g_2,\ldots,g_n) & \mbox{if} \ i=0, \\ 
(g_1,\ldots,g_ig_{i+1},\ldots,g_n) &  \mbox{if} \ 1\leq i\leq n-1, \\ 
(g_1,\ldots,g_{n-1}) & \mbox{if} \ i=n,
\end{cases}
\end{equation}
whereas $d_0, d_1: \G_1 \to \G_0$ are given by source and target map,
respectively. Equipped with degeneracies $s_i: \G_n \to
\G_{n+1}$ given as
\begin{equation}
\label{NervS}
s_i(g_1, \ldots, g_n)  = \left\{
\begin{array}{ll}
(1_{t(g_1)}, g_1, \ldots, g_n) 
& \mbox{if} \ i = 0, \\
(g_1, \ldots, g_i, 1_{s(g_i)}, g_{i+1}, \ldots, g_n)  
 & \mbox{if} \ 1 \leq i \leq n,
\end{array}\right.
\end{equation}
\end{subequations}
the nerve is a simplicial manifold whose geometric realisation is a model
for the classifying space $B\G$. Denote by $\tau_n:\G_n\to M$ the
map $\tau_n(g_1,\ldots,g_n)=t(g_1)$. 
Given a representation $E$ of $\G$, that is a vector bundle $E$ over $M$ equipped with an action of $\G$,  define
\[
C_n^{d}(\G;E):=\Gamma_c(\G_n,\tau_n^*E).
\]
This space of chains carries a differential $\partial:C_n^{d}(\G;E)\to C^{d}_{n-1}(\G;E)$ given by
$$
\partial:=\sum_{i=0}^n(-1)^i(d_i)_*,
$$
where the push-forward is defined with respect to the tautological isomorphisms
$\tau_n^*E\cong d_i^* \tau_{n-1}^*E$ for $1\leq i\leq n$ 
and $g_1: E_{s(g_1)}\to E_{t(g_1)}$ is the isomorphism
$\tau_n^*E\cong d_0^*\tau_{n-1}^*E$ at $(g_1,\ldots,g_n)\in\G_n$. 
This defines a differential because of the simplicial identities of
the underlying face maps, and its homology 
is the groupoid homology of $E$, denoted as
$H^{d}_\bull(\G,E)$, cf.\ \cite{CraMoe:AHTFEG}.

\begin{theorem}
\label{cycl-etale}
Let $\G$ be an \'etale groupoid. There are natural isomorphisms
\[
\begin{split}
HH_\bull(C^\infty_c(\G))&\cong H^d_\bull(\G,\underline{\C}),\\
HC_\bull(C^\infty_c(\G))&\cong  \underset{n \geq
  0}{\textstyle{\bigoplus}} H^d_{\bull+2n}(\G,\underline{\C}).
\end{split}
\]
\end{theorem}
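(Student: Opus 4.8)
The plan is to identify the dual Hopf-cyclic homology of $\cinfc\G$ with the simplicial homology of the nerve, and then to bootstrap from Hochschild to cyclic homology. First I would make the isomorphisms \rmref{iso-gpd} the backbone of the argument: iterating $\Omega$ gives, for each $n$, a canonical identification
\[
C_n(\cinfc\G)=\cinfc\G\otimes_{A_r}\cdots\otimes_{A_r}\cinfc\G\;\cong\;\cinfc(\G_n),
\]
since the tensor product $\otimes_{A_r}$ over $A=\cinf M$ with bimodule structure \rmref{H-comod} is exactly $\otimes^{rr}$, which the multi-variable version of $\Omega$ turns into compactly supported functions on the $n$-fold fibre product $\G_n$. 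Under this identification I expect the face maps \rmref{DualHH}, the degeneracies, and in particular the partial multiplications $h^ih^{i+1}$ (convolution \rmref{convolution}) to go over precisely to the push-forwards $(d_i)_*$ of the nerve maps \rmref{NervD}–\rmref{NervS}; the counit terms $\eps_r$ and $\eps_r\circ S^{-1}$ will produce the two outer faces, source and target, matching the description $d_0,d_1:\G_1\to\G_0$. Because the coefficient bundle here is the trivial bundle $\underline\C$, the tautological isomorphisms in $\partial$ are all identities, so the Hochschild differential $b=\sum(-1)^i d_i$ maps to $\partial=\sum(-1)^i(d_i)_*$. This yields the first isomorphism $HH_\bull(\cinfc\G)\cong H^d_\bull(\G,\underline\C)$.

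For the second isomorphism I would pass to the full cyclic theory via the $SBI$-sequence and the mixed complex $(C_\bull(\cinfc\G),b,B)$. Having matched $b$ with $\partial$, it remains to understand the Connes operator $B$ coming from the cyclic operator \rmref{DualCyc}. The natural approach is to show that on $\cinfc(\G_n)$ the cyclic operator $t_n$ is homotopic to the identity, or more precisely that the cyclic action it generates is trivial at the level of homology, exactly as in the group-algebra computation of Karoubi and Crainic cited in \S\ref{baldvier}. Concretely, since the nerve is the nerve of a category, one expects the cyclic structure on the homology of $B\G$ to collapse so that the $B$-operator vanishes on $HH_\bull$; then the spectral sequence of Tsygan's bicomplex degenerates and one reads off
\[
HC_\bull(\cinfc\G)\cong\bigoplus_{n\geq0}HH_{\bull-2n}(\cinfc\G)\cong\bigoplus_{n\geq0}H^d_{\bull+2n}(\G,\underline\C).
\]
Alternatively, if $\G$ is cocommutative-like enough to invoke Proposition \ref{bar-cyclic}(ii) or Theorem \ref{cocomm-cycl}(ii) directly, the splitting is immediate; but since $\cinfc\G$ is noncommutative I expect a direct homotopy argument to be cleaner.

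The main obstacle will be the careful bookkeeping in the first step: verifying that \emph{all} the structure maps of the cyclic module $C_\bull(\cinfc\G)$ — not just the inner convolution faces but the two boundary faces involving $\eps_r$, $S^{-1}$, and the cyclic operator $t_n$ — transport correctly through the iterated $\Omega$-isomorphism onto the nerve operators. In particular one must check that the push-forward conventions and the support conditions are compatible across the fibre products $\G\times^{rr}_M\cdots\times^{rr}_M\G$, and that the composite $\eps_r\circ S^{-1}=\eps_l$ on the last factor really yields the target-map face rather than the source-map one. The second obstacle, more conceptual, is showing the vanishing of the reduced cyclic action on homology; this is where the Hausdorff and étale hypotheses enter, guaranteeing that the push-forward along degeneracies provides the requisite contracting homotopy, so that the periodicity operator $S$ in the $SBI$-sequence acts as claimed and the cyclic homology splits as a direct sum of shifted groupoid homologies.
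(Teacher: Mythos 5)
Your treatment of the Hochschild part is essentially the paper's own proof: iterate the isomorphisms \rmref{iso-gpd} to identify $C_\bull(\cinfc\G)\cong\cinfc(\G_\bull)$ and check that the operators \rmref{DualHH} become the push-forwards of the nerve maps \rmref{NervD}--\rmref{NervS}, using $\eps_r=s_*$ and $\eps_r\circ S^{-1}=\eps_l=t_*$. One slip in the bookkeeping: the tensor product $\otimes_{A_r}$ built from the bimodule structure \rmref{H-comod} is $\otimes^{rl}$ (handled by $\Omega_{s,t}$), which is exactly why one lands on the nerve fibre products $s(g_i)=t(g_{i+1})$; it is \emph{not} $\otimes^{rr}$, which would give the fibre product over $(s,s)$ rather than the nerve.

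The genuine gap is in the cyclic part. You set aside the route via Theorem \ref{cocomm-cycl}(ii) ``since $\cinfc\G$ is noncommutative'', but that theorem requires \emph{cocommutativity}, not commutativity of the algebra, and $\cinfc\G$ \emph{is} cocommutative: the base algebra is commutative with $s_l=t_l=s_r=t_r=u_*$, and the coproduct $\Delta_l=d^l_*$ is the push-forward along the diagonal, hence manifestly symmetric; the noncommutativity of the convolution product is irrelevant. This is precisely how the paper concludes: the splitting of $HC_\bull$ is an immediate application of Theorem \ref{cocomm-cycl}(ii), whose proof (para-cyclic structure on $\Br_\bull(\calH)$ from Proposition \ref{bar-cyclic}(ii) plus hyper-derived functors) carries the real content. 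The substitute argument you sketch does not close the gap: first, you give no actual proof that the cyclic action, or the Connes operator $B$, is homologically trivial---only an analogy with Karoubi's computation; second, even granting that $B$ induces zero on $HH_\bull$, i.e.\ that $d^1$ vanishes on the $E^1$-page of the column filtration of Tsygan's bicomplex, this does \emph{not} force degeneration---higher differentials $d^2,d^3,\dots$ (Massey-type operations) can survive. To obtain the splitting one needs more, e.g.\ a quasi-isomorphism of mixed complexes onto one with vanishing $B$-operator (as in Theorems \ref{LRCyclCohom} and \ref{jet-hom}), or the hyper-$\Tor$ argument of Theorem \ref{cocomm-cycl}. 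As written, the second isomorphism is therefore not established; the fix is simply to observe cocommutativity and invoke Theorem \ref{cocomm-cycl}(ii).
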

\begin{proof}
The obvious generalisation of the isomorphism \eqref{iso-gpd} to
higher degrees yields
\begin{equation}
\label{tonsille}
\gO^n_{s,t}:
C_\bull(C^\infty_c(\G))\stackrel{\cong}{\longrightarrow}
C_c^\infty(\G_\bull)
=C^d_\bull(\G,\underline{\C}),
\end{equation}
where $\underline{\C}$ denotes the trivial representation on the line bundle $M\times \C$.
To identify the differential, remark that the convolution product \eqref{convolution} is simply 
the push-forward along the multiplication map $m:\G_2\to\G$, and right
and left counit the push-forwards along source and target maps, i.e.,
$\eps_r = s_*$, $\eps_r \circ S^{-1} = \eps_l = t_*$. It is then a
straightforward check that the isomorphism \rmref{tonsille}
intertwines the simplicial maps \rmref{DualHH} with
the push-forwards along the face 
operators \eqref{NervD} on $\G_\bull$, and this identifies the differential
with the groupoid homology differential $\partial$. This proves the first assertion. The second follows 
from Theorem \ref{cocomm-cycl}.
\end{proof}
\begin{remark}
In particular, the isomorphism \rmref{tonsille} is an isomorphism of
cyclic modules: the operators $\tilde{t}_n: \G_n \to \G_n$,
\begin{equation}
\label{NervT}
\tilde{t}_n(g_1, \ldots, g_n) :=((g_1 g_2 \cdots g_n)^{-1}, g_1, \ldots, g_{n-1})
\end{equation}
for $n \geq 2$, and $t_1(g) := \rcp g$, $t_0 := \id_{\G_0}$
define a cyclic operation on $\G_\bull$, such that $\cinfc \G$
together with the push-forwards of \rmref{NervD}, \rmref{NervS}, and
\rmref{NervT} becomes a cyclic module. One then has with
respect to the dual Hopf-cyclic operator \rmref{DualCyc}:
\begin{equation*}
\begin{split}
(\gO^n_{s,t} \, & t_n (f^1 \otimes^{rl} \cdots
  \otimes^{rl} f^n))(g_1, \ldots,
  g_{n}) = \\
&= \gO^n_{s,t} \big( \rcp S( f^{1}_{(2)} * \cdots * f^{n-1}_{(2)} * f^n)
  \otimes^{rl}  f^1_{(1)}    
\otimes^{rl} \cdots
  \otimes^{rl} f^{n-1}_{(1)}\big)(g_1, \ldots,
  g_{n}) \\
&= \sum_{g_1^{-1} = g'_1 \cdots  g'_n } f^{1}_{(2)}(g'_1) \cdots
  f^{n-1}_{(2)}(g'_{n-1}) f^n(g'_n) f^{1}_{(1)}(g_2) \cdots f^{n-1}_{(1)}(g_n) \\ 
& = f^1(g_2) \cdots f^{n-1}(g_n) f^n((g_1 \cdots g_n)^{-1}),
\end{split}
\end{equation*}
and this is exactly the push-forward of $\tilde{t}_n$.
\end{remark}
\begin{remark}
The first isomorphism of the theorem above readily generalises as
follows: let $E$ be a representation of $\G$. 
Then $\mathcal{E}:=\Gamma_c(M,E)$ is a module over $C^\infty_c(\G)$ by the action
\[
(f\cdot \varphi)(x)=\sum_{t(g)=x}f(g)\varphi(s(g)),
\]
where $f\in C^\infty_c(\G)$ and $\varphi\in\Gamma_c(M,E)$. With this module, we have
\[
H_\bull(C^\infty_c(\G);\mathcal{E})\cong H_\bull^d(\G,E).
\]
\end{remark}
\begin{remark}
Analogously as in group theory, a little computation reveals that the
Hopf-Galois map from Lemma \ref{hopf-galois}
and its inverse 
are (via the isomorphisms
\rmref{iso-gpd}) the push-forwards of the following maps on the groupoid
level:
$$
\tilde{\varphi}_n: \G_n \to \G^n, \quad (g_1, \ldots, g_n) \mapsto (g_1,
g_1g_2, \ldots, g_1g_2 \cdots g_n),
$$
where $\G^n :=\{(g_1,\ldots,g_n)\in \G^{\times
  n},~t(g_i)=t(g_{i+1}),~1\leq i\leq 0\}$, with inverse
$$
\tilde{\psi}_n: \G^n \to \G_n, \quad ({g}_1, \ldots, {g}_n)
\mapsto 
({g}_1, {g}^{-1}_1 {g}_2, \ldots, {g}^{-1}_{n-1} {g}_n).
$$
\end{remark}
\subsubsection{Relation with the computations of Brylinski-Nistor and Crainic} 
In \cite{BryNis:CCOEG,Cra:CCOEGTGC} the cyclic homology of $C^\infty_c(\G)$ as an
algebra, 
i.e.\ not as a Hopf algebroid, was computed.
Let us show how the present result fits into that computation. 
A fundamental tool in the papers mentioned above was 
the ``reduction to loops''
\begin{equation}
\label{red-loops}
C^\infty_c(\G)^{{\scriptscriptstyle{{\rm alg}}}, \natural}_n\to \Gamma_c\left(B_n,\tau_n^{-1}\calC^\infty_{M^{\times(n+1)}}\right),
\end{equation}
where on the left hand side we have the usual cyclic object associated
to an algebra 
(but using topological tensor products).
The space $B_n$ above is the so-called higher Burghelea space of
closed strings of $n+1$ composable arrows
\[
B_n:=\{(g_0,\ldots,g_n) \in \G^{\times (n+1)} \mid t(g_i) =
s(g_{i - 1}) \ \mbox{for} \  1 \leq i \leq n, \ \mbox{and} \ t(g_0) = s(g_n) \},
\]
and $\tau_n:B_n\to M^{\times(n+1)}$ is here the map $\tau_n(g_0,\ldots,g_n)=(t(g_0),\ldots,t(g_0))$.
This is a simplicial space
by defining face  operators $d'_i: B_n \to B_{n-1}$,
\begin{equation*}
\begin{array}{rcll}
\label{BurDS}
d'_i(g_0, g_1, \ldots, g_n)  &=& \left\{
\begin{array}{l}
(g_0, \ldots, g_i g_{i+1}, \ldots, g_n)  
\\
(g_n g_0, g_1, \ldots, g_{n-1}) 
\end{array}\right. & \begin{array}{l} \mbox{if} \ 0 \leq i
  \leq n -1,   \\ \mbox{if} \ i = n, \end{array}\\
  \end{array}
\end{equation*}
and degeneracy operators $s'_i: B_n \to B_{n+1}$,
\begin{equation*}
\begin{array}{rcll}
s'_i(g_0, g_1, \ldots, g_n)  &=& \left\{
\begin{array}{l}
(g_0, \ldots, g_i, 1_{t(g_{i+1})}, g_{i+1}, \ldots, g_n) 
\\
(g_0, \ldots, g_n, 1_{s(g_n)})  
\end{array}\right. & \begin{array}{l} \mbox{if} \ 0 \leq i
  \leq n-1,  \\ \mbox{if} \ i = n. \end{array}
\end{array}
\end{equation*}
Furthermore, it has a cyclic operator 
$t'_n: B_n \to B_n$
defined by 
\begin{equation*}
\label{BurT}
t'_n(g_0, \ldots, g_n) =(g_n, g_0, \ldots, g_{n-1}),
\end{equation*}
turning $B_\bull$ into a cyclic object in the category of manifolds.
The map \eqref{red-loops} is a morphism of cyclic objects if we equip the right hand side 
with the cyclic structure induced by $B_\bull$, together with the (twisted) cyclic structure of the
cyclic object $(\calC^\infty_M)^\natural$ in the category of sheaves on $M$. This is the diagonal of
a bicyclic complex which is quasi-isomorphic to its total complex. 
On the level of Hochschild homology, this is the Eilenberg-Zilber
theorem (see, for example, \cite[Thm.\ 8.5.1]{Wei:AITHA}) which---in one direction---is implemented by the Alexander-Whitney map.
Applying the HKR~map on the level of sheaves, one eventually finds
\[
HH_\bull(C^\infty_c(\G))\cong
\underset{{p+q=\bull}}{\textstyle{\bigoplus}} H_p\left(\Lambda(\G),\Lambda^qT^*B_0\right).
\]
The groupoid $\Lambda(\G):=B_0\rtimes\G$ is disconnected in general, 
which induces a decomposition of the Hochschild and cyclic 
homology. The component $\G\subseteq\Lambda(\G)$ is called the unit component, and for this one finds
\begin{equation}
\label{loc-unit}
HH_\bull(C^\infty_c(\G))_{[1]}\cong
\underset{p+q=\bull}{\textstyle{\bigoplus}} H_p\left(\G,\Lambda^qT^*M\right).
\end{equation}

We compare this with the Hopf-cyclic
theory as follows:
using the isomorphisms \eqref{iso-gpd}, one has for the fundamental
space from \S \ref{invariants}
\[
B_n(C^\infty_c(\G))\cong C^\infty_c(B_n).
\]
One easily checks that the induced simplicial and cyclic operators are equal to the 
push-forwards along the simplicial and cyclic maps on $B_\bull$ as above. In a similar spirit, the invariant map 
$C^\infty_c(\G_n)\hookrightarrow C^\infty_c(B_n)$ from \eqref{invariant} 
is induced by the morphism 
\[
\G_n \to B_n, \quad (g_1,\ldots,g_n)\mapsto((g_1\cdots g_n)^{-1},g_1,\ldots,g_n).
\]
With this, we now see that the map
\[
HH_\bull^{\scriptscriptstyle\rm alg}(C^\infty_c(\G))\to HH^{\scriptscriptstyle\rm Hopf}_\bull(C^\infty_c(\G))
\]
induced by the projection $C^\infty_c(\G)^\natural\to
B_\bull(C^\infty_c(\G))$ is in turn induced by the projection
onto the degree zero component $\bigoplus_{i\geq 0}\Lambda^i
T^*M\to\underline{\C}$ of representations 
of 
\nolinebreak
$\G$. 

\begin{remark}
As remarked in \cite{Cra:CCACCFF}, the dual cyclic homology of a Hopf
{\em algebra} captures the full
``localisation at units'' of the cyclic homology of the underlying algebra. Here we see explicitly that this is 
not the case for Hopf algebroids: the right hand side of \eqref{loc-unit} has far more components than those 
appearing in Theorem 
\nolinebreak
\ref{cycl-etale}.
\end{remark}
\subsection{Lie-Rinehart algebras}
\label{Lie-Rinehart}
Important examples of Hopf algebroids also arise from Lie-Rinehart algebras as we shall now explain:
\subsubsection{Definitions}
Here we briefly recapitulate the basic definitions and properties of
Lie-Rinehart algebras, cf.\ \cite{Rin:DFOGCA, Hue:PCAQ}.
Let $A$ be a commutative algebra over the ground ring $k$, containing
$\mathbb{Q}$. 
A \textit{Lie-Rinehart algebra} over $A$
is a pair $(A,L)$, where $L$ is a $k$-Lie algebra equipped with an
$A$-module structure 
and a morphism of $k$-Lie algebras
$L\to\der_k A, \ X \mapsto \{a \mapsto X(a)\}$ such that
\begin{equation*}
\begin{array}{rcll}
(aX)(b)&=& a \big(X(b)\big), & \qqquad X \in L, \ a, b \in A, \\
{[X,aY]} &=& a[X,Y]+ X(a)Y,     & \qqquad X,Y \in L, \ a \in A.
\end{array}
\end{equation*}
The morphism $L\to\der_k(A)$ is usually referred to as the {\em
  anchor} of $(A,L)$.
For convenience we shall also assume that $A$ is unital in what follows.

A Lie-Rinehart algebra is the
 algebraic analogue of the notion of a
 Lie algebroid in differential geometry. The 
algebraic geometric generalisation is given by a sheaf of Lie
 algebroids, defined over a locally ringed space. 
In fact, a Lie-Rinehart algebra 
defines such a sheaf over the affine scheme ${\rm Spec}(A)$.

A {\em left $(A,L)$-module} over a Lie-Rinehart algebra is a left
$A$-module $M$ which is also a left Lie algebra module over $L$ with action
$X\otimes_k m \mapsto X(m)$ satisfying
\[
\begin{split}
(aX)(m)&=a \big(X(m)\big),\\
X(am)&= X(a)m+aX(m).
\end{split}
\]
Alternatively, we can view a left $(A,L)$-module as an $A$-module $M$ equipped with a flat left
$(A,L)$-connection: this is a map $\nabla^l:M\to\Hom_A(L,M)$ satisfying
\begin{equation}
\label{LeftConn2}
\nabla^l_X(am) = a \nabla^l_X(m) + X(a)m,
\end{equation}
for all $a\in A$, $X\in L$, and $m\in M$. Flatness amounts to the usual condition
\[
[\nabla^l_X,\nabla^l_Y]=\nabla^l_{[X,Y]},
\]
for all $X,Y\in L$. We write $\mathsf{Mod}(A,L)$ for the category of left $(A,L)$-modules.

The \textit{universal enveloping algebra} of a Lie-Rinehart algebra
$(A,L)$ is constructed as follows \cite{Rin:DFOGCA}:
the direct $A$-module sum $A \oplus L$ can be made into a $k$-Lie algebra by
means of the Lie bracket
\[
[(a_1, X_1),(a_2,X_2)]:=\big(X_1(a_2)-X_2(a_1), [X_1, X_2]\big).
\]
Let $\mathscr{U}(A \oplus L)$ denote its universal enveloping algebra and $\mathscr{U}^+(A
\oplus L)$ the subalgebra generated by the canonical image of $A
\oplus L$ in $\mathscr{U}(A \oplus L)$. For $z \in A \oplus L$, denote by $z'$
its canonical image in $\mathscr{U}^+(A \oplus L)$.
The quotient $\V L:= \mathscr{U}^+(A \oplus L)/I$, where $I$ is the two-sided
ideal in $\mathscr{U}^+(A \oplus L)$
generated by the elements $(az)' - a'z'$, $a \in A$,
is called the universal enveloping algebra of the Lie-Rinehart algebra
$(A,L)$. 
It comes equipped with a $k$-algebra morphism $i_A:A\to \mathscr{V}L$, as well as a morphism
$i_L:L\to {\rm Lie}(\mathscr{V}L)$ of $k$-Lie algebras, subject to the conditions
\[
i_A(a) i_L(X) = i_L(aX), \qquad i_L(X) i_A(a) - i_A(a) i_L(X) =
i_A(X(a)), \quad a \in A, \ X \in L.
\]
It is universal in the following sense: 
for any
other triple $(\mathscr{W}, \phi_L, \phi_A)$ of a $k$-algebra
$\mathscr{W}$ 
and two morphisms $\phi_A:A \to \mathscr{W}$, $\phi_L:L \to
{\rm Lie}(\mathscr{W})$ of $k$-algebras and $k$-Lie algebras, respectively, that for all 
$a \in A, \ X \in L$
obey
$$
\phi_A(a) \phi_L(X) = \phi_L(aX), \qquad \phi_L(X) \phi_A(a) -
\phi_A(a) \phi_L(X) = \phi_A(X(a)), 
$$
there is a unique morphism $\Phi: \V L \to \mathscr{W}$
of $k$-algebras such that $\Phi\circ i_A = \phi_A$ and $\Phi\circ i_L = \phi_L$.
This property shows that the
natural functor
$
\mathsf{Mod}(\mathscr{V}L)\to\mathsf{Mod}(A,L)
$
is an equivalence of categories. 
With this, the Lie-Rinehart cohomology of $(A,L)$ with values in a left $(A,L)$-module $M$ is defined as
\begin{equation}
\label{VLExt}
H^\bull(L,M):=\Ext^\bull_{\V L}(A,M).
\end{equation}
{\em The Poincar\'e-Birkhoff-Witt theorem.} The algebra $\mathscr{V}L$ carries a canonical filtration 
\begin{equation}
\label{pbwf}
\V L_{(0)}\subset \V L_{(1)}\subset \V L_{(2)}\subset\ldots
\end{equation}
by defining $\V L_{(-1)}:= 0$, $\V L_{(0)}:=A$ and $\V L_{(p)}$ to be the left $A$-submodule 
of $\V L$
generated by $i_L(L)^p$, i.e.\ products of the image of $L$ in $\V L$ of
length at most $p$. Since $a D - D a \in \V L_{(p-1)}$ for all
$a \in A$ and $D \in \V L_{(p)}$, left and right $A$-module
structures coincide on $\V L_{(p)}/\V L_{(p-1)}$. It follows that 
the associated graded object ${\rm gr} (\V L)$ inherits the structure of a
graded commutative $A$-algebra.

Let $S^\bull_AL$ be the graded symmetric $A$-algebra of $L$ and $S^p_AL$ its  degree $p$ part.
When $L$ is projective over $A$, 
the Poincar\'e-Birkhoff-Witt theorem (cf.\ \cite{Rin:DFOGCA}, and \cite{NisWeiXu:PDOODG}
in the context of Lie algebroids) states that the canonical $A$-linear epimorphism
$
S_AL \to {\rm gr} (\V L)
$
is an isomorphism of $A$-algebras.
While $i_A$ is always injective, in this case even $i_L$ is
injective and we may identify elements $a \in A$ and $X \in L$ with
their images in $\V L$.
Hence, the symmetrisation
\begin{equation*}
\label{Symm}
\pi: S^p_AL \to \V L_{(p)} \quad X_1 \otimes \cdots \otimes X_p \mapsto \frac{1}{p!}\sum_{\gs \in
S_p} X_{\gs(1)} \cdots X_{\gs(p)}
\end{equation*}
where $X_i \in L$ or $X_i \in A$, induces an isomorphism of left $A$-modules $S_AL \to \V L$.
\subsubsection{The associated Hopf algebroid}
The fact that Lie-Rinehart algebras give rise to left bialgebroids in the sense of Definition \ref{left-bialg} 
 by means of their enveloping algebras has been observed 
before in the literature, cf.\ \cite{Xu:QG, KhaRan:CCOEHA, MoeMrc:OTUEAOALRA}. 
In this section we shall determine the extra datum needed to define a Hopf algebroid structure.

In the previous section, we have discussed the category of left $\V L$-modules, and its interpretation
on the level of the Lie-Rinehart algebra as flat connections \eqref{LeftConn2}. Let us now consider 
{\em right} $\V L$-modules. A {\em
  right} $(A,L)$-connection (cf.\ \cite{Hue:LRAGAABVA}) on an $A$-module $N$ is a map
$\nabla^r: N \to \Hom_k(L,N)$ which fulfills
\begin{eqnarray}
\label{RightConn1}
\nabla^r_X(an) &=& a \nabla^r_X n - X(a)n \\ 
\label{RightConn2} 
\nabla^r_{aX}n &=& a \nabla^r_X n - X(a)n, \qqquad a \in A, \ X \in L,
\ n \in N.  
\end{eqnarray}
Again, the connection is called {\em flat} if one has
$[\nabla^r_X,\nabla^r_Y]=\nabla^r_{[Y,X]}$ for all $X,Y\in L$, 
in which case they integrate to a right $\V L$-module.
If $L$ is
$A$-projective of finite constant rank, then 
by \cite[Thm.\ 3]{Hue:LRAGAABVA} flat right $(A,L)$-connections on $A$ correspond to flat \textit{left} $(A,L)$-connections on
 $\textstyle\bigwedge^{\scriptscriptstyle{\rm max}}_A L$, the 
maximal exterior power of $L$. As such they were introduced in
\cite{Xu:GAABVAIPG} in the context of Lie algebroids to define Lie algebroid homology.
In the more general context of Lie-Rinehart algebras such flat right
$(A,L)$-connections on $A$
need not exist at all, cf.\ Remark \ref{hopf-times}.

For $M$ a right $(A,L)$-module---or, equivalently, a right $\V L$-module---we define Lie-Rinehart homology with 
coefficients in $M$ as
\begin{equation}
\label{VLTor}
H_\bull(L,M) := \Tor^{\V L}_\bull(M,A).
\end{equation}
We will now describe left and right bialgebroid structures on $\V L$: to
start with, set
\begin{equation*}
\label{SEqT}
s_r \equiv t_r \equiv s_l \equiv t_l \equiv i_A: A \hookrightarrow \V L.
\end{equation*}
With this identification at hand, the various $A$-module structures on $\V L$ reduce to 
left and right multiplication in $\V L$. With this, we write $\otimes^{ll}$ for the tensor product 
in $\mathsf{Mod}(\V L)$ and $\otimes^{rr}$ for the one in
$\mathsf{Mod}(\V L^\op)$.
\begin{proposition}
\label{FLR}
Flat left and right $(A,L)$-connections on $A$
 correspond to respectively left and
right bialgebroid
structures on $\V L$ over $A$.
\end{proposition}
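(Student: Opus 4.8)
The plan is to produce, separately in the left and the right case, an explicit bijection between flat $(A,L)$-connections on $A$ and coalgebra structures making $\V L$ a bialgebroid over $A$, where throughout $s_l=t_l=s_r=t_r=i_A$ is already fixed. I would organise the left case around the counit. For a left bialgebroid, $\eps_l$ endows the base $A$ with the structure of the monoidal unit object of $\mathsf{Mod}(\V L)$ by $h\cdot a=\eps_l(h\,i_A(a))$, and under the equivalence $\mathsf{Mod}(\V L)\cong\mathsf{Mod}(A,L)$ recalled above this $\V L$-module structure on $A$ is exactly a flat left $(A,L)$-connection on $A$; conversely such a connection recovers $\eps_l$ via $\eps_l(h)=h\cdot 1_A$. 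So the substance of the proof is to show that the connection also pins down a compatible coproduct, and that flatness is precisely what makes that coproduct well defined.

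\emph{Forward direction.} Given a flat left connection $\nabla^l$ on $A$, I would set $c(X):=\nabla^l_X(1_A)$, so that $\nabla^l_X(a)=a\,c(X)+X(a)$; here $A$-linearity of $\nabla^l$ in the $L$-slot gives $c(aX)=a\,c(X)$, and flatness $[\nabla^l_X,\nabla^l_Y]=\nabla^l_{[X,Y]}$ is equivalent to the cocycle identity $X(c(Y))-Y(c(X))=c([X,Y])$. I then define $\eps_l(h):=h\cdot 1_A$ (so $\eps_l\,i_A=\id_A$, $\eps_l\,i_L=c$), and define $\gD_l$ as the unique unital algebra homomorphism $\V L\to\V L\times_A\V L$ determined on generators by
\[
\gD_l(i_A(a))=i_A(a)\otimes 1,\qquad \gD_l(i_L(X))=i_L(X)\otimes 1+1\otimes i_L(X)-1\otimes i_A(c(X)).
\]
For this map to exist by the universal property of $\V L$ one must check that these values lie in the Takeuchi product and respect the defining relations; this is the heart of the argument and the main obstacle. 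The relation $i_A(a)i_L(X)=i_L(aX)$ is respected precisely because $c$ is $A$-linear, and $i_L(X)i_A(a)-i_A(a)i_L(X)=i_A(X(a))$ holds formally. Crucially, applying $\gD_l$ to the bracket relation $i_L([X,Y])=[i_L(X),i_L(Y)]$ yields on the second tensor leg the commutator of $i_L(X)-i_A(c(X))$ with $i_L(Y)-i_A(c(Y))$, which equals $i_L([X,Y])-i_A\!\big(X(c(Y))-Y(c(X))\big)$; hence the relation is respected \emph{if and only if} $c$ is a cocycle, i.e.\ iff $\nabla^l$ is flat.

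\emph{Remaining checks and converse.} I would then verify directly on generators that $\gD_l$ is coassociative and compatible with $\eps_l$: indeed the counit conditions of the $A_l$-coalgebra structure (Definition \ref{left-bialg}) \emph{force} the twisting term $-1\otimes i_A(c(X))$, so that $\gD_l$ is uniquely determined by $\eps_l$, giving a genuine bijection. For the converse I send a left bialgebroid to the flat connection carried by its monoidal unit $A\in\mathsf{Mod}(\V L)$; the two assignments are mutually inverse. In particular the canonical connection $\nabla^l_X=X$ (the anchor, $c=0$) returns the standard, choice-free left bialgebroid with $\gD_l(i_L(X))=i_L(X)\otimes 1+1\otimes i_L(X)$.

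\emph{Right case.} This is the mirror-image construction, using right $\V L$-modules $\cong$ right $(A,L)$-modules and the bimodule structure \rmref{bimod-rmod}; the counit $\eps_r$ again corresponds to the monoidal unit, hence to a flat right connection on $A$, and the analogous well-definedness computation once more singles out flatness of $\nabla^r$. The essential difference I would emphasise is structural rather than computational: writing $d(X):=\nabla^r_X(1_A)$, the axiom \rmref{RightConn2} forces $d(aX)=a\,d(X)-X(a)$, so $d$ is \emph{not} $A$-linear. This defect is exactly a flat right connection on $A$ (equivalently, by \cite[Thm.\ 3]{Hue:LRAGAABVA}, a flat left connection on $\textstyle\bigwedge^{\mathrm{max}}_A L$), which in general need not exist. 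Consequently, in contrast to the left case there is no distinguished $d=0$ choice, and the right bialgebroid structure—hence the full Hopf algebroid—may be obstructed, cf.\ Remark \ref{hopf-times}. All other verifications are the mirror images of those above.
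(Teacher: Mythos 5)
Your forward construction is essentially the paper's own proof, with the key verification spelled out rather than left to the reader. Like the paper, you define $\eps_l(D)=D\cdot 1_A$ from the $\V L$-action on $A$ that integrates the flat connection, you give the same coproduct on generators (your correction term $-1\otimes i_A(c(X))$ equals the paper's $-\eps_l(X)\otimes^{ll}1$, since $a\,h\otimes h'=h\otimes a\,h'$ in the tensor product $\otimes^{ll}$), and you extend via the universal property of $\V L$ into the Takeuchi product. Where the paper writes ``one easily checks'', you correctly isolate the crucial point: the assignment respects the bracket relation in $\V L$ if and only if $X(c(Y))-Y(c(X))=c([X,Y])$, which is precisely flatness of $\nabla^l$; this also supplies an ``only if'' refinement for coproducts of this prescribed shape. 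Your remarks on the right-handed case agree with Remark \ref{baumeister} and Remark \ref{hopf-times}.

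The gap is your claim that the counit conditions \emph{force} the twisting term, so that $\gD_l$ is uniquely determined by $\eps_l$ and your two assignments are mutually inverse. This is false: the counit axioms only constrain the images of the correction term under $\eps_l\otimes\id$ and $\id\otimes\eps_l$, and there are nonzero corrections killed by both. Concretely, take $A=k$ and $L=k$ abelian with zero anchor, so that $\V L=k[x]$ and a left bialgebroid over $k$ is just a $k$-bialgebra. For $\lambda\neq 0$, the algebra map determined by
\[
\gD_\lambda(x)=x\otimes 1+1\otimes x+\lambda\, x\otimes x,
\qquad \eps(x)=0,
\]
is coassociative and counital (the element $1+\lambda x$ is grouplike), hence a bialgebroid structure on $\V L$ with the \emph{same} counit as the canonical one but a different coproduct. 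It does not arise from any flat connection via your construction, so the composite (bialgebroid $\to$ connection $\to$ bialgebroid) is not the identity. Note that the paper's proof only constructs the map from flat connections to bialgebroid structures---``correspond'' is used in that weaker sense---so your proposal becomes correct, and matches the paper, once the bijectivity claim is deleted (or once one restricts to those bialgebroid structures for which $\gD_l(X)-X\otimes 1-1\otimes X$ lies in $i_A(A)\otimes 1$, where the counit does force the correction term).
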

\begin{proof}
Flat left and right $(A,L)$-connections $\nabla^l$ and $\nabla^r$ on $A$
give rise to resp.\ left and right $\V L$-actions on $A$
which will be denoted, only in this proof, by $(D, a) \mapsto D \cdot a$ and $(a,D) \mapsto a
\cdot D$ for $a \in A$ and $D \in \V L$. 
Define left and right counit by
\begin{equation*}
\eps_l(D) := D \cdot 1_A, 
\qqquad 
\qqquad 
\eps_r (D) := 1_A \cdot D, 
\end{equation*}
for $D\in\V L$.
In particular, we have of course $\eps_l(a) = a = \eps_r(a)$ for $a \in
A$. Seen as maps $\V L \to A$, one has by the properties of a left connection
\begin{equation*}
\eps_l(D \eps_l(E)) = (D\eps_l(E)) \cdot 1_A
= D \cdot \eps_l (E)
= D \cdot (E \cdot 1_A) 
= (DE) \cdot 1_A = \eps_l(DE) 
\end{equation*}
with $D,E\in\V L$, and also by \rmref{RightConn1} and \rmref{RightConn2}
\begin{equation*}
\eps_r(\eps_r (D)E) = 1_A \cdot (\eps_r(D)E)
= \eps_r (D) \cdot E
= (1_A \cdot D) \cdot E
= 1_A \cdot (DE)= \eps_r (DE).
\end{equation*}
Define left and right coproduct by setting
on generators $X \in L$, $a \in A$
\begin{equation*}
\begin{array}{rclcrcl}
\label{LCoProdVL1}
\gD_l X &=& 1 \otimes^{ll} X + X \otimes^{ll} 1 -
\eps_l (X) \otimes^{ll} 1, & & \quad \gD_l a &=& a \otimes^{ll}
1, \\
\gD_r X &=& 1 \otimes^{rr} X + X \otimes^{rr} 1 -
\eps_r (X) \otimes^{rr} 1, & & \quad \gD_r a &=& a \otimes^{rr} 1. \\
\end{array}
\end{equation*}
Extending these maps to the whole of $\V L$ by requiring them to corestrict to $k$-algebra
morphisms $\gD_l: \V L \to \V L \times_A \V L$ and $\gD_r: \V L \to \V L
\times^A \V L$ into the respective Takeuchi products (cf.\ page
\pageref{taki}) associated to the
$(A,A)$-bimodule structures \rmref{bimod-lmod} and \rmref{bimod-rmod}, respectively,
one easily checks that $(\V L, A, i_A, \gD_l,
\eps_l)$ is a left and $(\V L, A, i_A, \gD_r,
\eps_r)$ is a right bialgebroid, respectively.
\end{proof}
\begin{remark}
\label{baumeister}
The anchor of a Lie-Rinehart algebra yields a canonical flat left
$(A,L)$-connection and therefore defines a left bialgebroid structure. The associated left counit $\eps_l$
is simply the projection $\V L \to A$, and one has
$\eps_l (X) = 0$ and $\gD_l X = X \otimes^{ll} 1 + 1 \otimes^{ll} X$
for $X \in L$.
This is the left bialgebroid structure on $\V L$ of \cite{Xu:QG, KhaRan:CCOEHA, MoeMrc:OTUEAOALRA},
which we from now on will fix as {\em the} (canonical) left bialgebroid structure on $\V L$. Remark however that
for the {\em right} bialgebroid structure there is {\em no} canonical
choice, and in general $\eps_r(X) \neq 0$ for $X \in L$. 
\end{remark}
Next, we will define an antipode: let $(A,L)$ be a Lie-Rinehart algebra and $\nabla^r$ a right 
$(A,L)$-connection on $A$, and define the operator $\eps_r^L = \eps_r: L \to A, \ X \mapsto \nabla^r_X 1_A$. 
Define a pair of maps $S_L: L \to \V L$ and
$S_A: A \to \V L$ by
\begin{equation}
\label{TwAntip}
S_L(X) = -X + \eps_r(X), \quad S_A(a) = a, \qquad a \in A, \ X\in L. 
\end{equation}
Combining \rmref{RightConn1} with \rmref{RightConn2}, this implies that
$
S_L(aX) = -aX + \nabla^r_X a.
$
\begin{proposition} [Antipodes for Lie-Rinehart algebras]
\label{AntipProp}
The pair $(S_A, S_L)$ 
extends 
to 
a $k$-algebra antihomomorphism $S: \V L \to \V L$ 
if and only if the underlying right $(A,L)$-connection
on $A$ is flat. 
In such a case, $S$ is an involutive antipode with respect to the
canonical left
bialgebroid structure 
and the right bialgebroid
structure from \mbox{Proposition}~{\rm \ref{FLR}}.

Conversely, given a $k$-module isomorphism $S: \V L \to \V L$ satisfying 
$S(a_1Da_2) = a_2 S(D) a_1$ for all $D \in \V L$, $a_1, a_2 \in A$, and $S(1) = 1$, the assignment
\begin{equation*}
\label{RightConnifS}
\nabla^r: A \ \to \ \Hom_k(L,A), \quad a \ \mapsto \ \{ X \ \mapsto \
\eps_l(S(X) a)\}
\end{equation*}
defines a right $(A,L)$-connection on $A$ 
which is flat if and only if $S$ is a
$k$-algebra antihomomorphism. 
\end{proposition}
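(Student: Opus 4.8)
The plan is to read an algebra \emph{anti}homomorphism $S\colon\V L\to\V L$ as an ordinary $k$-algebra homomorphism $\V L\to\V L^\op$ and to construct it from the universal property of $\V L$ recalled above, applied with target $\mathscr W=\V L^\op$. For the data I would take $\phi_A:=S_A=i_A\colon A\to\V L^\op$, which is a $k$-algebra map because $A$ is commutative and $i_A(A)$ is a commutative subalgebra of $\V L$, and $\phi_L:=S_L\colon L\to{\rm Lie}(\V L^\op)$, $X\mapsto -X+\eps_r(X)$. The two structural identities of the universal property, read in $\V L^\op$ and then translated back to $\V L$ by means of the anchor relation $Xa-aX=X(a)$, become $S_L(X)\,a=S_L(aX)$ and $a\,S_L(X)-S_L(X)\,a=X(a)$; using $\eps_r(X)=\nabla^r_X 1_A$ together with \rmref{RightConn1} and \rmref{RightConn2} (which give $\nabla^r_X a=a\,\eps_r(X)-X(a)$), both reduce to identities in $A$ and hold for any right connection. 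Thus the only obstruction to invoking the universal property is that $\phi_L$ be a morphism of $k$-Lie algebras into ${\rm Lie}(\V L^\op)$, whose bracket is the \emph{negative} of that of $\V L$.

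The heart of the argument, and the step I expect to be the main obstacle, is showing that this Lie-morphism condition $S_L([X,Y])=-[S_L(X),S_L(Y)]$ is \emph{equivalent} to flatness of $\nabla^r$. Expanding the right-hand bracket of $-X+\eps_r(X)$ and $-Y+\eps_r(Y)$ in $\V L$, using $[X,b]=X(b)$ and $[b,Y]=-Y(b)$ for $b\in A$ and $[\eps_r(X),\eps_r(Y)]=0$, the condition collapses to the single scalar identity $\eps_r([X,Y])=X(\eps_r(Y))-Y(\eps_r(X))$. On the other hand, computing $[\nabla^r_X,\nabla^r_Y]$ on $1_A$ via $\nabla^r_X(\eps_r(Y))=\eps_r(Y)\eps_r(X)-X(\eps_r(Y))$ yields exactly $\bigl([\nabla^r_X,\nabla^r_Y]-\nabla^r_{[Y,X]}\bigr)1_A=\eps_r([X,Y])-X(\eps_r(Y))+Y(\eps_r(X))$, and multiplying through by an arbitrary $a\in A$ shows this vanishes on $1_A$ iff $\nabla^r$ is flat on all of $A$. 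Hence $S$ exists as an antihomomorphism precisely when $\nabla^r$ is flat.

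When $\nabla^r$ is flat and $S$ has been produced, I would check that it is an \emph{involutive} antipode for the left bialgebroid (the canonical one, with $\eps_l(X)=0$) and the right bialgebroid of Proposition~\ref{FLR}. Involutivity follows since $S^2$ is a homomorphism fixing the generators: $S^2(a)=a$ and $S^2(X)=X-\eps_r(X)+\eps_r(X)=X$. The axioms of Definition~\ref{HAlgd} are then routine: \rmref{Subr} is immediate because all four maps equal $i_A$ and $\eps_l,\eps_r$ restrict to the identity on $A$; condition (iii) reads $S(a_1ha_2)=a_2S(h)a_1$, which is the antihomomorphism property together with $S|_A=\id$; twisted coassociativity \rmref{TwCoAssoc} is verified on the generators $a\in A$ and $X\in L$ and extended by multiplicativity of the coproducts; and the antipode axioms \rmref{TwAp} are checked on generators, where $\mu(S\otimes\id)\gD_l X=X+(-X+\eps_r(X))=\eps_r(X)=s_r\eps_r(X)$ and $\mu(\id\otimes S)\gD_r X=S(X)+X-\eps_r(X)=0=s_l\eps_l(X)$.

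For the converse I would first verify that $\nabla^r_X a:=\eps_l(S(X)a)$ is a right $(A,L)$-connection. From $S(1)=1$ and $S(a_1Da_2)=a_2S(D)a_1$ one gets $S|_A=\id$, $S(aX)=S(X)a$, $S(Xa)=aS(X)$, and hence (comparing $S(Xa)$ with $S(aX+X(a))$) the commutation $a\,S(X)-S(X)\,a=X(a)$. Combining this with the left $A$-linearity $\eps_l(aD)=a\,\eps_l(D)$ of the anchor counit gives $S(X)\,ab=a\,S(X)\,b-X(a)b$ after applying $\eps_l$, which is exactly \rmref{RightConn1}, and \rmref{RightConn2} follows from $S(aX)=S(X)a$; none of this uses multiplicativity of $S$. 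Finally, the biconditional ``flat $\iff$ antihomomorphism'' rests on the same bridge as above: with $\eps_r:=\eps_l\circ S|_L$, the universal property identifies $k$-algebra maps $\V L\to\V L^\op$ extending $S|_A=\id$ and $S|_L$ with Lie-algebra morphisms $S|_L\colon L\to{\rm Lie}(\V L^\op)$, whose defining condition is precisely the flatness identity $\eps_r([X,Y])=X(\eps_r(Y))-Y(\eps_r(X))$ computed in the second paragraph; bijectivity of $S$ then guarantees that the resulting antipode is invertible. The delicate point to handle with care is that $\nabla^r$ sees $S$ only through $S|_L$, so the equivalence must be phrased as identifying the given $S$ with the canonical antihomomorphism determined by $\eps_r$ on generators, which is where I would lean on the universal property rather than attempt a direct computation.
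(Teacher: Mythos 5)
Your proposal is correct and follows essentially the same route as the paper: both run the universal property of $\V L$ with target $\V L^{\op}$, verify the two structural identities for $(S_A,S_L)$ (which hold for any right connection), and identify the Lie-morphism condition for $S_L$ with the vanishing of the curvature $\bigl([\nabla^r_X,\nabla^r_Y]-\nabla^r_{[Y,X]}\bigr)(1_A)$, then treat involutivity and the Hopf algebroid axioms by checks on generators (your $S^2$-fixes-generators argument is in fact cleaner than the paper's appeal to a PBW basis). The delicate point you flag at the end --- that in the converse direction flatness only sees $S$ through $S|_L$, so ``flat $\Rightarrow$ antihomomorphism'' requires identifying the given $S$ with the canonical antipode built from $\eps_r$ --- is glossed over in exactly the same way by the paper, which likewise just invokes the universal property there.
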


\begin{proof}
We use the universal property of $\V L$: clearly 
$S_A: A \to \V L$ is a morphism of $k$-algebras.
Next, compute
 \begin{equation*}
\begin{split}
[S_L X, &S_L Y] = [X,Y] + [Y,\eps_r(X)] - [X, \eps_r(Y)] + \eps_r(X) \eps_r(Y) -
\eps_r(Y) \eps_r(X) \\
&= [X,Y] + Y\eps_r(X) - X\eps_r(Y) \\
&= S_L([Y,X]) - \eps_r([Y,X]) + Y\eps_r(X) - X\eps_r(Y) + \eps_r(X) \eps_r(Y) -
\eps_r(Y) \eps_r(X) \\
&= S_L([Y,X])  - \nabla^r_{[Y,X]} 1_A +   \nabla^r_{X} \eps_r(Y)  -  \nabla^r_Y \eps_r(X) \\
&= S_L([Y,X])  - \nabla^r_{[Y,X]} 1_A +   \nabla^r_{X} \nabla^r_Y 1_A  -
\nabla^r_Y \nabla^r_X 1_A  \\
&= S_L([Y,X]) +\big([\nabla^r_{X}, \nabla^r_Y] - \nabla^r_{[Y,X]}\big)  (1_A). 
 \end{split}
 \end{equation*}
The term between brackets is the curvature of $\nabla^r$, so $S_L:L\to{\rm Lie}(\V L^{\op})$ is a morphism 
of $k$-Lie algebras if and only if $\nabla^r$ is flat. We now check
$$
S_A (a) S_L (X) = - Xa + a \eps_r(X) = -aX + \nabla^r_Xa
= S_L (aX)
$$
and also
$$
S_L (X) S_A (a) - S_A (a) S_L (X) = - aX + a \eps_r(X) + Xa - a \eps_r(X) = S_A (X(a)),
$$
so by the universal property of $\V L$, there exists a unique homomorphism $S:\V L\to \V L^{\op}$ which 
fulfills $S\circ i_A = S_A$ and $S\circ i_L =S_L$. 
If the connection is flat, the antipode axioms including $S^2 = \id$ are straightforward to 
check by considering a PBW basis of $\V L$ and making use of the antihomomorphism property.

For the converse statement, we need to check the properties \rmref{RightConn1} and
\rmref{RightConn2} in order to be a right connection. As for
\rmref{RightConn1}, we compute
\begin{equation*}
\begin{split}
\nabla^r_{X}(ab) &= \eps_l(S(X)ab) = \eps_l((-Xa + a\eps_r(X))b) = \eps_l((-aX - X(a) + a\eps_r(X))b) \\
               &= a\eps_l(S(X)b) - X(a)b = a\nabla^r_X b - X(a)b,
 \end{split}
 \end{equation*}
and \rmref{RightConn2} is left to the
 reader. 
To show flatness if and only if $S$ is a $k$-algebra antihomomorphism,
use again the universal property of $\V L$ to compare
\begin{equation*}
\begin{split}
[\nabla_Y,\nabla_X](a) &= \eps_l(S(Y) \eps_l(S(X)a)) -
\eps_l(S(X) \eps_l(S(Y)a)) \\
&= \eps_l(S(Y)S(X)a)) -
\eps_l(S(X)S(Y)a). 
\end{split}
\end{equation*}
with 
$
\nabla_{[X,Y]}a = \eps_l(S([X,Y])a).
$
This completes the proof.
\end{proof}
\begin{remark}
\label{hopf-times}
{\em The {\rm left} Hopf algebroid structure.} 
Let $\V L \otimes^{rl} \V L$ denote the
tensor product defined with respect to the ideal generated by $\{Da \otimes E - D
\otimes aE, \ D, E \in \V L, \ a \in A\}$. Although the antipode $S$ depends on
the right connection $\nabla^r$, 
the {\em translation map} $\V L \to \V L\otimes^{rl}\V L$, $D \mapsto D_+\otimes D_-:=D^{(1)}\otimes
S(D^{(2)})$
is 
independent of $\nabla^r$. Indeed, evaluated on a PBW basis $D=aX_{i_1}\cdots X_{i_n}$, one finds
\[
D_+\otimes D_-=\sum_{j=0}^n\sum_{i_1<\ldots< i_j,\atop i_{j+1}< \ldots <i_n}(-1)^{n-j} 
aX_{i_1}\cdots X_{i_j}\otimes X_{i_{j+1}}\cdots X_{i_n}.
\]
In \cite{KowKra:DAPIACT} it was proved that this defines a {\em left Hopf algebroid}
($\times_A$-Hopf algebra \cite{Schau:DADOQGHA}) structure on $\V L$, i.e., the Hopf-Galois map
$\beta\!:=\!\varphi_2\!:\! VL \otimes^{rl} VL \to VL \otimes^{ll} VL$, 
$D \otimes E \mapsto D_{(1)} \otimes D_{(2)} E$ (see \S\ref{adarte}) 
is bijective with inverse 
$D \otimes E \mapsto D_+ \otimes D_- E$. Among others, this implies that the map above satisfies several
identities of which we will only list the 
three needed later on when dealing with jet spaces:
\begin{eqnarray}
\label{Sch1}
\quad D_{+(1)} \otimes D_{+(2)}D_- &\!\!\!\!=&\!\!\!\! D \otimes 1 
\qqquad\qqquad\quad\quad \ \in \V L \otimes^{ll} \V L,\\
\label{Sch2}
\quad D_{+(1)} \otimes D_{+(2)} \otimes D_- &\!\!\!\!=&\!\!\!\! D_{(1)}\otimes D_{(2)+}
\otimes  D_{(2)-} \quad\ \, \in \V L \otimes^{ll} \V L \otimes^{rl} \V L, \\
\label{Sch47}
\quad D_+D_-&\!\!\!\!=&\!\!\!\! \eps_l(D).
\end{eqnarray}
Hence, if $A$ does not admit a flat right $(A,L)$-connection (see
\cite{KowKra:DAPIACT} for a counterexample), 
$\V L$ is merely a 
left Hopf algebroid,
but not a Hopf algebroid. Since every Hopf algebroid (with bijective antipode) can be described by {\em two} different kinds 
of bijective Hopf-Galois maps (see \cite[Prop. 4.2]{BoeSzl:HAWBAAIAD} for details), 
we thence propose the name {\em left Hopf algebroid} 
rather than $\times_A$-Hopf algebra (see also \cite[\S2.6.14]{Kow:HAATCT} why this is a reasonable terminology, 
apart from solving a pronunciation problem).
\end{remark}

\subsubsection{The cyclic theory of $\V L$}
In this section we present the computations of the Hopf-cyclic
cohomology and dual Hopf-cyclic homology of the 
universal enveloping algebra $\V L$ of a Lie-Rinehart algebra
$(A,L)$.  
Let for the rest of this section $L$ be projective as a left
$A$-module. 
Furthermore, let $\nabla=\nabla^r$ be a flat right $(A,L)$-connection on
$A$ with associated right counit $\eps_r$, and denote $A_\nabla$ for $A$ equipped with this
right $(A,L)$-module structure. By Proposition \ref{AntipProp}, the connection $\nabla$ determines an antipode,
and therefore the cyclic cohomology and homology are defined. In the following we shall write $B_\nabla$ for the 
corresponding cyclic cohomology operator to stress this dependence; remark that the Hochschild operator $b$ is independent of $\nabla$. 
Consider the 
exterior algebra $\textstyle\bigwedge^\bull_AL$ over $A$ equipped with the differential
$\partial:\textstyle\bigwedge^n_AL\to\textstyle\bigwedge^{n-1}_AL$ defined by
\[
\begin{split}
\partial(aX_1\wedge\cdots\wedge X_n):=&\sum_{i=1}^n(-1)^{i+1} 
\eps_r(aX_i)X_1\wedge\cdots\wedge\hat{X}_i\wedge\cdots\wedge X_n\\
&+\sum_{i<j}(-1)^{i+j}a[X_i,X_j]\wedge X_1\wedge\cdots\wedge\hat{X}_i
\wedge\cdots\wedge\hat{X}_j\wedge\cdots\wedge X_n
\end{split}
\]

\begin{theorem}
\label{LRCyclCohom}
Let $(A,L)$ be a Lie-Rinehart algebra with $L$ projective
over $A$ and equipped with a flat right $(A,L)$-connection $\nabla$ on $A$. 
The antisymmetrisation map
\[
X_1 \wedge \cdots \wedge X_n \mapsto
\frac{1}{n!}\sum_{\gs \in S_n} (-1)^\gs X_{\gs(1)} \otimes^{ll}
\cdots \otimes^{ll}  X_{\gs(n)},
\]
defines a quasi-isomorphism of
mixed complexes
\[
{\rm Alt}:\left( \textstyle\bigwedge^\bull_AL,0,\partial\right)\to
\left(C^\bull(\V L),b,B_\nabla\right)
\]
which induces natural isomorphisms 
\[
\begin{split}
HH^\bull(\V L)&\cong \textstyle\bigwedge^\bull_AL,\\
HP^\bull(\V L) &\cong \underset{n \equiv \bull \, {\rm mod} \,
  2}{\textstyle{\bigoplus}} \, H_{n}(L,A_{\nabla}). 
\end{split}
\]
\end{theorem}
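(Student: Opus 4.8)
The plan is to establish the result by treating the two differentials of the mixed complexes separately and then invoking the standard comparison machinery for mixed complexes. \textbf{Step A} identifies the Hochschild part: I would show that ${\rm Alt}$ is a quasi-isomorphism of the underlying $b$-complexes, yielding $HH^\bull(\V L)\cong\bigwedge^\bull_A L$. \textbf{Step B} shows that, under ${\rm Alt}$, Connes' operator $B_\nabla$ on $C^\bull(\V L)$ corresponds to the Lie-Rinehart boundary $\partial$. Once ${\rm Alt}$ is a morphism of mixed complexes which is a $b$-quasi-isomorphism, the usual $SBI$/spectral-sequence argument forces isomorphisms on all associated cyclic theories; since the source mixed complex $(\bigwedge^\bull_A L,0,\partial)$ has vanishing $b$-differential, its periodic groups collapse to the $\partial$-homology $\bigoplus_{n\equiv\bull\bmod 2}H_n(\bigwedge^\bull_A L,\partial)$.

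For Step A, I would first check that ${\rm Alt}$ lands in $b$-cocycles, i.e. $b\circ{\rm Alt}=0$. This follows from the primitivity $\gD_l X=X\otimes^{ll}1+1\otimes^{ll}X$ of elements $X\in L$ (Remark \ref{baumeister}) together with antisymmetrisation: the internal cofaces produce two families of terms that cancel pairwise, while the outer cofaces $\gd_0,\gd_{n+1}$ (insertion of $1$) are killed after antisymmetrising. To see that ${\rm Alt}$ is actually a quasi-isomorphism, I would invoke the Poincar\'e--Birkhoff--Witt theorem: since $L$ is $A$-projective, symmetrisation $\pi\colon S_A L\to\V L$ is an isomorphism of $A$-coalgebras for the left coproduct (both coproducts being cocommutative and determined by $L$ being primitive). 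Because the Hochschild differential $b$ depends only on $(\gD_l,\eps_l)$, this identifies $(C^\bull(\V L),b)$ with the coalgebra Hochschild complex of the symmetric coalgebra $S_A L$, whose cohomology is computed by Koszul duality to be $\bigwedge^\bull_A L$, with comparison map exactly ${\rm Alt}$; equivalently one runs a PBW-filtration argument whose associated graded is $S_A L={\rm gr}\,\V L$. Alternatively, using $HH^\bull(\V L)\cong\Cotor^\bull_{\V L}(A,A)$ from Theorem \ref{hoch-tor}, one exhibits the Koszul complex as a relative injective resolution, reducing via projectivity of $L$ to the locally free, hence classical commutative, situation.

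For Step B, the task is the explicit computation ${\rm Alt}\circ\partial=B_\nabla\circ{\rm Alt}$, up to a $b$-coboundary, which suffices for a morphism of mixed complexes. Here $B_\nabla$ is assembled from the cocyclic operator $\tau$ of \rmref{CyclicHopf}, which uses the antipode and so depends on $\nabla$ through $\eps_r$; on generators $SX=-X+\eps_r(X)$ by \rmref{TwAntip}. Feeding the antisymmetrised tensor into $B_\nabla$ and expanding, the $-X$ part of each $SX$ recombines, after antisymmetrisation and use of the coproduct, into the bracket terms $a[X_i,X_j]$ of $\partial$, while the scalar part $\eps_r(X)$ produces precisely the connection terms $\eps_r(aX_i)$. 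The involutivity $S^2=\id$ is what makes $\tau$ genuinely cyclic and drives the telescoping cancellations, and careful sign bookkeeping then matches the two sums defining $\partial$.

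Finally I would assemble the pieces: ${\rm Alt}$ is a morphism of mixed complexes, a quasi-isomorphism on $b$-homology by Step A, hence a quasi-isomorphism of mixed complexes, and so induces isomorphisms on Hochschild, cyclic, and periodic cyclic cohomology. The Hochschild isomorphism is the first displayed formula; for the periodic statement, the vanishing of the $b$-differential on the source collapses its periodic cohomology to $\bigoplus_{n\equiv\bull\bmod 2}H_n(\bigwedge^\bull_A L,\partial)$, which by \rmref{VLTor} is $\bigoplus_{n\equiv\bull}H_n(L,A_\nabla)=\bigoplus_{n\equiv\bull}\Tor^{\V L}_n(A_\nabla,A)$. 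I expect the main obstacle to be Step B: $B_\nabla$ is a long alternating sum built from $\tau$, and showing that everything except the bracket and $\eps_r$-connection contributions cancels after antisymmetrisation---with all signs and the full $\nabla$-dependence landing exactly as in $\partial$---is the genuinely delicate calculation.
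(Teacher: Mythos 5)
Your outline reproduces the paper's strategy for the Hochschild part almost verbatim: the paper also first checks $b\circ{\rm Alt}=0$, then uses that $b$ depends only on the $A$-coalgebra structure together with the PBW isomorphism of $A$-coalgebras $S_AL\cong\V L$ (viewing $S_AL$ as the enveloping algebra of $L$ with zero anchor and bracket), exhibits a dual Koszul resolution by cofree, hence relative injective, left comodules, and localises at maximal ideals to reduce to the free case, concluding via Theorem \ref{hoch-tor}. One gap in your Step A: localisation handles only the finitely generated projective case; for general projective $L$ the paper writes $L$ as a filtered inductive limit of finitely generated projectives and uses that $\Cotor$ and the symmetric algebra functor commute with such limits. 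The genuine difference is in Step B. You propose to compute $B_\nabla\circ{\rm Alt}$ directly from the cocyclic operator \rmref{CyclicHopf}; since $B_\nabla=N\gs_{-1}(1-\gl)$ and $N=\sum_i\gl^i$, this forces you to control iterated applications of $\tau_n$, each involving an antipode and a diagonal action, which is exactly the bookkeeping you flag as delicate. The paper avoids this by routing through the coinvariant localisation of \S\ref{coinvariants}: writing ${\rm Alt}(aX_1\wedge\cdots\wedge X_n)=\overline{\Psi}_{\scriptscriptstyle{\rm coinv}}\big(\tfrac{1}{n!}\sum_\gs(-1)^\gs a\otimes X_{\gs(1)}\otimes\cdots\otimes X_{\gs(n)}\big)$ and using the intertwining $B_\nabla\circ\overline{\Psi}_{\scriptscriptstyle{\rm coinv}}=\overline{\Psi}_{\scriptscriptstyle{\rm coinv}}\circ B$, where $B$ is the cyclic differential of the coalgebra cocyclic module $B^\bull(\V L)$ and contains no antipode at all, only cyclic permutations and counits. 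Since $L\subset\ker\eps_l$, most terms of $B$ vanish on the antisymmetrised chain, and the antipode then enters exactly once, through \rmref{pollux}, where the identity for $\gD^{n-1}_lS(aX)$ produces precisely the $\eps_r$-connection terms and (after antisymmetrisation) the bracket terms of $\partial$; this is the computation \rmref{duoplast}. So your route should work but buys you a substantially harder calculation, while the paper's detour concentrates all $\nabla$-dependence in a single antipode application. Finally, be aware that proving the intertwining only ``up to a $b$-coboundary'' would suffice for the isomorphisms on $HH^\bull$ and $HP^\bull$, but not for the literal assertion of the theorem that ${\rm Alt}$ is a morphism of mixed complexes; the paper obtains strict commutation, and with the coinvariant method so can you.
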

\begin{proof}
The isomorphism for the Hochschild groups relies on a similar
consideration for $k$-modules \cite{Car:CDC, Kas:QG} and is also known in the Lie
 algebroid case \cite[Thm.~1.2]{Cal:FFLA}.
 The proof of the algebraic case proceeds analogously: first one checks that the morphism
${\rm Alt}: \textstyle\bigwedge_A^\bull L\to C^\bull(\V L)$ indeed
 commutes with the differentials, $b\circ {\rm Alt}=0$. 
Since the Hochschild cohomology only depends on the $A$-coalgebra structure,
it suffices to prove that the morphism
${\rm gr}({\rm Alt}):\textstyle\bigwedge_A^\bull L\to
 C^\bull\left({\rm gr}(\V L)\right) \cong 
C^\bull\left(S_AL\right)$ 
is a quasi-isomorphism: observe that $S_AL$ can be seen as the
universal enveloping algebra of the 
Lie-Rinehart algebra defined by the $A$-module $L$ equipped with zero bracket and zero anchor. With this, the PBW
map $S_AL \to \V L$ is an isomorphism of $A$-coalgebras.

Assume first $L$ to be
finitely generated projective over $A$. Localising with respect to a maximal ideal
 $\frakm\subset A$, the module $L_\frakm$ is free over $A_\frakm$ of rank (say) $r$, 
and the morphism descends to a cochain morphism
\[
{\rm gr}({\rm Alt})_\frakm:\textstyle\bigwedge_{A_\frakm}^\bull L_\frakm\to C^\bull(S_{A_\frakm} L_\frakm).
\]
We shall prove that this map is a quasi-isomorphism for all $\frakm$. 
 Fix a basis $e_i\in L_\frakm,~i=1,\ldots, r$ over $A_\frakm$
 as well as a dual basis $e^i\in L_\frakm^*$. We then have 
$S_{A_\frakm}L_\frakm\cong A_\frakm[e_1,\ldots,e_r]$. 
The dual Koszul resolution of $A_\frakm$ by left $S_{A_\frakm}L_\frakm$-comodules has the form
\[
K': \quad  A_\frakm\longrightarrow S_{A_\frakm}L_\frakm\stackrel{d}{\longrightarrow}
S_{A_\frakm}L_\frakm\otimes_{A_\frakm}L_\frakm
\stackrel{d}{\longrightarrow}S_{A_\frakm}L_\frakm\otimes_{A_\frakm}
\textstyle\bigwedge^2_{A_\frakm}L_\frakm\stackrel{d}{\longrightarrow}\ldots
\]
with $d=\sum_{i=1}^r\iota_{e^i}\otimes e_i$, that is,
\[
d(D\otimes X_1\wedge\cdots\wedge X_n):=\sum_{i=1}^r \iota_{e^i} D \otimes e_i\wedge X_1\wedge\cdots\wedge X_n.
\]
Here $\iota_\alpha$ denotes the action of $\alpha\in L^* :=
\Hom_A(L,A)$ by
derivations: 
$\iota_\alpha D:=\ga(D_{(1)})D_{(2)}$, $D \in S_AL$, with respect to the coproduct
$\gD_{\scriptscriptstyle{SL}}$ on $S_AL$.
Defining a contracting homotopy by 
$$
s(D\otimes X_1\wedge\cdots\wedge X_n):=\sum_{i=1}^r (-1)^{i+1} DX_i
\otimes X_1\wedge\cdots\wedge \hat{X}_i \wedge \cdots \wedge X_n,
$$
it can be checked that $K'$ yields a cofree resolution in the category
$\mathsf{Comod}_\elle(S_{A_\frakm}L_\frakm)$, hence the resolution is also relative injective. 
To compare this with the cobar resolution, one shows that the natural map
\[
D_0\otimes D_1\otimes\cdots\otimes D_n\mapsto D_0\otimes \pr(D_1)\wedge\cdots\wedge \pr(D_n), 
\]
where $\pr:S_{A_\frakm}L_\frakm\to L_\frakm$ denotes the canonical projection, defines a cochain equivalence.
Indeed this amounts to the identity
\[
(\id\otimes \pr)\Delta_{\scriptscriptstyle{SL}}D =\sum_{i=1}^r\iota_{e^i}D \otimes e_i,
\]
for all $D\in S_{A_\frakm}L_\frakm$, an identity that is easily checked on generators. 
To compute the $\Cotor$ groups, we use the natural isomorphism
\[
A_\frakm\bx_{S_{A_\frakm}L_\frakm}\left(S_{A_\frakm}L_\frakm\otimes_{A_\frakm}
\textstyle\bigwedge^\bull_{A_\frakm}L_\frakm\right)\cong 
\textstyle\bigwedge^\bull_{A_\frakm}L_\frakm,
\]
which induces the zero differential on the right hand side. By the
fact that the 
projection $({S_{A_\frakm}L_\frakm})^{\otimes
  n}\to\textstyle\bigwedge^n_{A_\frakm}L_\frakm$ 
is a left inverse to ${\rm Alt}$, the claim now follows.

In the general case where $L$ is projective over $A$, but not finitely generated, 
there exists as in \cite[Thm.\ 3.2.2]{Lod:CH} a filtered
ordered set $J$ as well as an inductive system of finitely generated projective (or even free) $A$-modules $L_j$ such that 
$$
L \simeq \lim_{\stackrel{\longrightarrow}{j \in J}} L_j.
$$
Since both $HH$ (which is the
derived functor $\Cotor$ here) as well as $S$ commute with inductive
limits over a filtered ordered set, the projective case follows from the
finitely generated projective case.

To prove the second isomorphism, 
we need to show that ${\rm Alt}$ intertwines the cyclic cohomology differential with
$\partial$. The best way to do this is to use localisation onto
coinvariants. Let 
$B_{\nabla}: C^\bull(\V L) \to C^{\bull-1}(\V L)$ 
and $B: B^\bull(\V L) \to B^{\bull-1}(\V L)$ denote the cyclic
cohomology differentials of the mixed complexes associated to the
Hopf-cocyclic module $\V L_\natural$ and the fundamental $A$-coalgebra cocyclic
module $\V L_{\scriptscriptstyle{{\rm coalg}}, \natural}$,
respectively. 
As usual, $B = N \gs_{-1}(1 - \gl)$, where $\gl :=
(-1)^n \tau_n$, $N := \sum^n_{i=0} \gl^i$, and $\gs_{-1} := \gs_{n-1}
\tau_n$. Hence, $B:B^n(\V L)\to B^{n-1}(\V L)$ is given 
explicitly by
\[
\begin{split}
B(D_0\otimes\cdots\otimes D_n)=
&\sum_{i=0}^n\Big((-1)^{ni}\epsilon_l(D_0)D_{i+1}\otimes\cdots\otimes D_n\otimes D_1\otimes\cdots\otimes D_{i-1}\\
&-(-1)^{n(i-1)}\eps_l(D_n)D_{i+1}\otimes\cdots\otimes D_{n-1}\otimes D_0\otimes\cdots\otimes D_{i-1}\Big).
\end{split}
\]
Note that $B^n(\V L) \cong C^{n+1}(\V L)$ as $(A,A)$-bimodules in this example.
From our general considerations in \S\ref{coinvariants}, we have 
$B_{\nabla} \circ \overline{\Psi}_{\scriptscriptstyle{\rm coinv}} =
\overline{\Psi}_{\scriptscriptstyle{\rm coinv}} \circ B$ for the 
morphism $\overline{\Psi}_{\scriptscriptstyle{\rm coinv}}: B^n(\V L)\to
C^n(\V L)$. Using its right inverse \rmref{pollux}, it is seen that 
$$
{\rm Alt}( aX_1 \wedge \cdots \wedge X_n) =
\overline{\Psi}_{\scriptscriptstyle{\rm coinv}}
\Big(\frac{1}{n!}\sum_{\gs \in S_n} (-1)^\gs a
\otimes X_{\gs(1)} \otimes
\cdots \otimes  X_{\gs(n)}\Big).
$$ 
Since $L \subset \ker \eps_l$ and because $\eps_l$ is a
left $A$-module map, we can compute 
\begin{equation*}
\begin{split}
B_{\nabla} &\big({\rm Alt}(aX_1 \wedge \cdots \wedge X_n)\big) = \\
&= B_{\nabla} \overline{\Psi}_{\scriptscriptstyle{\rm
    coinv}}\Big(\frac{1}{n!}
\sum_{\gs \in S_n} (-1)^\gs a
\otimes X_{\gs(1)} \otimes \cdots \otimes X_{\gs(n)}\Big) \\
&= \overline{\Psi}_{\scriptscriptstyle{\rm coinv}} B\Big(\frac{1}{n!}\sum_{\gs \in S_n} (-1)^\gs a
\otimes X_{\gs(1)} \otimes \cdots \otimes X_{\gs(n)}\Big) \\
&= \overline{\Psi}_{\scriptscriptstyle{\rm coinv}}
\Big(\frac{1}{(n-1)!} 
\sum_{\gs \in S_n} (-1)^\gs a X_{\gs(1)} \otimes
\cdots \otimes  X_{\gs(n)} \Big) \\
&= {{\frac{1}{(n-1)!}}} \sum_{\gs \in S_n} (-1)^\gs
S(aX_{\gs(1)}) \cdot \big(X_{\gs(2)} \otimes
\cdots \otimes  X_{\gs(n)}\big). 
\end{split}
\end{equation*}
Now as an element 
in $\V L^{\otimes^{ll} n}$, it is easy to see that
\[
\gD^{n-1}_l S(aX) = - \sum^n_{i=1}  \underbrace{1\otimes\cdots\otimes 1}_{\scriptscriptstyle{i-1~{\rm times}}}
\otimes{aX}\otimes \underbrace{1\otimes\cdots\otimes 1}_{\scriptscriptstyle{n-i~{\rm times}}} 
+ \eps_r(aX)  \otimes \underbrace{1\otimes\cdots\otimes 1}_{\scriptscriptstyle{n-1~{\rm times}}}
\]
for $a\in A$, $X\in L$. With this one then obtains
\begin{equation}
\label{duoplast}
\begin{split}
&\frac{1}{(n-1)!} \sum_{\gs \in S_n} (-1)^\gs
S(aX_{\gs(1)}) \cdot \big(X_{\gs(2)} \otimes
\cdots \otimes  X_{\gs(n)}\big) = \\
&= {{\frac{1}{(n-1)!}}} \sum_{\gs \in S_n} (-1)^\gs 
\eps_r(aX_{\gs(1)})X_{\gs(2)} \otimes \cdots \otimes  X_{\gs(n)} \\
&\quad - {{\frac{1}{(n-1)!}}} 
\sum^n_{i=1}\sum_{\gs \in S_n} (-1)^\gs a X_{\gs(2)} \otimes \cdots \otimes X_{\gs(1)}X_{\gs(i)} \otimes 
\cdots \otimes  X_{\gs(n)} \\
&= {\rm Alt}\big( \sum^n_{i=1} (-1)^{i + 1} \eps_r(aX_i) X_1 \wedge
\cdots \wedge \hat{X}_i \wedge \cdots \wedge X_n  \\
&\quad + \sum_{i<j} (-1)^{i+j} a [X_i, X_j] \wedge X_1
\wedge \cdots\wedge \hat{X}_i \wedge \cdots \wedge \hat{X}_j \wedge \cdots\wedge X_n
\big) \\
&= {\rm Alt} \big(\partial(a \otimes X_1 \wedge \cdots \wedge X_n)\big).
\end{split}
\end{equation}
This completes the proof.
\end{proof}
\begin{theorem}
Let $(A,L)$ be a Lie-Rinehart algebra. 
Under the same assumptions as in Theorem \ref{LRCyclCohom}, there are natural isomorphisms
\[
\begin{split}
HH_\bull(\V L)&\cong H_{\bull}(L,A_{\nabla}),\\
HC_\bull(\V L)&\cong  \underset{i \geq 0}{\textstyle\bigoplus} \, H_{\bull-2i}(L,A_{\nabla}).
\end{split}
\]
\end{theorem}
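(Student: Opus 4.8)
The plan is to read off both isomorphisms from the general results of \S\ref{schondreiuhr} and \S\ref{baldvier}, exploiting that Lie-Rinehart homology \rmref{VLTor} is by construction a $\Tor$-group over $\V L$. First I would record the structural facts needed to apply those results. Here $A_l = A_r = A$ with $s_l = t_l = s_r = t_r = i_A$, so the left $A_r$-module structure \rmref{H-comod} on $\V L$ is plain left multiplication; the Poincar\'e--Birkhoff--Witt isomorphism $\V L \cong S_A L$ of left $A$-modules shows $\V L$ is $A$-projective, since $\mathbb{Q}\subset A$ makes each $S^p_A L$ a direct summand of the projective module $L^{\otimes_A p}$. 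On generators one has $\gD_l a = a\otimes 1 = 1\otimes a$ and $\gD_l X = 1\otimes X + X\otimes 1$, so the canonical left bialgebroid is cocommutative, and this extends to all of $\V L$ because $\gD_l$ is an algebra map. Finally, flatness of $\nabla$ guarantees $S^2 = \id$ by Proposition \ref{AntipProp}, so all the hypotheses of Theorems \ref{hoch-tor} and \ref{cocomm-cycl} are in force.

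Given this, the Hochschild isomorphism of Theorem \ref{hoch-tor} reads
\[
HH_\bull(\V L)\cong\Tor^{\V L}_\bull(A_r, A_l).
\]
The left factor $A_l$ is the monoidal unit of $\mathsf{Mod}(\V L)$, whose action $D\cdot a = \eps_l(Da)$ restricts on $X\in L$ to the anchor $X\cdot a = X(a)$; hence $A_l$ is exactly the left $(A,L)$-module $A$ entering \rmref{VLTor}. The right factor $A_r$ is the unit of $\mathsf{Mod}(\V L^{\op})$, with action $a\cdot D = \eps_r(i_A(a)D)$, which by the definition of $\eps_r$ in Proposition \ref{FLR} is precisely the right $(A,L)$-module $A_\nabla$ attached to $\nabla$. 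Substituting these identifications into the definition \rmref{VLTor} gives immediately
\[
HH_\bull(\V L)\cong\Tor^{\V L}_\bull(A_\nabla, A) = H_\bull(L, A_\nabla).
\]
Unlike the cohomological statement of Theorem \ref{LRCyclCohom}, this requires no Koszul-type resolution, precisely because Lie-Rinehart homology is already defined as the pertinent derived functor.

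For the cyclic part I would invoke Theorem \ref{cocomm-cycl} \emph{ii}): as $\V L$ is cocommutative and left $A_r$-projective, that theorem yields a natural isomorphism $HC_\bull(\V L)\cong\bigoplus_{i\geq 0}HH_{\bull-2i}(\V L)$, and inserting the Hochschild computation above gives
\[
HC_\bull(\V L)\cong\bigoplus_{i\geq 0}H_{\bull-2i}(L,A_\nabla).
\]
The only genuine work lies in the two module identifications---of $A_l$ with the anchor module and of $A_r$ with $A_\nabla$---together with checking projectivity and cocommutativity. I expect the identification $A_r\cong A_\nabla$, which requires reconciling the right-counit action with the right connection via Proposition \ref{FLR}, to be the most delicate piece of bookkeeping, although it remains entirely routine.
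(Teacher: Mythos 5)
Your proposal is correct and follows exactly the paper's own route: the first isomorphism is Theorem \ref{hoch-tor} combined with the definition \rmref{VLTor} of Lie-Rinehart homology as a $\Tor$ functor, and the second is Theorem \ref{cocomm-cycl} \emph{ii}). The extra verifications you supply (projectivity of $\V L$ via PBW, cocommutativity, $S^2=\id$, and the identifications $A_l\cong A$ with the anchor action and $A_r\cong A_\nabla$) are precisely the details the paper leaves implicit, and they are all accurate.
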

\begin{proof}
The first isomorphism follows from Theorem \ref{hoch-tor}, together
 with the definition \eqref{VLTor}
 of Lie-Rinehart homology as a $\Tor$ functor. The second isomorphism
 follows from Theorem \ref{cocomm-cycl} {\em ii}).
\end{proof}
\begin{proposition}
The isomorphism of Hochschild homology above is induced by the chain morphism
\[
\varphi^{-1}_n \circ n! \, {\rm Alt} :\left(\textstyle\bigwedge^n_AL,\partial\right)\to \left(C_n(\V L),b\right),
\]
where $\varphi$ is the Hopf-Galois map of Lemma {\rm \ref{hopf-galois}}.
\end{proposition}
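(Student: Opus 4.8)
The plan is to verify that $\Xi_n := \varphi_n^{-1}\circ n!\,{\rm Alt}$ is a morphism of complexes $(\bigwedge^\bullet_A L,\partial)\to(C_\bullet(\V L),b)$ and that it is a quasi-isomorphism; since the source computes $H_\bullet(L,A_\nabla)$ (by definition \rmref{VLTor}, via the Chevalley--Eilenberg--Rinehart resolution $\V L\otimes_A\bigwedge^\bullet_A L\to A$) and the target computes $HH_\bullet(\V L)$ (Theorem \ref{hoch-tor}), this will identify the isomorphism of the preceding theorem with $\Xi$.

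First I would transport the differential. By Theorem \ref{cyclic-duality} the Hopf--Galois map $\varphi$ realises $C_\bullet(\V L)$ as the cyclic dual of $C^\bullet(\V L)$, so that $\varphi_{n-1}\circ b = \tilde b\circ\varphi_n$ with $\tilde b := \sum_{i=1}^n(-1)^i\gs_{i-1}+\gs_{n-1}\tau_n$, the codegeneracies $\gs_i$ and the cocyclic operator $\tau_n$ being those of \rmref{CoDegHopf} and \rmref{CyclicHopf}. Consequently $b\circ\Xi_n=\varphi_{n-1}^{-1}\circ n!\,(\tilde b\circ{\rm Alt})$, so it suffices to show $\tilde b\circ{\rm Alt}=\tfrac{1}{n}\,{\rm Alt}\circ\partial$. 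The decisive simplification is that $L\subseteq\ker\eps_l$ for the canonical left bialgebroid structure (Remark \ref{baumeister}): every tensor factor of ${\rm Alt}(aX_1\wedge\cdots\wedge X_n)$ is an $A$-multiple of an element of $L$, hence lies in $\ker\eps_l$, while each $\gs_{i-1}$ with $1\le i\le n$ applies $\eps_l$ to slot $i$. Thus all codegeneracy terms annihilate the image of ${\rm Alt}$, leaving $\tilde b\circ{\rm Alt}=\gs_{n-1}\tau_n\circ{\rm Alt}$.

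It then remains to evaluate this single surviving term. Writing out $\tau_n$ from \rmref{CyclicHopf} and applying $\gs_{n-1}$ (that is, $\eps_l$ on the last slot, collapsing $\gD_l^{n-1}$ to $\gD_l^{n-2}$), one finds $\gs_{n-1}\tau_n\,{\rm Alt}(aX_1\wedge\cdots\wedge X_n)=\tfrac{1}{n!}\sum_{\gs\in S_n}(-1)^\gs S(aX_{\gs(1)})\cdot(X_{\gs(2)}\otimes\cdots\otimes X_{\gs(n)})$, with the $\V L$-action of \rmref{mod-tensor}. This is precisely $\tfrac{1}{n}$ times the expression already computed in \rmref{duoplast}, which was shown there to equal ${\rm Alt}(\partial(a\otimes X_1\wedge\cdots\wedge X_n))$; I would reuse that calculation verbatim rather than repeat it. The factor $n!$ in $\Xi_n$ absorbs the $\tfrac{1}{n}$ exactly, giving $b\circ\Xi_n=\Xi_{n-1}\circ\partial$.

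Finally, for the quasi-isomorphism I would exhibit $\Xi$ as the map obtained by applying $A_\nabla\otimes_{\V L}-$ to a $\V L$-linear comparison map between the Chevalley--Eilenberg--Rinehart resolution $\V L\otimes_A\bigwedge^\bullet_A L\to A$ and the bar resolution $\Br_\bullet(\V L)\to A$ in $\mathsf{Mod}(\V L)$; any chain map covering $\id_A$ induces the canonical isomorphism on $\Tor^{\V L}_\bullet(A_\nabla,A)$, so $\Xi$ is a quasi-isomorphism. Alternatively one runs the associated-graded argument of Theorem \ref{LRCyclCohom}, filtering by PBW degree to reduce to the symmetric algebra $S_AL$, where $\Xi$ degenerates to the classical antisymmetrisation quasi-isomorphism. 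The main obstacle is bookkeeping: matching the cyclic-duality recipe of Theorem \ref{cyclic-duality} against the explicit boundary \rmref{DualHH} so that only $\gs_{n-1}\tau_n$ survives, and—for the quasi-isomorphism—controlling the $\eps_r$-corrections introduced by $\varphi^{-1}$ when checking $\V L$-linearity of the resolution-level lift.
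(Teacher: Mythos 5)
Your proposal is correct and follows essentially the same route as the paper: cyclic duality (Theorem \ref{cyclic-duality}) reduces the chain-map property to showing ${\rm Alt}$ intertwines $\partial$ with $\gs_{n-1}\tau_n+\sum_i(-1)^{i+1}\gs_i$, the codegeneracy terms die on the image of ${\rm Alt}$ since $L\subset\ker\eps_l$, and the surviving term is identified via the calculation \rmref{duoplast}, with $n!$ absorbing the factor $\tfrac{1}{n}$. The quasi-isomorphism argument is also the paper's: lift $\varphi^{-1}_n\circ n!\,{\rm Alt}$ to a $\V L$-linear comparison map from the Koszul--Rinehart resolution to the bar resolution (both projective resolutions of $A$ in $\mathsf{Mod}(\V L)$) and apply $A_\nabla\otimes_{\V L}-$.
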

\begin{proof}
In view of Theorem \ref{cyclic-duality} it is equivalent to prove that
 the map ${\rm Alt}:\textstyle\bigwedge_A^\bull L\to C^\bull(\V
 L)$
 maps the differential $\partial:\textstyle\bigwedge^n_AL\to\textstyle\bigwedge^{n-1}_A L$ to
\[
\check{b} := \sigma_{n-1}\circ
\tau_n+\sum_{i=0}^{n-1}(-1)^{i+1}\sigma_i,
\]
i.e., ${\rm Alt} \circ \pl = \check{b} \circ {\rm Alt}$ on $C^\bull(\V
L)$.
Since the maps  $\sigma_i$ are just given by the left counit acting on
the $i^{\rm\scriptscriptstyle th}$ slot of the tensor product, 
the second sum is zero when evaluated on the image of ${\rm Alt}$, and
we are left with the term $\sigma_{n-1}\circ\tau_n$, which gives
\[
\sigma_{n-1} \, \tau_n \, {\rm Alt}(aX_1\wedge\cdots\wedge X_n)=
\frac{1}{n!}\sum_{\sigma\in S_n}(-1)^\sigma S(aX_{\sigma(1)})
\cdot\left(X_{\sigma(2)}\otimes^{ll}\cdots\otimes^{ll} X_{\sigma(n)}\right).
\]
Inspection of the calculation \rmref{duoplast}
shows that this is exactly $\frac{1}{n}{\rm Alt}\big(\partial
(aX_1\wedge\cdots\wedge X_n)\big)$. Hence $\varphi^{-1}_n \circ
n! \, {\rm Alt}$ is a morphism of complexes. 
To prove that it
is a quasi-isomorphism, 
consider the so-called Koszul-Rinehart resolution:
\[
A\stackrel{i_A}{\longrightarrow} \V L\stackrel{b'}{\longrightarrow}
\V L\otimes_A L
\stackrel{b'}{\longrightarrow}\V L\otimes_A\textstyle\bigwedge_A^2L\stackrel{b'}
{\longrightarrow}\ldots,
\]
with $b': \V L \otimes_A \bigwedge^n L \to \V L \otimes_A
\bigwedge^{n-1} L$ given by
\begin{equation*}
\begin{split}
\label{LRHB}
b'(D \otimes X_1 \wedge \cdots \wedge X_n) &=\sum^n_{i=1}(-1)^{i-1}DX_i \otimes  X_1 \wedge \cdots \hat{X_i}
\cdots \wedge X_n \\
&+ \!\! \sum_{1 \leq i < j \leq n} (-1)^{i+j} D \otimes  [X_i, X_j] \wedge
X_1 \wedge \cdots \hat{X_i} \cdots \hat{X_j} \cdots \wedge X_n,
\end{split}
\end{equation*}
where $D \in \V L$ and $X_1,\ldots,X_n\in L$. This is a projective resolution of
$A$ in the category $\mathsf{Mod}(\V L)$. 
By the same computation as above, one shows that the map 
\[
n! \big(\id \otimes \varphi_n^{-1} \circ {\rm
  Alt} \big):\V L\otimes_A\textstyle\bigwedge^n_A L\to {\rm Bar}_n(\V L)
\]
is a homotopy equivalence. Taking $A\otimes_{\V L} -$ on both sides, one
finds the map of the proposition. 
This proves that it is a quasi-isomorphism.
\end{proof}

\subsection{Jet spaces}
\subsubsection{The dual jet space of a Lie-Rinehart algebra}
\vspace*{-.2cm} 
In this section we describe another Hopf algebroid associated to a Lie-Rinehart algebra $(A,L)$, 
the Hopf algebroid of $L$-jets. Some of its structure maps have been
used before in the literature, 
cf.\ \cite{NesTsy:DOSLADOHSSAIT,CalVdB:HCAAC}, here we give a complete description: it is in a
 certain sense the dual of $\V L$. ({\em Note added in proof}: this Hopf algebroid was later independently 
reobtained in \cite[App.\ A]{CalRosVdB:HCFLA}.) 
In general, duality in the category
 of bialgebroids has been 
described in \cite{KadSzl:BAODTEAD} (see \cite{BoeSzl:HAWBAAIAD} for an extension to
Hopf algebroids) assuming that the bialgebroid is finitely generated 
projective over the base algebra. This is clearly not the case for $\V
L$, but each successive quotient $\V L_{(p)}\slash \V L_{(p-1)}$ in 
the Poincar\'e-Birkhoff-Witt filtration \eqref{pbwf} is
projective, provided $L$ is projective
over $A$. 

For the rest of this section, let $L$ be finitely generated projective of constant rank
as an $A$-module.
The space of $p$-jets of $(A,L)$ is then defined as
\[
\mathscr{J}^pL:=\Hom_A(\V L_{\leq p},A),
\]
where $\V L_{\leq p}$ denotes the elements in $\V L$ of degree $p$ or less.
The infinite jet space is defined as the projective limit
\[
\mathscr{J}^\infty L:=\lim_{\longleftarrow}\J^pL.
\]
By definition, $\mathscr{J}^\infty L$ is complete with
respect to the canonical PBW filtration \rmref{pbwf}. In this section we
will therefore always complete tensor products using this filtration (cf.\ \cite{Qui:RHT}).

We are now going to show that this space carries the structure of a Hopf
algebroid over $A$:
first of all, there is a commutative algebra structure that can be
described using the (left) comultiplication on $\V L$:
\[
\phi_1\phi_2(D)=\phi_1(D_{(1)})\phi_2(D_{(2)}), \qquad
\phi_1,\phi_2\in \J^\infty L, \ D\in \V L. 
\]
The unit for this multiplication is given by the left counit
$\epsilon_l:\V L\to A$, since
\[
\epsilon_l\phi(D)=\epsilon_l(D_{(1)})\phi(D_{(2)})=\phi(\epsilon_l(D_{(1)})D_{(2)})=\phi(D).
\]
There are two homomorphisms $s,t:A\to \J^\infty L$ given by
\[
\begin{split}
s(a)(D)&:=\epsilon_l(aD)=a\epsilon_l(D),\\
t(a)(D)&:=\epsilon_l(Da)=D(a), \qquad a \in A, \ D \in \V L,
\end{split}
\]
where we recall that here and in the rest of this section $D(a) :=
\eps_l(Da)$, $D \in \V L$, is the canonical $\V
L$-action on $A$ given by extension of the anchor of $(A,L)$.
A small computation shows that the images commute, and therefore
$(\J^\infty L,A,s,t)$ is an $(s,t)$-ring. 
Next, we consider the coproduct. For this we need the following:
\begin{lemma}
\label{iso-jet}
There is a canonical isomorphism
\[
\J^\infty L\otimes_A \J^\infty L\cong\lim_{\longleftarrow\atop p}
\Hom_A\left((\V L\otimes^{rl}_A\V L)_{\leq p},A\right).
\]
\end{lemma}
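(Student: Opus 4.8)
The plan is to deduce the isomorphism from the elementary duality $P^{*}\otimes_{A}Q^{*}\cong (P\otimes_{A}Q)^{*}$ for finitely generated projective $A$-modules, and then to pass to the inverse limit, exploiting that (by the completion convention fixed in this section) the left-hand side is computed by the diagonal system
\[
\J^\infty L\otimes_A\J^\infty L=\varprojlim_p\big(\J^pL\otimes_A\J^pL\big)=\varprojlim_p\Hom_A(\V L_{\leq p},A)\otimes_A\Hom_A(\V L_{\leq p},A),
\]
which is cofinal in, hence agrees with, the double limit $\varprojlim_{i,j}\J^iL\otimes_A\J^jL$.

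First I would record that each filtration level $\V L_{\leq p}$ is finitely generated projective over $A$. By the Poincar\'e--Birkhoff--Witt theorem the successive quotients in the filtration \rmref{pbwf} are $\V L_{\leq p}/\V L_{\leq p-1}\cong S^p_A L$, which is finitely generated projective because $L$ is; hence each inclusion $\V L_{\leq p-1}\hookrightarrow\V L_{\leq p}$ splits as $A$-modules, giving $\V L_{\leq p}\cong\bigoplus_{q\leq p}S^q_A L$. Consequently $\J^pL=\Hom_A(\V L_{\leq p},A)$ is again finitely generated projective, and the transition maps $\J^{p+1}L\to\J^pL$, dual to the split injections $\V L_{\leq p}\hookrightarrow\V L_{\leq p+1}$, are split surjections. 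The same argument shows each $\V L_{\leq p}\otimes^{rl}_A\V L_{\leq p}$ is finitely generated projective.

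Next I would invoke the evaluation pairing
\[
\mu_{p}\colon \Hom_A(\V L_{\leq p},A)\otimes_A\Hom_A(\V L_{\leq p},A)\longrightarrow \Hom_A\big(\V L_{\leq p}\otimes^{rl}_A\V L_{\leq p},A\big),\qquad \xi\otimes\eta\longmapsto\big(D\otimes E\mapsto\xi(D)\eta(E)\big),
\]
which is well defined on $\otimes^{rl}_A$ precisely because $\xi$ is linear for the right-multiplication and $\eta$ for the left-multiplication $A$-module structure and $A$ is commutative. Since $\V L_{\leq p}$ is finitely generated projective, $\mu_p$ is an isomorphism, by the usual reduction along direct summands to a free module of finite rank. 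Naturality of $\mu_p$ in both arguments makes it compatible with the (split) transition maps, so the $\mu_p$ assemble into an isomorphism of inverse systems. Taking $\varprojlim_p$ and using that $\Hom_A(-,A)$ carries the filtered colimit $\V L\otimes^{rl}_A\V L=\varinjlim_p \V L_{\leq p}\otimes^{rl}_A\V L_{\leq p}$ into an inverse limit, together with the cofinality
\[
(\V L\otimes^{rl}_A\V L)_{\leq p}\subset \V L_{\leq p}\otimes^{rl}_A\V L_{\leq p}\subset(\V L\otimes^{rl}_A\V L)_{\leq 2p},
\]
identifies $\varprojlim_p\Hom_A(\V L_{\leq p}\otimes^{rl}_A\V L_{\leq p},A)$ with $\varprojlim_p\Hom_A\big((\V L\otimes^{rl}_A\V L)_{\leq p},A\big)$, the right-hand side of the claim.

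I expect the main obstacle to be bookkeeping of module structures rather than any genuinely hard point: one must keep careful track of which of the two $A$-actions on each tensor factor is in play---left multiplication $a\cdot D=i_A(a)D$ versus right multiplication $D\cdot a=Di_A(a)$---so that $\mu_p$ respects \emph{exactly} the $\otimes^{rl}_A$ relation $Da\otimes E=D\otimes aE$, and one must confirm that the completed tensor product on the left is indeed computed by the diagonal system $\varprojlim_p\J^pL\otimes_A\J^pL$ (equivalently the cofinal product system). Once these identifications are pinned down, the statement is the formal interplay of finite-rank duality with filtered colimits and limits.
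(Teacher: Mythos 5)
There is a genuine gap at the heart of your argument: the evaluation pairing $\mu_p(\xi\otimes\eta)(D\otimes E)=\xi(D)\eta(E)$ is not well defined, and the justification you give for it rests on a false premise. By definition $\J^pL=\Hom_A(\V L_{\leq p},A)$ where $\V L$ carries the $A$-module structure given by \emph{left} multiplication; this choice is forced by the ring structure $\phi_1\phi_2(D)=\phi_1(D_{(1)})\phi_2(D_{(2)})$ on $\J^\infty L$ and is the one in play throughout the section. So $\xi(aD)=a\xi(D)$, but $\xi$ is \emph{not} linear for right multiplication: since $\V L$ is noncommutative one has $Xa=aX+X(a)$ for $X\in L$, $a\in A$, whence $\xi(Xa)=a\xi(X)+X(a)\xi(1)\neq \xi(X)a$ in general. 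Consequently $\mu_p$ does not respect the $\otimes^{rl}$ relation: in $\V L\otimes^{rl}_A\V L$ one has $Xa\otimes 1=X\otimes a$, yet
\[
\xi(Xa)\eta(1)-\xi(X)\eta(a)=X(a)\,\xi(1)\eta(1),
\]
which for $\xi=\eta=\eps_l$ (the unit of $\J^\infty L$) equals $X(a)$, nonzero whenever the anchor is nontrivial. For the same reason $\mu_p$ fails to annihilate the ideal $\{t(a)\phi_1\otimes\phi_2-\phi_1\otimes s(a)\phi_2\}$ defining $\J^\infty L\otimes_A\J^\infty L$, since $(t(a)\phi_1)(D)=\phi_1(Da)$. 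The elementary duality $P^*\otimes_AQ^*\cong(P\otimes_AQ)^*$ you invoke concerns a tensor product in which a single $A$-action is used on each factor; $\otimes^{rl}$ mixes the two distinct actions on $\V L$, which genuinely differ, and that is exactly where the reduction to free modules breaks down.

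The paper's proof uses a different map, and that difference is the actual content of the lemma: one sends $\phi_1\otimes\phi_2$ to $D\otimes E\mapsto\phi_1\bigl(D\,\phi_2(E)\bigr)$, i.e.\ the scalar $\phi_2(E)$ is inserted into the \emph{argument} of $\phi_1$ by right multiplication inside $\V L$, rather than multiplied against the value $\phi_1(D)$ in $A$. This is well defined on $\otimes^{rl}$ because $\phi_2(aE)=a\phi_2(E)$ gives $D\phi_2(aE)=Da\,\phi_2(E)$, and it kills the ideal on the jet side because $(t(a)\phi_1)(D\phi_2(E))=\phi_1(D\phi_2(E)a)=\phi_1(Da\,\phi_2(E))$, the images of $A$ commuting in $\V L$; the paper's computation shows that under this pairing the two relations correspond exactly. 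Your surrounding scaffolding---finite generation and projectivity of each $\V L_{\leq p}$ via PBW, split transition maps, passage to inverse limits and cofinality---is sound, and in fact supplies detail the paper leaves implicit when it asserts bijectivity; but it cannot repair the pairing itself. This is not bookkeeping to be checked at the end: the formula for the map must be the twisted one.
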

\begin{proof}
By definition, $\J^\infty L\otimes_A \J^\infty L$ is the quotient of
 $\J^\infty L\otimes_k \J^\infty L$
by the ideal generated by 
$
\{t(a)\phi_1\otimes\phi_2-\phi_1\otimes s(a)\phi_2, \ \phi_1,
\phi_2\in\J ^\infty L, a \in A \}.
$
The first term in this ideal, evaluated on $D\otimes E\in\V L\otimes_k \V L$, we write out as:
\[
\begin{split}
(t(a)\phi_1\otimes\phi_2)(D\otimes E)&=t(a)\phi_1(D)\otimes\phi_2(E)\\
&=D_{(1)}(a)\phi_1(D_{(2)})\otimes\phi_2(E)\\
&=\phi_1\big(\eps_l(D_{(1)}a)D_{(2)}\big)\otimes\phi_2(E) =\phi_1(Da)\otimes\phi_2(E).
\end{split}
\]
The second term gives
\[
(\phi_1\otimes s(a)\phi_2)(D\otimes E) =\phi_1(D)\otimes
a\epsilon_l(E_{(1)})\phi_2(E_{(2)}) =\phi_1(D)\otimes \phi_2(aE).
\]
Remark that these two expressions use exactly the $(A,A)$-bimodule structure on $\V L$ used in the 
$\otimes^{rl}$-tensor product.
It therefore follows that the map 
\[
\phi_1\otimes\phi_2\mapsto \{D\otimes E\mapsto \phi_1(D\phi_2(E))\}
\]
 induces the desired isomorphism.
\end{proof}
Observe now that the product on $\V L$ descends to a map $m:\V L\otimes^{rl} \V L\to \V L$. 
We can therefore dualise the product to obtain a coproduct \mbox{$\Delta:\J^\infty L\to \J^\infty L\otimes_A \J^\infty L$,}
\begin{equation*}
\label{prod-jet}
\phi(DE)=:\Delta(\phi)(D\otimes^{rl} E)=\phi_{(1)}\left(D\phi_{(2)}(E)\right).
\end{equation*}
Associativity of the multiplication implies that $\Delta$ is coassociative. 
The counit for this coproduct is given by $\eps:
\phi\mapsto\phi(1_{\scriptscriptstyle{\V L}})$. It is now easy to verify that 
$(\J^\infty L,A,s,t,\epsilon,\Delta)$ is a left bialgebroid, and since $\J^\infty L$ is commutative, it is also
a right bialgebroid. Hence, to obtain a Hopf algebroid all we need is an antipode.

As observed in \cite{NesTsy:DOSLADOHSSAIT}, there are two left $\V L$-module structures on $\J^\infty L$.
First there is the ``obvious'' module structure given by
\[
(D\cdot_1\phi)(E):=\phi(ED), \qquad \phi\in\J^\infty L, \ D, E \in \V L.
\]
Second, there is another left $\V L$-module structure constructed as follows: consider the
$A$-module structure defined by left 
multiplication by the source map, i.e., $(a\cdot\phi)(D):=(s(a)\phi )(D)=\phi(aD).$
On this $A$-module, there is a canonical left connection, also called
the {\em Grothendieck connection}, given by
\begin{equation}
\label{GK}
\nabla^l_X(\phi)(D):= X \big(\phi(D)\big)-\phi(XD), 
\qquad \phi\in\J^\infty L, \ D\in\V L, \ X \in L. 
\end{equation}
One easily checks that this connection is flat, and we can
write the induced $\V L$-module structure 
as
\[
\begin{split}
(D\cdot_2\phi)(E)
&=D_+\big(\phi(D_-E)\big), \qquad D, E \in \V L,
\end{split}
\]
where we used the canonical 
left Hopf algebroid
structure on $\V L$, cf.\ Remark \ref{hopf-times}.
With respect to the coproduct, these two module structures satisfy:
\begin{equation}
\label{por}
\begin{split}
\Delta(D\cdot_1\phi)&=D\cdot_1\phi_{(1)}\otimes\phi_{(2)}\\
\Delta(D\cdot_2\phi)&=\phi_{(1)}\otimes D\cdot_2\phi_{(2)}
\end{split}
\end{equation}
We now define the antipode on $\J^\infty L$ to be
\[
(S\phi)(D):=\eps(D\cdot_2\phi)=D_+\big(\phi(D_-)\big).
\]
By construction, this is the map that intertwines the two module structures.

\begin{theorem}
\label{jet-ha}
Equipped with this antipode, $\J^\infty L$ is a Hopf algebroid with
involutive antipode in the sense of Definition
{\rm \ref{def-hopf-algbd}}.
\end{theorem}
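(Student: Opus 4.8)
The plan is to use the commutativity of $\J^\infty L$ to separate the verification into a routine part, concerning the two bialgebroid structures and their compatibility \rmref{Subr}--\rmref{TwCoAssoc}, and the genuinely antipodal part \rmref{TwAp} together with $S^2=\id$. Since $\J^\infty L$ is commutative, I would define the right bialgebroid from the left one exactly as in \S\ref{baldvier}, i.e.\ $s_r:=t$, $t_r:=s$, $\gD_r:=\gD$, $\eps_r:=\eps$; this is legitimate because $(\J^\infty L,A,s,t,\eps,\gD)$ has already been shown above to be a left bialgebroid. With these identifications the four equalities \rmref{Subr} all collapse to the counit relations $\eps\circ s=\eps\circ t=\id_A$ (verified on $1_{\V L}$ already in the construction of $s,t,\eps$), while twisted coassociativity \rmref{TwCoAssoc} reduces to the ordinary coassociativity of $\gD$ dual to the associativity of the product on $\V L$, which is immediate. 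The condition in Definition~\ref{def-hopf-algbd}\,(iii), after substituting $t_l=t$, $t_r=s$, $s_r=t$, $s_l=s$, becomes the twisted $A$-bilinearity $S\big(t(a_1)\,\phi\,s(a_2)\big)=t(a_2)\,(S\phi)\,s(a_1)$, which I would check directly from $(S\phi)(D)=D_+(\phi(D_-))$ using that $s$ and $t$ dualize left multiplication by $i_A(a)$ in the two legs of $\V L\otimes^{rl}\V L$.

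The core observation is that the entire antipodal structure on $\J^\infty L$ is dual to the canonical \emph{left} Hopf algebroid structure on $\V L$ from Remark~\ref{hopf-times}: the coproduct $\gD$ dualizes the multiplication $m\colon\V L\otimes^{rl}\V L\to\V L$, and $S$ dualizes the translation map $D\mapsto D_+\otimes D_-$. I would therefore prove the antipode axioms \rmref{TwAp} by pairing each side against an arbitrary $D\in\V L$ and rewriting the resulting identity as one of the Schauenburg relations \rmref{Sch1}--\rmref{Sch47}. Concretely, pairing $\mu(S\otimes\id)\gD=s_r\eps_r=t\eps$ with $D$ and expanding the product (dual to $\gD_l$) and $\gD$ (dual to $m$) exhibits the left-hand side as the dual of the combination $D_{+(1)}\otimes D_{+(2)}D_-$ from \rmref{Sch1}, so that the axiom follows from that identity; the second antipode axiom is obtained dually from \rmref{Sch47}. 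The compatibility relations \rmref{por} between $\gD$ and the two module structures $\cdot_1,\cdot_2$ on $\J^\infty L$ are the bookkeeping device that keeps track of which leg the antipode lands in during this dualization.

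For involutivity I would use that, by its very construction through the Grothendieck connection \rmref{GK}, the map $S$ is the isomorphism intertwining the two left $\V L$-module structures, $S(D\cdot_1\phi)=D\cdot_2 S\phi$. Applying $S$ a second time and feeding in the higher Schauenburg identity \rmref{Sch2} together with $D_+D_-=\eps_l(D)$ from \rmref{Sch47} returns the original structure $\cdot_1$ and yields $(S^2\phi)(D)=\phi(D)$ for all $D$; alternatively one may confirm $S^2=\id$ on the dual of a PBW basis using the explicit expansion of $D_+\otimes D_-$ recorded in Remark~\ref{hopf-times}. I expect this involutivity, rather than the other axioms, to be the main obstacle, precisely because it forces one to track the interaction of $\cdot_1$ and $\cdot_2$ through the left Hopf algebroid identities. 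Throughout, one extra point must be monitored: every map above has to be well defined on the projective limit $\J^\infty L=\lim_{\longleftarrow}\Hom_A(\V L_{\leq p},A)$, i.e.\ it must respect the PBW filtration \rmref{pbwf} and the completed tensor products; this holds because $m$, $\gD_l$ and the translation map are all filtered, so each dualizes to a continuous map on the completion.
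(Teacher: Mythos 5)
Your overall strategy is the paper's: use commutativity of $\J^\infty L$ to obtain the right bialgebroid with $s_r=t$, $t_r=s$, $\gD_r=\gD$, so that axioms \rmref{Subr}--\rmref{TwCoAssoc} collapse, and then reduce the antipodal axioms to the Schauenburg identities \rmref{Sch1}--\rmref{Sch47} of the canonical left Hopf algebroid structure on $\V L$ by pairing against $D\in\V L$. However, there is a genuine gap in your treatment of \rmref{TwAp}: the \emph{first} antipode axiom cannot be obtained by the direct pairing you describe. The duality between $\gD$ and the multiplication of $\V L$ is intrinsically one-sided, $\phi(DE)=\phi_{(1)}\big(D\,\phi_{(2)}(E)\big)$, with $\phi_{(2)}$ nested \emph{inside} the argument of $\phi_{(1)}$. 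Pairing $S(\phi_{(1)})\phi_{(2)}$ with $D$ and pushing through \rmref{Sch2} and the counit identities (using cocommutativity of $\V L$) leads to the expression $\phi_{(2)}\big(D_+\,\phi_{(1)}(D_-)\big)$ --- the legs appear in the \emph{wrong} order, and since $\gD$ on $\J^\infty L$ is dual to the noncommutative product of $\V L$ it is not cocommutative, so this cannot be collapsed to $\phi(D_+D_-)$. Only the \emph{second} identity of \rmref{TwAp} has the correct nesting: there one gets $\phi_{(1)}\big(D_+\phi_{(2)}(D_-)\big)=\phi(D_+D_-)=\eps_l(D)\phi(1)$ via \rmref{Sch2} \emph{and} \rmref{Sch47} (not \rmref{Sch47} alone, as you state). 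This is exactly why the paper first proves that $S$ is an algebra homomorphism --- using that the Grothendieck connection acts by derivations, so $D\cdot_2(\phi_1\phi_2)=(D_{(1)}\cdot_2\phi_1)(D_{(2)}\cdot_2\phi_2)$ --- and proves $S^2=\id$ and $S\circ s=t$ first; then one identity of \rmref{TwAp} suffices, the other following by applying the involutive algebra map $S$. Your proposal omits multiplicativity of $S$ altogether, which is precisely the ingredient that makes this reduction available; without it, your plan to verify both identities of \rmref{TwAp} directly breaks down on the first one.

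The second gap concerns $S^2=\id$. The identity you need is $D_+D_{-+}\otimes D_{--}=1\otimes D$ in $\V L\otimes^{rl}\V L$, and this does \emph{not} follow from \rmref{Sch2} and \rmref{Sch47} alone: it is false for general left Hopf algebroids (already for noncocommutative Hopf algebras, where $D_+D_{-+}\otimes D_{--}=D_{(1)}S(D_{(3)})\otimes S^2(D_{(2)})$). The paper's argument applies the \emph{injective} Hopf-Galois map $\beta$ to $D_+D_{-+}\otimes D_{--}$, computes its image to be $1\otimes D$ using \rmref{Sch1}, and crucially invokes the \emph{cocommutativity} of $\V L$ --- neither the injectivity of $\beta$ nor cocommutativity appears in your primary route. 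Your fallback of checking $S^2=\id$ against the explicit PBW expansion of $D_+\otimes D_-$ from Remark \ref{hopf-times} is viable, and your attention to well-definedness on the completed tensor products is a point the paper leaves implicit, but as stated the two central antipodal claims rest on identities that do not hold in the generality you assume.
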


\begin{proof}
Since $L$ acts on $\V L$ via \eqref{GK} by derivations, $L\to 
\Der_k(\J^\infty L)$ is a morphism of Lie algebras. It therefore
follows from the Poincar\'e-Birkhoff-Witt theorem 
 that
\begin{equation*}
\label{coprd-mod}
D\cdot_2(\phi_1\phi_2)=(D_{(1)}\cdot_2\phi_1)(D_{(2)}\cdot_2\phi_2).
\end{equation*}
Using this property, one finds that $S$ is a homomorphism of commutative algebras:
\[
\begin{split}
S(\phi_1\phi_2)(D)
&=(D\cdot_2(\phi_1\phi_2))(1_{\scriptscriptstyle{\V L}}) \\
&=((D_{(1)}\cdot_2\phi_1)(D_{(2)}\cdot_2\phi_2))(1_{\scriptscriptstyle{\V
    L}}) 
=((S\phi_1)(S\phi_2))(D).
\end{split}
\]
To prove that $S^2 = \id$, one first computes
$
(S^2 \phi)(D) = \eps_l\big(D_+D_{-+}\phi(D_{--})\big),
$
using the properties of a left counit.
Next, to find a simpler expression for $D_+D_{-+} \otimes
D_{--} \in VL \otimes^{rl} VL$, apply the Hopf-Galois map
$\beta$
from Remark \ref{hopf-times} to it:
\begin{equation*}
\begin{split}
\beta(D_+D_{-+} \otimes D_{--}) &= D_{+(1)}D_{-+(1)} \otimes D_{+(2)}D_{-+(2)}D_{--} \\
&= D_{+(1)}D_{-} \otimes D_{+(2)} \\
&= 1 \otimes D \in VL \otimes^{ll} VL, 
\end{split}
\end{equation*}
where \rmref{Sch1} and the fact that $VL$ is cocommutative were used. Hence 
$$
D_+D_{-+} \otimes D_{--} = \beta^{-1}(1 \otimes D) = 1_+
\otimes 1_- D = 1\otimes D \in VL \otimes^{rl} VL,
$$
and therefore $(S^2 \phi)(D) = \phi(D)$.

We now verify the axioms in
Definition \ref{def-hopf-algbd}: since $s = s_l=t_r$, $t = t_l=s_r$, 
the first one is trivially 
satisfied, whereas the second is equivalent to the coassociativity of
$\Delta$, because $\gD = \Delta_l=\Delta_r$. 
For the third one, with \rmref{Sch1}, \rmref{Sch47}, and the Leibniz rule
for the canonical left $\V L$-action on $A$ we compute:
\begin{equation*}
S\big(s(a)\big)(D)= D_+ \big(a \eps_l (D_-)\big) = \eps_l(D_{+(1)}a)\eps_l(D_{+(2)}D_-) =
D(a) = t(a)(D), 
\end{equation*}
for $a\!\in\! A$, $D \!\in\! VL$, and $S \circ t = s$ then follows using $S^2 = \id$.
Finally, since $S$ is an algebra
homomorphism and an involution, it suffices to prove one of the two identities in \rmref{TwAp}. 
For example, with \rmref{Sch2} and \rmref{Sch47} we obtain
\[
\begin{split}
\phi_{(1)}S(\phi_{(2)})(D)&=\phi_{(1)}(D_{(1)})D_{(2)+}\big(\phi_{(2)}(D_{(2)-})\big)\\
&=\phi_{(1)}(D_{+(1)})D_{+(2)}\left(\phi_{(2)}(D_-)\right)\\
&=\phi_{(1)}(D_+\phi_{(2)}(D_-))\\
&=\phi(D_+D_-)\\
&=\eps_l(D)\phi(1) = s(\eps(\phi))(D),
\end{split}
\]
and this is precisely the second identity in \eqref{TwAp}.
This completes the proof that $\J^\infty L$ has the structure of a
Hopf algebroid with involutive antipode.
\end{proof}
\begin{remark}
Theorem \ref{jet-ha} is remarkable in the sense that whereas the universal enveloping algebra 
$\V L$ of a Lie-Rinehart algebra carries no canonical Hopf algebroid structure, its dual $\J^\infty L$
is a Hopf algebroid without making further choices. Close inspection of the preceding proof shows
that the Hopf algebroid structure---more precisely the antipode---depends solely on the 
{\em left} Hopf algebroid
structure on $\V L$ which is canonical, i.e.\ does not depend on the choice of a flat right connection.
\end{remark}
\begin{remark}
In the construction of the jet space---now written as
$\J_l^\infty L$---we considered $\V L$ 
as an $A$-module by left multiplication. {\em Right} multiplication
leads to a space $\J^\infty_rL$, {\it a priori} without much structure.
Only after introducing a flat right $(A,L)$-connection on $A$ we can introduce a ring structure using the right 
coproduct $\Delta_r$ on $\V L$, as well as source and target maps 
using the right counit $\epsilon_r$. This does again lead to a Hopf algebroid, but one easily 
proves that the map $\phi\mapsto\phi\circ S$ defines an isomorphism $\J_l^\infty L\to\J^\infty_r L$ of
Hopf algebroids, where $S$ is the antipode on $\V L$ constructed from
the same flat right connection as in 
Proposition \ref{AntipProp}. 
\end{remark}

\subsubsection{The cyclic theory of $\J^\infty L$}
Let $(A,L)$ be a Lie-Rinehart algebra. 
If $L$ is $A$-projective, Lie-Rinehart cohomology with values in $A$ (cf.\ \rmref{VLExt}) can be
computed by the 
complex $\big(\! \Hom_A({\textstyle{\bigwedge^\bull_A}} L, A), d\big)$ with
differential 
$d: {\textstyle{\bigwedge^n_A}} L \to {\textstyle{\bigwedge^{n+1}_A}} L$
defined 
\nolinebreak
by
\begin{equation*}
\label{deRham}
\begin{split}
d\omega(X_0\wedge\cdots\wedge X_n)&= \sum_{i=0}^n(-1)^iX_i\big(\omega(X_0,\ldots,\hat{X}_i,\ldots, X_n)\big)\\
&\quad+\sum_{i<j}(-1)^{i+j}\omega([X_i,X_j],X_0,\ldots,\hat{X}_i,\ldots,\hat{X}_j,\ldots,X_n).
\end{split}
\end{equation*}
\begin{theorem}
Let $(A,L)$ be a Lie-Rinehart algebra, 
where $L$ is finitely generated
$A$-projective of constant rank. There are canonical isomorphisms
\[
\begin{split}
HH^\bull(\J^\infty L)&\cong H^\bull(L,A),\\
HC^\bull(\J^\infty L)&\cong{\textstyle{\bigoplus\limits_{i\geq 0}}} \,
H^{\bull+2i}(L,A).
\end{split}
\]
\end{theorem}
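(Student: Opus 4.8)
The plan is to reduce both isomorphisms to the structure theorems of Section~\ref{cyclictheory}, combined with the (filtered) duality between the coalgebra $\J^\infty L$ and the algebra $\V L$. First I would record that $\J^\infty L$ is commutative, and in particular flat over $A$: with respect to the PBW filtration \eqref{pbwf} each stage is the $A$-linear dual of the finitely generated projective module $\V L_{(p)}$, so flatness holds degreewise. Theorem~\ref{hoch-tor} then gives $HH^\bullet(\J^\infty L)\cong\Cotor^\bullet_{\J^\infty L}(A,A)$, and the whole Hochschild computation is thereby moved to this $\Cotor$.

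The key step is to identify this $\Cotor$ with an $\Ext$ over $\V L$. By construction $\J^\infty L=\varprojlim_p\Hom_A(\V L_{\leq p},A)$ with coproduct dual to the multiplication $m:\V L\otimes^{rl}\V L\to\V L$ (this is exactly Lemma~\ref{iso-jet}), so that on each filtered stage $\J^\infty L$ is the honest $A$-dual of a finitely generated projective module. Since $L$ is finitely generated $A$-projective, this pairing is involutive on the filtered pieces and induces an equivalence between (complete, filtered) $\J^\infty L$-comodules and $\V L$-modules, carrying the trivial comodule $A$ to the trivial module $A$ (the latter via $\eps_l$). Under this equivalence the left cotensor functor $A\bx_{\J^\infty L}(-)$ is sent to the invariants $\{m\mid D\cdot m=\eps_l(D)m\}=\Hom_{\V L}(A,-)$, and the cobar resolution of $A$ from Section~\ref{schondreiuhr} corresponds to the bar resolution in $\mathsf{Mod}(\V L)$. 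Hence the two right derived functors agree, $\Cotor^\bullet_{\J^\infty L}(A,A)\cong\Ext^\bullet_{\V L}(A,A)$, which is by \eqref{VLExt} precisely $H^\bullet(L,A)$. This establishes the first isomorphism.

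For the cyclic part I would invoke Theorem~\ref{cocomm-cycl}(i). One checks that $\J^\infty L$ satisfies its hypotheses: it is a commutative Hopf algebroid over a single base $A=A_l=A_r$ with $s_l=t_r=s$, $t_l=s_r=t$, $\Delta_l=\Delta_r=\Delta$ and $\eps_l=\eps_r=\eps$ — exactly the coincidences built into the Hopf algebroid structure in Theorem~\ref{jet-ha} — and it is flat over $A$ by the remark above. Theorem~\ref{cocomm-cycl}(i) then expresses $HC^\bullet(\J^\infty L)$ as a direct sum of $2$-fold shifted copies of $HH^\bullet(\J^\infty L)$, and substituting the first isomorphism yields the claimed decomposition into shifted copies of $H^\bullet(L,A)$.

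The hard part will be making the comodule--module duality of the middle paragraph rigorous in the complete, filtered setting: one must verify that the cotensor product and the cobar differential dualize exactly to $\Hom_{\V L}(A,-)$ and the bar/$\Ext$ differential, and that the completed tensor products behave as the PBW filtration demands (cf.\ the conventions fixed before Lemma~\ref{iso-jet}). This is where finite generation and projectivity of $L$ are indispensable, since $\V L$ itself is neither finite nor reflexive over $A$; the comparison of the two derived functors then reduces to a degreewise statement on the finitely generated projective stages $\V L_{\leq p}$, on which $\Hom_A(-,A)$ is exact and the duality is visibly an equivalence.
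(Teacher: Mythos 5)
Your strategy is sound and your final step coincides with the paper's, but your Hochschild step takes a genuinely different route. The paper invokes no comodule--module duality at all: it replaces the cobar resolution by an explicit small cofree resolution of $A$ in the category of left $\J^\infty L$-comodules, namely the dual Koszul--Rinehart complex $\J^\infty L\otimes_A\textstyle\bigwedge^\bull_AL^*$, whose differential is the prolongation of the Grothendieck connection \eqref{GK} and whose cofreeness follows from \eqref{por}; cotensoring with $A$ via \eqref{staub} then lands directly on the Rinehart complex $\bigl(\Hom_A(\textstyle\bigwedge^\bull_AL,A),d\bigr)$, which computes $H^\bull(L,A)$ because $L$ is projective. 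You instead identify $\Cotor^\bull_{\J^\infty L}(A,A)$ with $\Ext^\bull_{\V L}(A,A)$, which is $H^\bull(L,A)$ by the very definition \eqref{VLExt}. Your approach buys conceptual clarity: no choice of small resolution or connection, and a direct explanation of why the jet Hopf algebroid sees $\V L$-cohomology. The paper's approach buys an explicit quasi-isomorphism onto the Chevalley--Eilenberg--Rinehart complex with all differentials identified concretely. Both proofs then obtain the cyclic statement from Theorem \ref{cocomm-cycl}\,{\it i}), using commutativity of $\J^\infty L$ and the involutive antipode of Theorem \ref{jet-ha}, so that part is identical.

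Two points in your middle paragraphs need repair, though both are repairable with the stated hypotheses. First, flatness of $\J^\infty L$ over $A$ is \emph{not} a ``degreewise'' consequence of the PBW filtration: an inverse limit of finitely generated projective modules need not be flat, so the hypothesis of Theorem \ref{hoch-tor} cannot be verified this way. What actually makes the completed theory work is the isomorphism $\J^\infty L\,\hat{\otimes}_A M\cong\Hom_A(\V L,M)$, valid because each filtration stage $\V L_{\leq p}$ is finitely generated projective; this functor is exact since $\V L$ is $A$-projective by PBW. Second, rather than asserting an equivalence between complete filtered $\J^\infty L$-comodules and all $\V L$-modules (delicate even to formulate precisely), it is cleaner to implement your idea at the level of complexes: iterating Lemma \ref{iso-jet} gives
\[
C^n(\J^\infty L)\;\cong\;\Hom_A\bigl(\V L^{\otimes^{rl}n},A\bigr)\;\cong\;\Hom_{\V L}\bigl(\V L^{\otimes^{rl}(n+1)},A\bigr),
\]
and under these identifications the Hochschild codifferential of the coalgebra $\J^\infty L$ is the transpose of the bar differential of the $A$-ring $\V L$. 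Since the augmented bar complex $\V L^{\otimes^{rl}(\bull+1)}\to A$ is a projective resolution in $\mathsf{Mod}(\V L)$ (again by $A$-projectivity of $\V L$), this exhibits $HH^\bull(\J^\infty L)\cong\Ext^\bull_{\V L}(A,A)$ directly, without any categorical duality or comparison of derived functors.
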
 
\begin{proof}
Denote $L^* := \Hom_A(L,A)$. By the given conditions we
have
$\textstyle\bigwedge_A^\bull L^* \cong
\Hom_A({\textstyle{\bigwedge^\bull_A}} L,A)$.
To compute Hochschild cohomology, instead of the cobar resolution
one can use the dual of the Koszul-Rinehart 
resolution given by (cf.\ \cite{NesTsy:DOSLADOHSSAIT}) 
\[
0\longrightarrow A\stackrel{s}{\longrightarrow} \J^\infty
L\stackrel{\nabla}{\longrightarrow}\J^\infty L
\otimes_A\textstyle\bigwedge^1_AL^*\stackrel{\nabla}{\longrightarrow} 
\J^\infty L\otimes_A\textstyle\bigwedge^2_AL^*\stackrel{\nabla}{\longrightarrow}\ldots,
\]
where $\nabla$ is the continuation of the Grothendieck connection,
cf.\ \rmref{GK}:
\[
\begin{split}
\nabla(\phi&\otimes\omega) (X_1,\ldots,X_{n+1}) =\\
&=\sum_{i=1}^{n+1}(-1)^{i-1}\nabla^l_{X_i}\phi\otimes\omega(X_1,\ldots,\hat{X}_i,\ldots,X_{n+1})\\
& \qquad
+\sum_{i<j}(-1)^{i+j}\phi\otimes\omega([X_i,X_j],X_1\ldots,\hat{X}_i,\ldots,\hat{X}_j,\dots,X_{n+1}),
\end{split}
\]
for $\phi\in\J^\infty L$, $\omega\in\textstyle\bigwedge^n_AL^*$ and $X_1,\ldots,X_{n+1}\in L$.
 It follows from \eqref{por} that this is indeed a cofree resolution
 of $A$ in the category of left $\J^\infty L$-comodules 
(remark that $s:A\to\J^\infty L$ is a morphism of left $\J^\infty L$-comodules).
 To compute the $\Cotor$ groups, we take invariants and apply the isomorphism \rmref{staub}:
 \[
 \textstyle\bigwedge^\bull_AL^*\stackrel{\cong}
{\longrightarrow}A\bx_{\J^\infty L}\left(\J^\infty L\otimes_A\textstyle\bigwedge^\bull_A L^*\right),
 \]
given by $X_1\wedge\cdots\wedge X_n\!\mapsto\! 1_{\scriptscriptstyle{A}}\!\otimes\! 1_{\scriptscriptstyle{\J^\infty L}}\!\otimes\! X_1\wedge\cdots\wedge X_n$. 
Since the unit in $\J^\infty L$ is given 
by the left counit $\eps_l\! : \!\V L\!\to\! A$, the induced differential is 
exactly the differential for Lie-Rinehart cohomology. This proves the isomorphism for Hochschild cohomology.
The second isomorphism on cyclic cohomology follows from Theorem~\ref{cocomm-cycl}~{\it i}).
\end{proof}
\begin{theorem} 
\label{jet-hom}
Let $(A,L)$ be a Lie-Rinehart algebra, where $L$ is finitely generated
$A$-projective of constant rank.
There is a natural morphism of mixed complexes
\[
F:\left(C_\bull(\J^\infty L),b,B\right)\to \left(\textstyle{\textstyle\bigwedge_A^\bull L^*},0,d\right)
\] 
defined in degree $n$ by
\[
F(\phi^1\otimes\cdots\otimes \phi^n)(X_1\wedge\cdots\wedge X_n):=
(-1)^n\left(S(\phi^1)\wedge\cdots\wedge S(\phi^n)\right)(X_1,\ldots, X_n),
\]
which induces isomorphisms
\[
\begin{split}
HH_\bull(\J^\infty L)&\cong\textstyle\bigwedge_A^\bull L^*,\\
HP_\bull(\J^\infty L)&\cong \prod_{i\geq 0} H^{\bull+2i}(L,A).
\end{split}
\]
\end{theorem}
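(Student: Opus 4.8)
The plan is to prove that $F$ is a morphism of mixed complexes from $\bigl(C_\bull(\J^\infty L),b,B\bigr)$ to $\bigl(\bigwedge_A^\bull L^*,0,d\bigr)$ and that it induces an isomorphism on Hochschild homology; because the target carries the zero Hochschild differential, the asserted formulae for $HH_\bull$ and $HP_\bull$ then drop out of the standard mixed-complex formalism. I would begin with the Hochschild isomorphism. By Theorem~\ref{hoch-tor} one has $HH_\bull(\J^\infty L)\cong\Tor^{\J^\infty L}_\bull(A,A)$, with $A$ viewed as a $\J^\infty L$-module through the counit $\eps$. The Poincar\'e--Birkhoff--Witt theorem gives $\V L\cong S_AL$ as $A$-coalgebras, so dualising yields an isomorphism of $A$-algebras $\J^\infty L\cong\widehat{S}_A(L^*)$ with the completed symmetric algebra on $L^*=\Hom_A(L,A)$, under which $\eps$ becomes the augmentation and $A\cong\J^\infty L/(L^*)$. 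The Koszul resolution $\J^\infty L\otimes_A\bigwedge_A^\bull L^*\to A$, with differential given by multiplication by the generators, is a projective resolution of $A$ in $\mathsf{Mod}(\J^\infty L)$; applying $A\otimes_{\J^\infty L}-$ annihilates this differential and leaves $\bigwedge_A^\bull L^*$ with zero differential, so $HH_\bull(\J^\infty L)\cong\bigwedge_A^\bull L^*$. One then checks that $F$ realises precisely this isomorphism as passage to the leading symbol, exactly as the projection $(S_AL)^{\otimes n}\to\bigwedge_A^n L$ was a one-sided inverse to $\mathrm{Alt}$ in the proof of Theorem~\ref{LRCyclCohom}, the antipode and the sign $(-1)^n$ being arranged so that on leading symbols $F$ becomes this antisymmetrisation. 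As in Theorem~\ref{LRCyclCohom}, the case of $L$ merely projective is reduced to the finitely generated one by writing $L$ as a filtered colimit of finitely generated projectives, using that both $\Tor$ and $\widehat{S}$ commute with such colimits.

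It remains to check that $F$ is a morphism of mixed complexes. Compatibility with the Hochschild differentials is the statement $F\circ b=0$: since $\J^\infty L$ is commutative and $S$ is an involutive algebra homomorphism, the inner face maps of \eqref{DualHH} place a product $S\phi^i\,S\phi^{i+1}$ into a single wedge slot, and a Leibniz computation shows that after full antisymmetrisation these terms telescope and cancel against the two counit faces $d_0$ and $d_n$, dually to the identity $b\circ\mathrm{Alt}=0$. The decisive relation is $d\circ F=F\circ B$. I would write $B=N\gs_{-1}(1-\gl)$ as in the proof of Theorem~\ref{LRCyclCohom}, substitute the cyclic operator $t_n$ of \eqref{DualCyc} together with the explicit antipode $(S\phi)(D)=D_+\bigl(\phi(D_-)\bigr)$, and then reduce the outcome to the Lie--Rinehart differential $d$ using the left Hopf algebroid identities \eqref{Sch1}, \eqref{Sch2}, \eqref{Sch47}, the Grothendieck connection \eqref{GK}, and the intertwining relations \eqref{por} between the two $\V L$-actions $\cdot_1,\cdot_2$ on $\J^\infty L$. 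This is the precise dual of the computation \eqref{duoplast}.

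Once $F$ is known to be a morphism of mixed complexes inducing an isomorphism on Hochschild homology, it is automatically a quasi-isomorphism of mixed complexes, hence induces isomorphisms on every associated cyclic invariant. The periodic cyclic homology of the target $\bigl(\bigwedge_A^\bull L^*,0,d\bigr)$ is computed by the product totalisation of its $(b,B)$-bicomplex, whose differential is simply $b+B=d$; its homology is the Lie--Rinehart cohomology arranged periodically, giving $HP_\bull(\J^\infty L)\cong\prod_{i\geq0}H^{\bull+2i}(L,A)$. The genuinely delicate point, and the main obstacle, is the identity $d\circ F=F\circ B$: carrying the antipode and both module structures $\cdot_1,\cdot_2$ through the Connes boundary and collapsing the result onto the de~Rham differential is where the canonical \emph{left} Hopf algebroid structure of $\V L$, rather than any choice of flat right connection, enters in an essential way.
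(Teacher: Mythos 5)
Your overall architecture is sound, and the Hochschild half of your argument is essentially the paper's: dual PBW gives $\J^\infty L\cong\hat{S}_AL^*$, and one compares the bar resolution with the Koszul resolution $\J^\infty L\otimes_A\textstyle\bigwedge^\bull_AL^*\to A$, so that applying $A\otimes_{\J^\infty L}-$ yields $HH_\bull(\J^\infty L)\cong\textstyle\bigwedge^\bull_AL^*$, with $F$ realising the identification (the sign $(-1)^n$ coming from $S(\ga\circ\pr)=-\ga\circ\pr$ for $\ga\in L^*$). Two corrections there: the reduction you invoke (projective to finitely generated via filtered colimits) is not the relevant one, since the theorem already assumes $L$ finitely generated projective of constant rank; what is actually needed is the passage from projective to \emph{free}, which the paper achieves by localising at maximal ideals $\frakm\subset A$, using that $F$ is $A$-linear for the action through $t$ on the first tensor factor. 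Your Koszul differential ``multiplication by the generators'' only literally makes sense after such a localisation.

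The genuine gap is exactly the point you flag yourself: the identity $d\circ F=F\circ B$ is never proved. You announce a computation (substitute the cyclic operator \eqref{DualCyc} and the explicit antipode, then reduce via \eqref{Sch1}, \eqref{Sch2}, \eqref{Sch47}) and call it the main obstacle, but this computation \emph{is} the hard content of the theorem beyond the Hochschild part, so leaving it as a plan leaves the theorem unproved. The paper closes this step without any such computation, by a factorisation: since $\J^\infty L$ is commutative, $B_n(\J^\infty L)\cong C_{n+1}(\J^\infty L)$, and in this example the invariants map $\Psi_{\scriptscriptstyle{\rm inv}}:C_\bull(\J^\infty L)\to C_{\bull+1}(\J^\infty L)$ of \S \ref{invariants}, given by \eqref{invariant}, \emph{is} a morphism of cyclic modules (this fails for general Hopf algebroids but holds here); composing it with the classical morphism of mixed complexes $\phi^1\otimes\cdots\otimes\phi^{n+1}\mapsto\phi^{n+1}d\phi^1\wedge\cdots\wedge d\phi^n$, valid for any commutative algebra (cf.\ \cite{Lod:CH}), one finds exactly the map $F$ of the theorem. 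Thus $F$ intertwines $B$ with $d$ by known results, and the $HC$/$HP$ statements follow from the Hochschild isomorphism via the $SBI$ sequence, as you indicate. If you insist on the direct route, you must actually carry out the dual of \eqref{duoplast}; note also that the formula $B=N\gs_{-1}(1-\gl)$ you quote is the one on the cocyclic (cohomology) side, so even the starting point of your computation would need to be replaced by the Connes boundary built from the extra degeneracy of the cyclic module $C_\bull(\J^\infty L)$.
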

\begin{proof}
This statement is very much the dual of Theorem \ref{LRCyclCohom}. 
The dual of the PBW isomorphism gives $\J^\infty L\cong \hat{S}_AL^*$ as commutative algebras.
Similar to Lemma \ref{iso-jet} there is a 
canonical isomorphism 
\[
C_n(\J^\infty L)\cong \lim_{\longleftarrow\atop p}\Hom_A\left(\big(\V L^{\otimes^{ll}n}\big)_{(p)},A\right), 
\]
induced by the map 
\[
(\phi^1\otimes\cdots\otimes \phi^n)(D_1\otimes\cdots\otimes D_n)=S(\phi^1)(D_1)\cdots S(\phi^n)(D_n).
\]
Observe that $C_n(\J^\infty L)$ is defined here with respect to the
tensor product in the category  $\mathsf{Comod}_R(\J^\infty L)$, the dual of $\otimes ^{rr}$,
and the antipode is needed to go from (the duals of) $\V L^{\otimes^{rr}n}$ to 
$\V L^{\otimes^{ll}n}$, to make the map $F$ well-defined. Since
$\J^\infty L$ is a commutative algebra, it maps the Hochschild differential $b$ to zero.

Clearly, $F$ is a morphism of $A$-modules, where $A$ acts on
$C_\bull(\J^\infty L)$ by 
multiplication by $t(a),~a\in A$,
on the first component. We can therefore localise with respect to a
maximal ideal $\frakm\subset A$ 
to prove that $F$ is 
a quasi-isomorphism. As in the proof of Theorem \ref{LRCyclCohom},
$L_\frakm$ is 
free of rank $r$ over $A_\frakm$, and we choose a basis 
$e_i\in L_\frakm,~e^i\in L^*_\frakm,~i=1,\ldots,r$. The Koszul resolution
\[
0\longleftarrow A_\frakm\stackrel{\eps}{\longleftarrow}\J^\infty
L_\frakm\stackrel{\partial'}{\longleftarrow}
\J^\infty L_\frakm\otimes_{A_\frakm} L^*_\frakm
\stackrel{\partial'}{\longleftarrow}
\J^\infty L_\frakm\otimes_{A_\frakm}\textstyle\bigwedge^2_{A_\frakm} 
L^*_\frakm\stackrel{\partial'}{\longleftarrow}\ldots
\]
is a 
free resolution
of $A_\frakm$ in the category $\mathsf{Mod}(\J^\infty L_\frakm)$ with differential
\[
\partial'(\phi\otimes \omega)=\sum_{i=1}^r e^i\phi\otimes\iota_{e_i}\omega.
\]
The natural map $\J^\infty
L_\frakm\otimes_{A_\frakm}\textstyle\bigwedge^\bull_{A_\frakm}
L^*_\frakm\to {\rm Bar}_\bull(\J^\infty L_\frakm)$
given by 
\[
\phi\otimes\alpha_1\wedge\cdots\wedge \alpha_n:=\phi_0\otimes(\alpha_1\circ \pr)\wedge\cdots\wedge(\alpha_n\circ \pr), 
\]
is a morphism of complexes as one easily checks. Since $S(\alpha\circ \pr)=-\alpha\circ \pr$ for $\alpha\in L^*$, the
map $\id\otimes F_\frakm: {\rm Bar}_\bull(\J^\infty L_\frakm)\to\J^\infty
L_\frakm\otimes_{A_\frakm}\textstyle\bigwedge^\bull_{A_\frakm}L^*_\frakm$ 
is a right inverse
and induces the morphism $F$ when taking the tensor product
$A_\frakm\otimes_{\J^\infty L_\frakm} - $ on both sides. 
This proves the first claim.

As for the second, notice that 
one has $B_n(\J^\infty L)\cong C_{n+1}(\J^\infty L)$ 
since $\J^\infty L$ is commutative, and the 
map to invariants $\Psi_{\scriptscriptstyle{\rm inv}}\!:\!C_n(\J^\infty L)\!\to\! C_{n+1}(\J^\infty
L)$ of \S \ref{invariants} is a morphism of cyclic modules. 
Explicitly, this map, when restricted to $L^*$, is given by
\[
\begin{split}
&\Psi_{\scriptscriptstyle{\rm inv}}(\phi^1\otimes \cdots\otimes \phi^n)(X_1\otimes\cdots\otimes X_{n+1})=\\
&=S\phi_{(1)}^1(X_1)\cdots S\phi^n_{(1)}(X_n) \nabla^l_{X_{n+1}}(\phi^1_{(2)}\cdots\phi^n_{(2)})(1)\\
&=\sum_{i=1}^n\big(S\phi^1(X_1)\cdots\widehat{S\phi^i(X_i)}\cdots
S\phi^n(X_n)\big)S\phi_{(1)}^i(X_i)
\big( X_{n+1}(\phi^i_{(2)}(1))-\phi_{(2)}^i(X_{n+1})\big).
\end{split}
\] 
Since the cyclic structure on $C_{\bull+1}(\J^\infty L)$ depends
only on the structure of $\J^\infty L$ as a commutative algebra, 
it is well-known (see, for example, \cite{Lod:CH}) that the morphism
\[
\phi^1\otimes\cdots\otimes\phi^{n+1}\mapsto\phi^{n+1}d\phi^1\wedge\cdots\wedge d\phi^n
\]
induces a morphism of mixed complexes $\left(C_{\bull}(\J^\infty
L)[1],b,B\right)\to \big(\textstyle\bigwedge^\bull_A L^*,0,
d\big)$. 
Composing this morphism with $\Psi_{\scriptscriptstyle{\rm inv}}$ as above, one finds exactly the map 
stated in the theorem. This proves that it intertwines the
$B$-operator with the coboundary operator for Lie-Rinehart cohomology.
Since we already know that this map is a quasi-isomorphism on the
level of Hochschild homology, 
the $SBI$ sequence implies that it is a quasi-isomorphism for cyclic homology. This proves the theorem.
\end{proof}
\subsubsection{Lie groupoids}
Here we explain the relationship between the previous constructions
and so-called {\em formal Lie groupoids} \cite{Kar:FSGOADQ}, 
justifying the name jet spaces. Among others, it gives a natural explanation of the Hopf 
algebroid structure. 
  
Let $X\subset Y$ be a closed subset of a smooth manifold $Y$. Its {\em formal neighbourhood} is
the commutative ring
\[
\J^\infty_Y(X):=C^\infty(Y)\slash I^\infty_X,
\]
where $I_X$ denotes the ideal of functions vanishing on $X$, and $I^\infty_X=\bigcap I^{n+1}_X$. 
It has the following functorial property: let $f:(X_1,Y_1)\to(X_2,Y_2)$ be a smooth map from $Y_1$ to
$Y_2$ with the property that $f(X_1)\subset X_2$. This induces  a canonical morphism of rings
$f^*:\J^\infty_{Y_2}(X_2)\to\J^\infty_{Y_1}(X_1)$ by pull-back.

Consider the Lie-Rinehart algebra arising
 from a Lie algebroid $E(\G)$ of a Lie groupoid $s, t:\G\rightrightarrows M$:
this is the vector bundle over $M$ 
defined by the kernel of the derivative of the source map: $E(\G):=\ker(ds)|
_M$. The derivative of the target map, restricted to $M$, provides the
anchor, 
so that the space of sections of $E(\G)$ defines a Lie-Rinehart 
algebra over $A=C^\infty(M)$. Let $\calC^\infty_M$ denote the
structure sheaf of smooth functions on $M$, 
and define the following sheaf on 
$M$:
\begin{equation*}
\label{def-jet}
\J^n_\G:=s_*\left(\calC^\infty_{\G}\slash\mathcal{I}_M^{n+1}\right),
\end{equation*}
where $\mathcal{I}_M$ denotes the sheaf of smooth functions on $\G$
vanishing on $M$. 
This defines a sheaf of commutative algebras on 
$M$  which
has two natural inclusions $\calC^\infty_M\hookrightarrow\J^n_\G$
given by pull-back via $s$ or $t$. 
As above, $\J^\infty_\G$ 
denotes the projective limit of these sheaves. 
The pair $(M,\J^\infty_\G)$ is a locally ringed space, and the ring of global sections $\J^\infty_\G(M)$ 
is the formal neighbourhood of $M$ in $\G$ as defined above.
\begin{remark}
For the so-called {\em pair groupoid} $M\times M\rightrightarrows M$,
source and target map are given by the projection onto the
first resp.\ second component.
The associated Lie algebroid over $M$ is then nothing but the 
tangent bundle $TM$. Since the unit inclusion is just the diagonal
map, 
the definition above is the standard definition, cf.\ e.g.\ \cite[Ch.\ 1]{KumSpe:LEGT}, of 
the sheaf of jets of smooth functions on $M$.
\end{remark}
\begin{proposition}
There is a canonical isomorphism $\J^p_\G(M)\cong\J^p(E(\G))$.
\end{proposition}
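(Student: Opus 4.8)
The plan is to realise the isomorphism as a duality pairing between differential operators along the source fibres of $\G$ and jets of functions normal to the unit manifold $M$. First I would represent $\V L$ as an algebra of differential operators on $\G$. Every section $X\in L=\Gamma(E(\G))$ extends canonically to the right-invariant vector field $\overrightarrow{X}$ on $\G$; since right translations $R_g\colon s^{-1}(t(g))\to s^{-1}(s(g))$ preserve source fibres, $\overrightarrow{X}$ is $s$-vertical ($ds(\overrightarrow{X})=0$), while $dt(\overrightarrow{X})$ restricts along $M$ to the anchor $\rho$. Taking $\phi_A=t^*\colon A=\cinf M\to\cinf\G$ as a multiplication operator and $\phi_L\colon X\mapsto\overrightarrow{X}$, the relations $\overrightarrow{X}(t^*a)=t^*(\rho(X)a)$ and $\overrightarrow{aX}=(t^*a)\overrightarrow{X}$ hold, so the universal property of $\V L$ recalled above yields a filtered algebra homomorphism $\V L\to\{s\text{-vertical differential operators}\}$, $D\mapsto\tilde D$, sending $\V L_{\leq p}$ to operators of order $\leq p$. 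I then define, for $[f]\in\J^p_\G$ represented by $f\in\cinf\G$ near $M$,
\[
\Theta_p([f])\colon \V L_{\leq p}\to A,\qquad \Theta_p([f])(D)(x):=(\tilde D f)(1_x).
\]

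The routine verifications come next. If $f\in\mathcal{I}_M^{p+1}$ then $\tilde D(\mathcal{I}_M^{p+1})\subset\mathcal{I}_M$ whenever $\tilde D$ has order $\leq p$, so $(\tilde D f)|_M=0$ and $\Theta_p$ descends to the quotient $\J^p_\G$. Using $\widetilde{aD}=(t^*a)\tilde D$ together with $t(1_x)=x$ gives $\Theta_p([f])(aD)=a\,\Theta_p([f])(D)$, so $\Theta_p([f])\in\Hom_A(\V L_{\leq p},A)=\J^p(E(\G))$ for the left $A$-module structure (left multiplication) used in the definition of $\J^pL$. As the construction is local over $M$, $\Theta_p$ is a morphism of sheaves $\J^p_\G\to\J^p(E(\G))$ of $\calC^\infty_M$-modules, and it is manifestly canonical since the right-invariant extension involves no choice.

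Finally, and this is where the real work lies, I would prove $\Theta_p$ is an isomorphism by a filtered argument on associated graded, checked on stalks after localising at a maximal ideal $\frakm\subset A$ (as in the proofs of Theorems \ref{LRCyclCohom} and \ref{jet-hom}), where $L$ becomes free. The source side carries the decreasing filtration $\mathcal{I}_M^q/\mathcal{I}_M^{p+1}$ with $\operatorname{gr}^q=\mathcal{I}_M^q/\mathcal{I}_M^{q+1}\cong S^q_A(\mathcal{I}_M/\mathcal{I}_M^2)$; the target side carries the dual annihilator filtration, whose $\operatorname{gr}^q$ is $\Hom_A(\V L_{\leq q}/\V L_{\leq q-1},A)\cong\Hom_A(S^q_AL,A)$ by the Poincar\'e--Birkhoff--Witt theorem. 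The map $\Theta_p$ is filtered because an operator of order $\leq q-1$ sends $\mathcal{I}_M^q$ into $\mathcal{I}_M$. The crucial identification is $N:=T\G|_M/TM\cong\ker(ds)|_M=E(\G)=L$, the splitting being furnished by $du$; under it the conormal bundle $\mathcal{I}_M/\mathcal{I}_M^2\cong L^*$, and since $A\supset\Q$ one has $\Hom_A(S^q_AL,A)\cong S^q_AL^*$. The main obstacle is then to show that $\operatorname{gr}^q\Theta_p$ is the canonical perfect pairing $S^q_AL^*\xrightarrow{\ \cong\ }\Hom_A(S^q_AL,A)$: one matches the principal symbol of $\tilde D$ with the PBW class of $D$ in $S^q_AL$ and the leading term of $f$ with its class in $S^q_AL^*$, so that modulo lower filtration $(\tilde D f)|_M$ equals the natural contraction $S^q_AL\otimes S^q_AL^*\to A$. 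As this contraction is perfect for finitely generated projective $L$, each $\operatorname{gr}^q\Theta_p$ is an isomorphism, and since both filtrations are finite and exhaustive, $\Theta_p$ is an isomorphism. Sheafifying yields the asserted canonical isomorphism $\J^p_\G(M)\cong\J^p(E(\G))$.
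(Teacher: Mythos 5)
Your proof is correct and builds exactly the same canonical map as the paper: realise $\V L$ as ($s$-vertical, right-invariant) differential operators on $\G$ and pair a jet $[f]$ with $D$ by evaluating $\tilde D f$ along the units. Where you diverge is in how bijectivity is established. The paper chooses coordinates $(x,y)$ adapted to $M\subset\G$ and invokes Taylor expansion: sections of $\J^p_\G$ become polynomials of degree $\leq p$ in the normal variables $y$, elements of $\V L_{\leq p}$ become $\sum_{|\alpha|\leq p}c_\alpha(x)D^\alpha_y$, and the pairing is visibly the perfect duality between monomials and constant-coefficient normal derivatives. You instead run a coordinate-free filtered argument: the $\mathcal{I}_M$-adic filtration on jets against the annihilator (PBW) filtration on $\Hom_A(\V L_{\leq p},A)$, with graded pieces $S^q_AL^*$ and $\Hom_A(S^q_AL,A)$ matched by the symbol/leading-term contraction, which is perfect for finitely generated projective $L$ over $A\supset\Q$. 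These are two dressings of the same local duality --- your graded pieces are precisely what the paper's Taylor coefficients compute --- but your version buys something: it makes explicit the well-definedness of the pairing modulo $\mathcal{I}_M^{p+1}$ and the right-invariant extension of $\V L$ (which the paper compresses into ``view $D$ as a germ of a differential operator''), and it transfers verbatim to purely algebraic Lie-Rinehart settings, at the price of invoking PBW and localisation where the paper's coordinate computation is self-contained and shorter.
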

\begin{proof}
On $M$ there is a short exact sequence of vector bundles
\[
0 \longrightarrow E(\G) \longrightarrow T\G|_M\stackrel{ds}{\longrightarrow} TM\longrightarrow 0.
\]
There is therefore a canonical map 
$$
\J^p_\G(M)\to\J^p(E(\G)), \quad f \mapsto \{ D \mapsto D(f) \},
$$ 
where we view $D\in\V E(\G)_{\leq p}$ as a germ of a differential operator on $\G$ of order $\leq p$. 
This map is clearly left $\calC^\infty_M$-linear, so it indeed defines an element in $\J^p(E(\G))$.
Let $(x_1,\ldots,x_s,y_1,\ldots,y_r):U\to\R^{s+r}$ be local
coordinates on $U\subset\G$, 
where $(x_1,\ldots,x_s)$ are defined on $U\cap M$. 
For some $f\in\calC^\infty_\G(U)$ we have
by Taylor's expansion
\begin{equation}
\label{taylor}
f(x,y)=\sum_{|\alpha|\leq p}D_y^\alpha f(x,0)\frac{y^\alpha}{\alpha!}\quad\mbox{mod}\,\,\mathcal{I}_M^{p+1},
\end{equation}
where $\alpha=(\alpha_1,\ldots,\alpha_r)$ denotes a multiindex,
$|\alpha|=\sum_i\alpha_i$, 
$\alpha!=\alpha_1!\cdots\alpha_r!$, and 
$D^\alpha_y=\partial^{|\alpha|}\slash\partial y_1^{\alpha_1}\cdots
\partial y_r^{\alpha_r}$. 
This gives locally a representative of each local 
section of $\J^p_\G$ as a polynomial of degree $\leq p$ in the $y$-coordinates.
A general element $D\in\V L_{\leq p}$ can locally be written as 
\[
D=\sum_{|\alpha|\leq p}c_\alpha(x)D^\alpha_y,
\]
with $c_\alpha\in\calC^\infty_M(U)$, and this shows that the map
defined above is an isomorphism in each degree. 
Taking the projective limit proves the proposition.
\end{proof}
As remarked, the formal neighbourhood $\J^\infty_\G(M)$ comes equipped with two homomorphisms
$s,t:C^\infty(M)\to \J^\infty_\G(M)$ given by pull-back along the
groupoid source resp.\ target map. 
As a commutative algebra, it therefore inherits the structure of an
$(s,t)$-ring. 

\noindent Consider now the inclusion $(u,u)\!:\!M\hookrightarrow
\G_2$, and define the following sheaf on $M$:
\[
\J^\infty_{\G_2}:=\lim_{\underset{p}{\longleftarrow}} s_*\left(\calC^\infty_{\G_2}\slash \mathcal{I}_{M}^{p+1}\right),
\]
where $s:\G_2\to M$ is defined as $s(g_1,g_2)=s(g_1)$.
\begin{proposition}
There is a canonical isomorphism of sheaves
\[
\J^\infty_\G\otimes_{\calC^\infty_M}\J^\infty_\G\stackrel{\cong}{\longrightarrow}\J^\infty_{\G_2}
\]
by which the coproduct $\Delta$ is identified with the pull-back of the multiplication.
\end{proposition}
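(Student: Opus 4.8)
The plan is to realise the isomorphism by the jet-level ``external product followed by restriction'' map familiar from the \'etale case \rmref{iso-gpd}: on representatives one sends $f_1\otimes f_2$ to the function $(g_1,g_2)\mapsto f_1(g_1)f_2(g_2)$ on $\G_2\subset\G\times\G$. First I would check that this descends. Since the $f_i$ are germs along $M$ and $\mathcal{I}_M^{p+1}$ is generated by functions vanishing to order $p+1$ on $M$, the external product respects the filtrations and induces a map on the quotients $\calC^\infty_\G/\mathcal{I}_M^{p+1}$, hence on the projective limits. The key point is the balancing over $\calC^\infty_M$: the two $\calC^\infty_M$-actions on $\J^\infty_\G$ are multiplication by $s^\ast(a)$ and $t^\ast(a)$, and for $(g_1,g_2)\in\G_2$ one has $s(g_1)=t(g_2)$, so $\big(f_1\cdot s^\ast(a)\big)(g_1)f_2(g_2)=f_1(g_1)\,a(s(g_1))\,f_2(g_2)=f_1(g_1)\,a(t(g_2))\,f_2(g_2)=f_1(g_1)\big(t^\ast(a)\cdot f_2\big)(g_2)$. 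Thus it is precisely the composability condition that forces the tensor product to be taken over $\calC^\infty_M$, matching the $\otimes^{rl}$-balancing of Lemma~\ref{iso-jet}.

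Next I would prove bijectivity, working locally and mimicking the proof of the preceding proposition. Near a unit $1_x$ the short exact sequence $0\to E(\G)\to T\G|_M\to TM\to 0$ provides transverse ``fibre'' coordinates along the source fibres with $M$ as the zero locus. Choosing $g_2$ freely and then $g_1$ in the source fibre over $t(g_2)$ exhibits the normal bundle of $M$ in $\G_2$ as $E(\G)\oplus E(\G)$, i.e.\ $\G_2$ acquires transverse coordinates $(z,w)$, where $z$ records the fibre position of $g_2$ and $w$ that of $g_1$. Taylor expansion \rmref{taylor} in the $2r$ variables $(z,w)$ then identifies $\calC^\infty_{\G_2}/\mathcal{I}_M^{p+1}$ with polynomials of degree $\le p$ in $(z,w)$ over the base, and passing to the limit identifies $\J^\infty_{\G_2}$ with the completed tensor product (in the PBW filtration, cf.\ \cite{Qui:RHT}) of the power series in $w$ with those in $z$. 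Under the map the first factor $\J^\infty_\G$ accounts for the $w$-series and the second for the $z$-series, so it is a filtered isomorphism in each degree; alternatively, this is the geometric incarnation of Lemma~\ref{iso-jet} combined with $\J^\infty_\G\cong\J^\infty L$.

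It remains to identify the coproduct with $m^\ast$, where $m:\G_2\to\G$ is groupoid multiplication; since $m(1_x,1_x)=1_x$ it preserves $M$ and induces $m^\ast:\J^\infty_\G\to\J^\infty_{\G_2}\cong\J^\infty_\G\otimes_{\calC^\infty_M}\J^\infty_\G$. Here I would use the realisation, already invoked in the proof of the preceding proposition, of $\V L_{\le p}$ as germs of differential operators on $\G$ along the units, under which the algebra multiplication of $\V L$ is composition of operators. The crucial compatibility is that differentiating $\phi(g_1g_2)$ by $D$ in the $g_1$-slot and by $E$ in the $g_2$-slot and restricting to $M$ yields $(DE)(\phi)$; in other words the groupoid multiplication induces on jets exactly the map dual to $m_{\V L}:\V L\otimes^{rl}\V L\to\V L$, $D\otimes E\mapsto DE$. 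Since by its very definition $\Delta$ is also dual to $m_{\V L}$ (recall $\phi(DE)=\phi_{(1)}(D\,\phi_{(2)}(E))$), duality gives that the isomorphism intertwines $m^\ast$ with $\Delta$.

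The main obstacle is the local step: one must set up coordinates on $\G_2$ adapted simultaneously to the diagonal unit embedding $(u,u)$ and to the source projection $s(g_1,g_2)=s(g_1)=t(g_2)$ used in the push-forward, and verify that the two blocks of transverse coordinates split as independent copies of $E(\G)$ matching the two tensor factors. The subtlety is that the base of the push-forward, $t(g_2)$, differs from the naive coordinate $s(g_2)$ away from $M$; one checks that modulo $\mathcal{I}_M^{p+1}$ this discrepancy is absorbed by a formal change of coordinates, so that the completed tensor decomposition, and with it the bijectivity, holds.
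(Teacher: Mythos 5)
Your proposal is correct and takes essentially the same route as the paper: the same stalkwise external-product map, the same verification that it factors through the ideal generated by $s^*\calC^\infty_M\otimes 1-1\otimes t^*\calC^\infty_M$ (forced by the composability condition $s(g_1)=t(g_2)$ on $\G_2$), and bijectivity via Taylor expansion after identifying the normal bundle of $M$ in $\G_2$ with two copies of $E(\G)$ -- which the paper packages in the exact sequence $0\to E(\G_2)\to T\G_2\to TM\to 0$ rather than your explicit parametrization, but with identical content. You in fact go beyond the paper's proof, which never addresses the identification of $\Delta$ with $m^*$ at all; your duality argument there is the right idea, though making ``$E$ in the $g_2$-slot'' precise on $\G_2$ (and getting $DE$ rather than $ED$) requires the left/right-invariance bookkeeping you only gesture at.
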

\begin{proof}
Define the morphism of sheaves as follows:
let $f_1$ and $f_2$ be local sections of $\J^\infty_\G$. Define the
local section $f$ 
of $\J^\infty_{\G_2}$ stalkwise by
\[
[f]_{(g_1,g_2)}:=[f_1]_{g_1}[f_2]_{g_2}.
\]
Clearly, this morphism factors over the ideal generated by 
$(s^*\calC^\infty_M\otimes 1-1\otimes t^*\calC^\infty_M)$
in $\J^\infty_\G\otimes \J^\infty_\G$ defining the tensor product and is therefore well-defined.
With respect to $\G_2$, there is a short exact sequence of vector bundles over $M$
\[
0\longrightarrow E(\G_2)\longrightarrow T\G_2\stackrel{ds}{\longrightarrow} TM\longrightarrow 0,
\]
where $E(\G_2)$ is the vector bundle with fiber at $x\in M$ given by
\[
E(\G_2)_x=\{(X,Y)\in E(\G)_x\oplus E(\G)_x \mid  dt(X)=ds(Y)\}.
\]
It then follows from \eqref{taylor} that the map defined above is an isomorphism.
\end{proof}
Next, we turn to the antipode, given by the dual
of the
groupoid inversion
map, $S:=i^*$. 
Notice that on the level of sheaves  $i:\G\to \G$ induces a morphism
\[
i^*:\J^\infty_\G\to t_*\left(\calC^\infty_{\G}\slash\mathcal{I}_M^{\infty}\right),
\]
but on the level of global sections it defines a homomorphism 
$S:\J^\infty_\G(M)\to\J^\infty_\G(M)$ satisfying $S(s^*f_1\phi
t^*f_2)=s^*f_2S(\phi)t^*f_1$ 
for all $\phi\in\J^\infty_\G(M)$ 
and $f_1,f_2\in C^\infty(M)$. With this antipode, it is easy to check
that all Hopf algebroid axioms in 
Definition \ref{def-hopf-algbd} are satisfied by the fact that $\mathsf{\G}\rightrightarrows M$ is a Lie groupoid.
\begin{remark}
It is clear from the construction above that not the full 
groupoid $\G\rightrightarrows M$ is needed, but rather
its structure in a neighbourhood of $M$ in $\G$. 
Such an object is called a \textit{local groupoid}. Although for a general
Lie algebroid there may be obstructions to integrate to a Lie
groupoid \cite{CraFer:IOLB}, 
one can always find an integrating  
local Lie groupoid, see Cor.\ 5.1 of [{\em loc.\ cit.}]. 
The previous construction gives therefore an alternative proof of Theorem \ref{jet-ha}
for Lie algebroids.
\end{remark}

\begin{remark}[The van Est isomorphism]
Let $G$ be a compact Lie group with Lie algebra $\frakg$. One may
consider $G$ as a Lie groupoid with only one object, 
the unit, and the previous construction defines a Hopf
algebra of jets of functions on $G$ at the unit. 
In this case, $\V\frakg=\mathscr{U}\frakg$, the universal enveloping algebra of $\frakg$.
Therefore $\J^\infty\frakg=\hat{S}\frakg^*$, and the preceding theorem gives
\[
HC_\bull(\J^\infty(\frakg))\cong H^\bull_{\scriptscriptstyle{\rm Lie}}(\frakg,\R).
\]
On the other hand, $C^\infty(G)$ has a Hopf algebra structure by
dualising the structure maps of $G$, 
provided one uses the projective tensor product $\hat{\otimes}$ and its
property
$C^\infty(G) \hat{\otimes} C^\infty(G)\cong C^\infty(G\times G)$, cf.\ \cite{Get:TECCFNCLG}.
For this Hopf algebra one has
\[
HC_\bull(C^\infty(G))\cong {\textstyle{\bigoplus\limits_{i\geq 0}}}H^{\bull-2i}_{\scriptscriptstyle{\rm diff}}(G,\R).
\]
There is an obvious morphism $C^\infty(G)\to\J^\infty(\frakg)$ of Hopf algebras by taking the jet 
of a function at the unit. On the level of cyclic homology, this
induces a map 
$H^\bull_{\scriptscriptstyle{\rm diff}}(G,\R)
\to H^\bull_{\scriptscriptstyle{\rm Lie}}(\frakg,\R)$, which is the van Est map.
\end{remark}
\begin{example}[The coordinate ring of an affine variety] Let $A$ be the coordinate ring of
an affine variety $X$. For the Lie-Rinehart algebra $(A, \Der_k A )$ we have
\[
\J^\infty(\Der_k(A))=\lim_{{\longleftarrow}\atop{p}}\left(A\otimes A\slash\mathfrak{m}^{p+1}\right),
\]
where $\mathfrak{m}\subset A\otimes A$ is the kernel ideal of the multiplication. We can consider this 
Hopf algebroid to be the localisation of the enveloping algebra $\Ae$---viz.\ the pair groupoid---of  \S \ref{pg} 
to the diagonal $X\subset X\times X$. By Theorem \ref{jet-hom} we have
\[
\begin{split}
HH_\bull(\J^\infty(X))&\cong \Omega^\bull X,\\
HP_\bull(\J^\infty(X))&\cong \prod_{i\geq 0}H^{\bull+2i}_{\rm alg}(X).
\end{split}
\]
Since $A$ is commutative, we also have $HC_\bull(A)\cong H^\bull_{\rm alg}(X)$. In view of 
Proposition \ref{pg-cycl}, compare this with the van Est isomorphism of the previous remark.
\end{example}

\bibliographystyle{alpha}

\end{document}